%% file: main.tex
\documentclass[journal,twoside,web]{ieeecolor}

\usepackage{mathdots}

\let\labelindent\relax
\usepackage[tbtags]{amsmath}
\usepackage{enumitem}
\usepackage{generic}
\usepackage{comment}
\usepackage{mathtools}
\mathtoolsset{showonlyrefs}
\usepackage{tabularx} 
\usepackage{diagbox}
\usepackage{graphicx}
\usepackage{mathrsfs}
\usepackage[noadjust]{cite}
\usepackage{amssymb,amsfonts}
\usepackage{algorithmic}
\usepackage{tikz}
\usetikzlibrary{shapes,arrows,patterns,positioning}
\tikzstyle{block} = [draw, fill=gray!20, rectangle, 
    minimum height=2em, minimum width=4em]
\tikzstyle{sum} = [draw, fill=gray!20, circle, node distance=1.5cm]
\tikzstyle{input} = [coordinate]
\tikzstyle{output} = [coordinate]
\tikzstyle{pinstyle} = [pin edge={to-,thin,black}]

\DeclareMathOperator{\rank}{rank}
\DeclareMathOperator{\tr}{tr}

\DeclareMathOperator{\rad}{rad}
\DeclareMathOperator{\cent}{cent}

\DeclareMathOperator{\inter}{int}

\DeclareMathOperator{\spec}{spec}

\newcommand{\set}[2]{\left\{#1 \mid #2\right\}}
\newcommand{\norm}[1]{\left\|#1\right\|}

\usepackage{float}
\usepackage{accents}
\usepackage{multicol}
\usepackage{multirow}
\usepackage[ruled,vlined]{algorithm2e}

\usepackage{array,multirow}
\usepackage{hhline}
\usepackage{arydshln}

\newtheorem{theorem}{Theorem}
\newtheorem{corollary}[theorem]{Corollary}
\newtheorem{lemma}[theorem]{Lemma}
\newtheorem{example}[theorem]{Example}

\newtheorem{proposition}[theorem]{Proposition}

\newtheorem{remark}[theorem]{Remark}

\newtheorem{definition}[theorem]{Definition}

\usepackage{textcomp}
\def\BibTeX{{\rm B\kern-.05em{\sc i\kern-.025em b}\kern-.08em
    T\kern-.1667em\lower.7ex\hbox{E}\kern-.125emX}}
\begin{document}
\title{Experiment Design for Set-membership Identification: From Prior Knowledge to Universal Inputs}
\author{Amir Shakouri, 
Henk J. van Waarde, M. Kanat Camlibel
\thanks{The work of Henk van Waarde was supported by
the Dutch Research Council under the NWO Talent Programme Veni
Agreement (VI.Veni.22.335).}
\thanks{The authors are with the Bernoulli Institute for Mathematics, Computer Science and Artificial Intelligence, University of Groningen (e-mail: a.shakouri@rug.nl, h.j.van.waarde@rug.nl, m.k.camlibel@rug.nl). }
}

\maketitle

\thispagestyle{empty}
\pagestyle{empty}

\begin{abstract}
We consider the problem of designing input signals for an unknown linear time-invariant system in such a way that the resulting data, within a finite horizon, is suitable for identification with a desired accuracy. We consider both noise-free and noisy settings with $\ell_\infty$--bounded noise models. We will take into account general prior knowledge of the system parameters. Central in our study is the concept of \emph{universal inputs}. An input is called universal for identification if, when applied to any system complying with the prior knowledge, it yields data suitable for accurate identification. We provide new methods for designing such universal inputs. Our results generalize the experiment design approach based on Willems et al.'s fundamental lemma that relies on persistently exciting inputs, and that is limited to prior knowledge on controllability. It turns out that for other types of prior knowledge, there exist universal inputs that outperform the persistently exciting ones, e.g., in terms of sample efficiency. Moreover, we investigate types of prior knowledge that enable experiment design for \emph{exact} identification in the presence of noise.
\end{abstract}

\begin{IEEEkeywords}
Experiment design, universal inputs, system identification, prior knowledge.
\end{IEEEkeywords}

\input{introduction}

\input{preliminaries}




\input{UED}

\input{UED_noisy}

\input{i-o}



\input{examples}

\input{appendix}

\section*{References}

\bibliographystyle{IEEEtran}
\bibliography{biblo}

\begin{IEEEbiography}[{\includegraphics[width=1in,height=1.25in,clip,keepaspectratio]{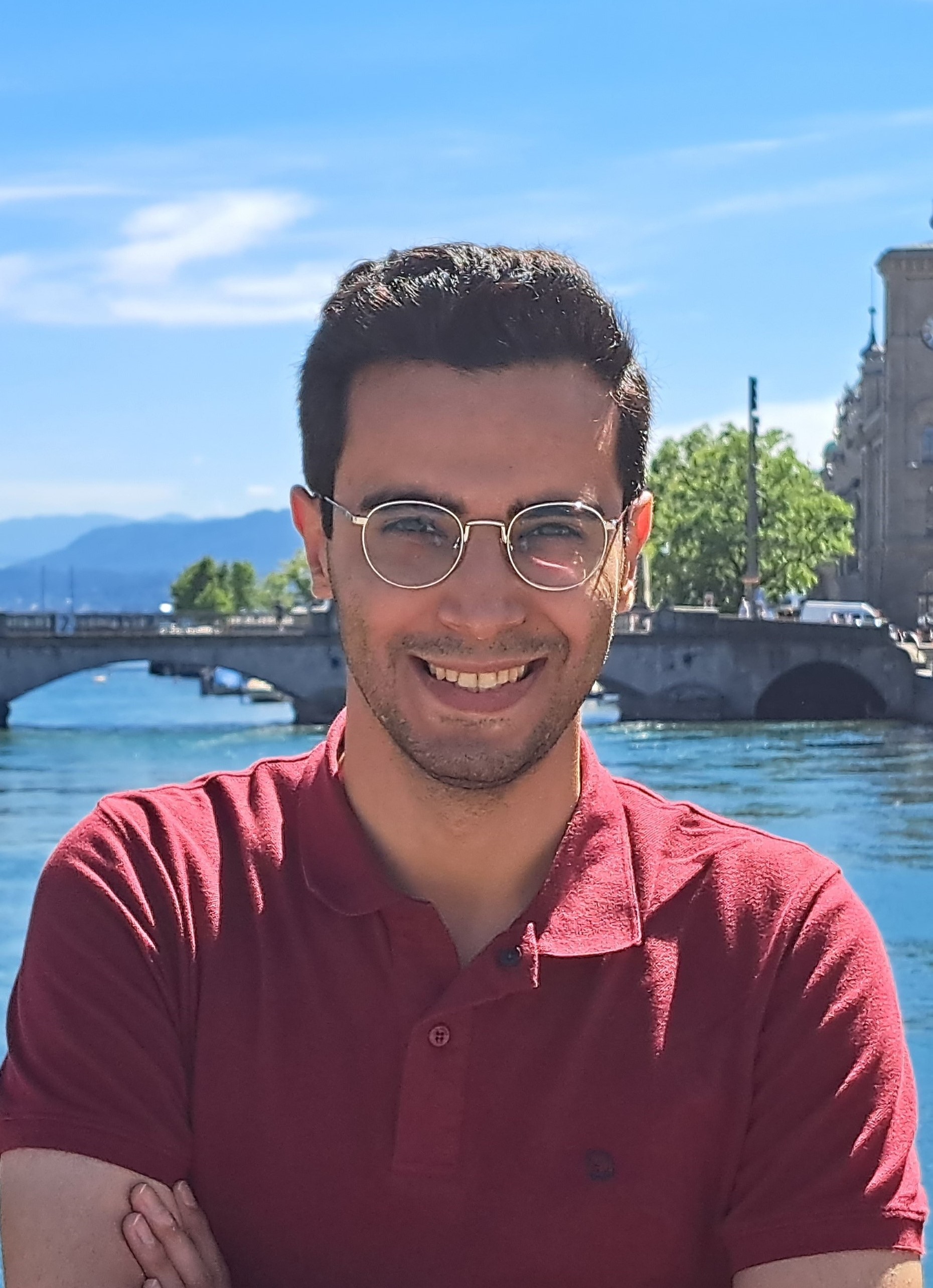}}]{Amir Shakouri} is currently pursuing the Ph.D. degree in applied mathematics with the Systems, Control, and Optimization Group, Bernoulli Institute for Mathematics, Computer Science and Artificial Intelligence, University of Groningen, Groningen, The Netherlands. His research interests include robust control, data-driven control, and experiment design. He is a recipient of the 2025 CDC Outstanding Student Paper Award.
\end{IEEEbiography}

\begin{IEEEbiography}[{\includegraphics[width=1in,height=1.25in,clip,keepaspectratio]{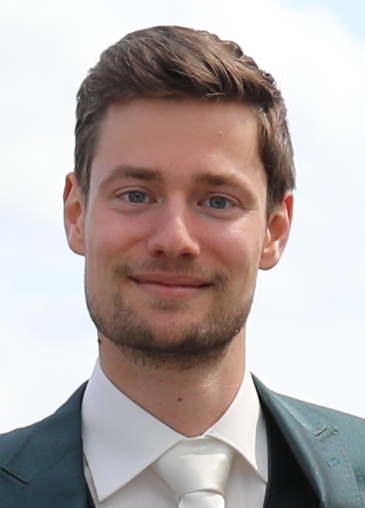}}]{Henk J. van Waarde} is an assistant professor in the Bernoulli Institute for Mathematics, Computer Science and Artificial Intelligence at the University of Groningen, The Netherlands. During 2020-2021 he was a postdoctoral researcher, first at the University of Cambridge, UK, and later at ETH Zurich, Switzerland. He obtained the Ph.D. degree cum laude in Applied Mathematics from the University of Groningen in 2020. He was also a visiting researcher at the University of Washington, Seattle in 2019-2020. 

His research interests include learning and data-driven control, system identification and identifiability, networks of dynamical systems, and robust and optimal control. Dr. van Waarde is the recipient of the 2025 SIAM Activity Group on Control and Systems Theory Prize. He serves as an Associate Editor of the IEEE Control Systems Letters.
\end{IEEEbiography}

\begin{IEEEbiography}[{\includegraphics[width=1in,height=1.25in,clip,keepaspectratio]{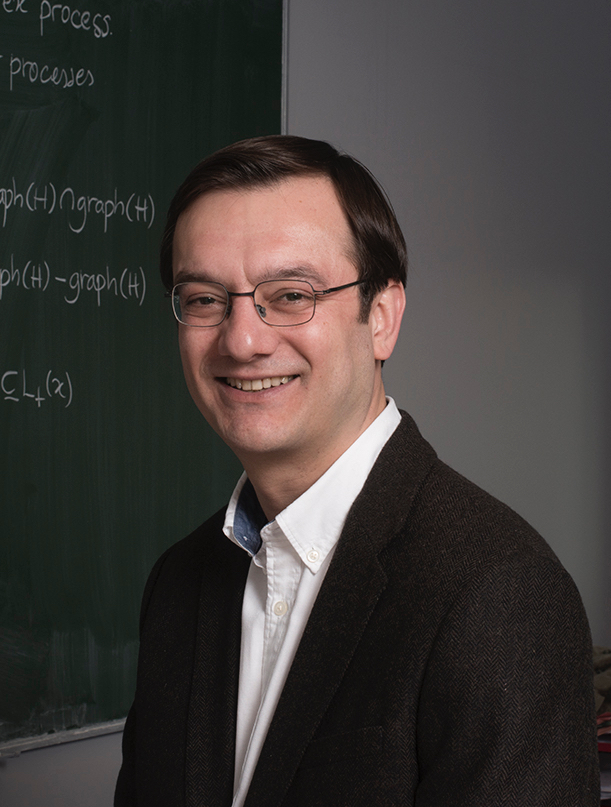}}]{M. Kanat Camlibel} is a Professor of Applied Mathematics at the University of Groningen, where he chairs the research group Systems, Control, and Optimization. He obtained his Ph.D. degree from Tilburg University (2001) and both Bachelor's and Master's degrees in Control and Computer Engineering from Istanbul Technical University (1991 and 1994). Prior to his current position, he served as an Assistant Professor at Eindhoven University of Technology and held postdoctoral positions at the University of Groningen and Tilburg University.

He has served as an Associate Editor for the International Journal of Robust and Nonlinear Control, Systems \& Control Letters, SIAM Journal on Control and Optimization, and IEEE Transactions on Automatic Control. His research interests encompass a wide range of areas, including complementarity systems, piecewise affine dynamical systems, switched linear systems, constrained linear systems, multi-agent systems, hybrid systems, dynamical networks, model reduction, geometric theory of linear systems, and data-driven control.
\end{IEEEbiography}

\end{document}

%% file: introduction.tex
\section{Introduction}
\label{sec:introduction}

\IEEEPARstart{S}{ystem} identification deals with the problem of constructing mathematical models of dynamical systems using measured input-output data and prior knowledge of the system (see, e.g., \cite{goodwin1977dynamic,ljung1998system,van2012subspace,milanese2013bounding,pintelon2012system,DBLSCT2025}). Obtaining an accurate model using system identification requires a sufficiently informative dataset. Therefore, an important problem is that of \emph{experiment design}, which deals with the question of how to select the input signal in such a way that the resulting data is informative. Answers to this question rely on the given prior knowledge of the system \cite[Ch. 6]{goodwin1977dynamic}. Two main approaches are commonly considered in experiment design: \emph{online} methods, in which the input at each time step is chosen based on data collected in the previous steps \cite{borchers2011set,karimshoushtari2020design,van2021beyond,camlibel2025shortest} (also see \cite[Sec. 11.3]{hjalmarsson2005experiment} and the references therein); \emph{offline} methods, in which the entire input signal is designed \emph{a priori} and applied without modification during the experiment \cite{bai1985persistency,green1986persistence,willems2005note,coulson2022quantitative,berberich2023quantitative,rojas2007robust,gevers1986optimal,deistler1995consistency,weyer2002finite,campi2002finite,venkatasubramanian2025beyond,pronzato1989experiment,borchers2011design,dahleh1993sample,poolla1994time}.

In this work, we propose a new framework for studying the offline experiment design problem using general prior knowledge of the system. We consider both the noise-free scenario, where we aim at exact system identification, and the noisy setting, where we aim at system identification up to a prescribed accuracy. The central concept in our study is the notion of \emph{universal inputs}. An input is called universal if it leads to informative data when applied to \emph{any} system compatible with the prior knowledge. In this paper, we provide novel conditions under which an input is universal across a wide range of different types of prior knowledge. As we will demonstrate, there are many instances of prior knowledge for which our methods outperform existing approaches based on persistency of excitation \cite{bai1985persistency,green1986persistence,willems2005note,coulson2022quantitative,berberich2023quantitative}, e.g., in terms of sample efficiency. This is particularly important for cases where applying persistently exciting inputs is not practical due to safety considerations, physical constraints, or limited resources, as encountered in chemical/biological systems \cite{sontag2003differential}, clinical settings \cite{engelhart2016comparison}, large-scale networks \cite{tsiamis2023statistical}, and aerospace systems \cite{prussing1970optimal}.

\subsection{Related work}

Linear system identification has been studied within different approaches depending on the system class and the noise model. Prediction error methods \cite{ljung1998system}, subspace identification \cite{van2012subspace}, identification in the frequency domain \cite{pintelon2012system}, and set-membership identification \cite{milanese2013bounding} are among the most popular approaches. Of particular relevance to this paper is set-membership identification, where the focus is on bounded deterministic noise models, and where the aim is to obtain a model with the smallest \emph{worst-case error} (see, e.g., \cite{milanese1991optimal,li2024learning,shakouri2025chebyshev,lauricella2020set} and the references therein). 

Several types of prior knowledge have been incorporated into system identification. For instance, bounds on the system parameters have been considered, e.g., in \cite{rojas2007robust} (see also \cite[Sec. 2]{hjalmarsson2005experiment}). System-theoretic properties such as minimality \cite{verhaegen1992subspace}, stability \cite{van2002identification,lacy2003subspace,miller2013subspace}, positivity \cite{de2002identification}, and passivity \cite{goethals2003identification,rodrigues2021novel} are also among the investigated types of prior knowledge. Moreover, incorporating information on the system's transfer function has been studied in \cite{inoue2019subspace,khosravi2023kernel}. 

Experiment design has also been investigated from various perspectives. For stochastic noise models, asymptotic methods have been studied, e.g., in \cite{rojas2007robust,gevers1986optimal,deistler1995consistency}, where the goal is to find infinite-horizon input signals such that the identification error decreases as the number of samples tends to infinity. Nonasymptotic methods, on the other hand, aim to guarantee a desired identification accuracy using \emph{finite} data. In the stochastic setting, such methods have been studied, e.g., in \cite{weyer2002finite,campi2002finite,venkatasubramanian2025beyond}. 

Unlike the previously mentioned works \cite{rojas2007robust,gevers1986optimal,deistler1995consistency,weyer2002finite,campi2002finite,venkatasubramanian2025beyond}, in this paper, we focus on bounded deterministic noise models. In this setting, asymptotic approaches using online \cite{karimshoushtari2020design,borchers2011set} and offline \cite{pronzato1989experiment,borchers2011design} methods are among the investigated topics. Literature on nonasymptotic experiment design for set-membership identification with provable guarantees on the identification accuracy and sample complexity is so far limited only to finite impulse response (FIR) systems \cite{dahleh1993sample,poolla1994time}. This problem is deemed to be challenging in general and has not been thoroughly studied in the literature. It was not until recently that important steps towards solving this problem have been taken in \cite{coulson2022quantitative} (also see \cite{berberich2023quantitative} and \cite[Sec. 4.5]{shakouri2025chebyshev}), where a robust version of Willems et al.'s fundamental lemma is developed. However, the framework in \cite{coulson2022quantitative} is confined to controllability and does not accommodate other forms of prior knowledge.

\subsection{Contributions}

In this paper, we study offline experiment design for set-membership identification with guarantees on the identification accuracy and the number of data samples. We investigate the role of prior knowledge in this problem. We first consider the noise-free setting, and then we extend our results to the noisy case with $\ell_\infty$--bounded noise models. Our contributions are summarized as follows:
\begin{enumerate}[wide]
    \item A framework is developed for studying the experiment design problem, leveraging the concepts of data informativity and input universality by taking prior knowledge into account.
    \item We show that every universal input must be persistently exciting of a sufficiently high order in the following cases: single-input systems with open sets of prior knowledge (Theorem~\ref{th:single-input-noiseless}); and multi-input systems with the prior knowledge in the form of a norm bound on the input matrix (Theorem~\ref{th:PE_nec_ball}). 
    \item We study cases where the prior knowledge allows for designing universal inputs that are not necessarily persistently exciting (see Theorems~\ref{th:GM_noiseless} and~\ref{th:GM_noiseless_generalized}). For instance, our results enable universal experiment design using hands-off input signals that take zero value over a significant interval of the experiment horizon (see Section~\ref{subsec:ex1}). 
    \item In the noisy setting, we study experiment design for identification with a desired accuracy for which several design methods are provided (see Theorems~\ref{th:open_design_noisy},~\ref{th:open_enabling_noisy}, and~\ref{th:helping_in_exp}). For instance, our results enable experiment design with fewer data samples compared to available methods based on persistency of excitation in \cite{willems2005note,coulson2022quantitative} (see Section~\ref{sub:network}).
    \item We further study experiment design for \emph{exact} identification in the presence of noise. We show that this problem is feasible only if the prior knowledge set is \emph{uniformly discrete} (see Theorems~\ref{th:GM_noisy_generalized}). For such sets of prior knowledge, we present an experiment design method for exact identification (see Theorem~\ref{th:ctrl_nec_noisy_unique}). 
\end{enumerate}

\subsection{Paper organization}

This paper is organized as follows: Section~\ref{sec:pre} provides the preliminaries. In Section~\ref{sec:prob}, we formulate the problem of universal experiment design. In Section~\ref{sec:UED_noisefree}, we study the experiment design using input-state data in the noise-free setting. We investigate the noisy setting in Section~\ref{sec:i-o gain}. We discuss universal experiment design based on input-output data in Section~\ref{sec:i-o data}. Two benchmark examples are considered in Section~\ref{sec:ex}, and finally, Section~\ref{sec:conclusions} concludes the paper.


%% file: preliminaries.tex
\section{Preliminaries}
\label{sec:pre}

\subsection{Notation}

An integer interval between $a\in\mathbb{Z}$ and $b\in\mathbb{Z}$ with $b\geq a$ is denoted by $[a,b]\coloneqq\set{x\in\mathbb{Z}}{a\leq x\leq b}$. We denote the set of nonnegative integers by $\mathbb{Z}_+$. The $p$-norm of a vector $v\in\mathbb{R}^n$ is denoted by $\norm{v}_p$, and in particular, the Euclidean $2$-norm is denoted by $\norm{v}$. 

For a matrix \mbox{$M\in\mathbb{R}^{n\times m}$}, we denote its singular values by \mbox{$\sigma_1(M)\geq\cdots\geq \sigma_{\min\{m,n\}}(M)$}. Moreover, $\sigma_i(M)=0$ if \mbox{$i>\min\{m,n\}$}. Let $\sigma_*(M)$ denote the smallest positive singular value of $M$ if $M\neq 0$, and $\sigma_*(M)=0$ if $M=0$. The spectral norm of a matrix $M\in\mathbb{R}^{n\times m}$ is denoted by $\norm{M}=\sigma_1(M)$. If $M\in\mathbb{R}^{n\times n}$ is symmetric and all its eigenvalues are positive (resp., nonnegative), we say that $M$ is positive definite (resp., positive semi-definite) and we denote it by $M>0$ (resp., $M\geq 0$). We say $M$ is negative definite (resp., negative semi-definite) and we denote it by $M<0$ (resp., $M\leq 0$) if \mbox{$-M>0$} (resp., $-M\geq 0$). We denote the Moore–Penrose pseudoinverse of $M\in\mathbb{R}^{n\times m}$ by $M^\dagger$. The Kronecker product of two matrices $M$ and $N$ is denoted by $M\otimes N$. 

Given $T\in\mathbb{Z}_+$ and $v:[0,T-1]\rightarrow\mathbb{R}^q$, we define
\begin{equation}
v_{[0,T-1]}\coloneqq\begin{bmatrix}
v(0)^\top & v(1)^\top & \cdots & v(T-1)^\top
\end{bmatrix}^\top.
\end{equation}
For $v_{[0,T-1]}\in\mathbb{R}^{qT}$, its \emph{Hankel matrix} of depth $k\in[1,T]$ is denoted by
\begin{equation}
\mathcal{H}_k(v_{[0,T-1]})\coloneqq \begin{bmatrix}
v_{[0,k-1]} & v_{[1,k]} & \cdots & v_{[T-k,T-1]}
\end{bmatrix}.
\end{equation}
We say that $v_{[0,T-1]}$ is \emph{persistently exciting of order $k$} if $\mathcal{H}_k(v_{[0,T-1]})$ has full row rank.

\subsection{Chebyshev centers and radii}

Let $\mathcal{X}\subset\mathbb{R}^{p\times q}$ be nonempty and bounded. We define 
\begin{equation}
\label{eq:cheb}
\rad \mathcal{X}\coloneqq \inf_{C\in\mathcal{X}} \sup_{X\in\mathcal{X}} \norm{C-X}
\end{equation}
as the \emph{Chebyshev radius} of $\mathcal{X}$ and
\begin{equation}
\label{eq:cent}
\cent\mathcal{X}\coloneqq\set{C\in\mathcal{X}}{\norm{C-X}\leq\rad\mathcal{X} \textup{ for all }X\in\mathcal{X}}
\end{equation}
as the \emph{set of Chebyshev centers} of $\mathcal{X}$. The Chebyshev radius is the smallest radius of a ball containing a set. The center of such a ball is called a Chebyshev center. Due to boundedness of $\mathcal{X}$, $\rad\mathcal{X}$ is well-defined and $\cent\mathcal{X}$ is nonempty. 

For a bounded set of matrix pairs $\Sigma_s\subset\mathbb{R}^{n\times n}\times \mathbb{R}^{n\times m}$, let $\mathcal{S}\coloneqq\set{\begin{bmatrix}
A \!\!&\!\! B
\end{bmatrix}\in\mathbb{R}^{n\times(n+m)}}{(A,B)\in\Sigma_s}$. We define $\rad\Sigma_s\coloneqq\rad\mathcal{S}$ and
\begin{equation}
\cent\Sigma_s\coloneqq\set{(A,B)\in\Sigma_s}{\begin{bmatrix}
A \!\!&\!\! B
\end{bmatrix}\in\cent\mathcal{S}}.
\end{equation}

\subsection{System class}

Let $n,m\in\mathbb{N}$ and $\varepsilon\geq 0$. Consider the class of LTI systems
\begin{equation}
\label{eq:1}
x(t+1)=Ax(t)+Bu(t)+w(t),
\end{equation}
where $x(t)\in\mathbb{R}^n$ is the state, $u(t)\in\mathbb{R}^{m}$ is the input, and $w(t)\in\mathbb{R}^n$ is the process noise satisfying 
\begin{equation}
\label{eq:ass1}
\norm{w(t)}\leq\varepsilon\ \text{ for all }\ t\in\mathbb{Z}_+.
\end{equation}
We identify this class of systems with the set \mbox{$\Sigma\coloneqq\mathbb{R}^{n\times n}\times\mathbb{R}^{n\times m}$}, and we refer to the specific system \eqref{eq:1} and \eqref{eq:ass1} as $(A,B)\in\Sigma$. 

Given $(A,B)\in\Sigma$, we define the \emph{input-state behavior} of \eqref{eq:1} and \eqref{eq:ass1} as
\begin{equation}
\begin{split}
\mathfrak{B}(A,B)\!\coloneqq\!\{(u,x)\!:\!\mathbb{Z}_+\!\!\rightarrow\!\mathbb{R}^{m}\!\times\! \mathbb{R}^{n}\mid \textup{there exists }w\!:\!\mathbb{Z}_+\!\rightarrow\!\mathbb{R}^{n}\\
\textup{such that }\eqref{eq:1}\textup{ and }\eqref{eq:ass1}\textup{ hold}\}.
\end{split}
\end{equation}
In addition, we define the \emph{$k$-restricted input-state behavior} 
\begin{equation}
\mathfrak{B}_k(A,B)\coloneqq\set{(u_{[0,k-1]},x_{[0,k]})}{(u,x)\in\mathfrak{B}(A,B)}.
\end{equation}

Input-state data collected from system \eqref{eq:1} within time horizon $T\in\mathbb{N}$ is denoted by
\begin{equation}
\label{eq:data}
\mathcal{D}\coloneqq(u_{[0,T-1]},x_{[0,T]}).
\end{equation}
Given $\mathcal{D}\in\mathbb{R}^{mT}\times\mathbb{R}^{n(T+1)}$, we call a pair of real matrices $(A,B)\in\Sigma$ a \emph{data-consistent system} if $\mathcal{D}\in\mathfrak{B}_T(A,B)$. We define the set of data-consistent systems as
\begin{equation}
\Sigma_\mathcal{D}\coloneqq\set{(A,B)\in\Sigma}{\mathcal{D}\in\mathfrak{B}_T(A,B)}.
\end{equation}

\section{Problem Formulation}
\label{sec:prob}

Consider the \emph{true} system $(A_\text{true},B_\text{true})\in\Sigma$. We assume that this system is \emph{unknown}, but satisfies
\begin{equation}
(A_\text{true},B_\text{true})\in\Sigma_\text{pk},
\end{equation}
where $\Sigma_\text{pk}\subseteq\Sigma$ is a set that captures our \emph{prior knowledge} of the true system. 

Given data $\mathcal{D}\in\mathfrak{B}_T(A_\text{true},B_\text{true})$ and the prior knowledge set $\Sigma_\text{pk}$, the available information about the true system is
\begin{equation}
(A_\text{true},B_\text{true})\in\Sigma_\mathcal{D}\cap\Sigma_\text{pk}.
\end{equation}
System identification aims at finding an estimate \mbox{$(\hat{A},\hat{B})\in\Sigma$} of the true system using the data and prior knowledge. In case $\Sigma_\mathcal{D}\cap\Sigma_\text{pk}$ is bounded, the \emph{worst-case error} for this estimation is
\begin{equation}
e(\hat{A},\hat{B})\coloneqq\sup_{(A,B)\in\Sigma_\mathcal{D}\cap\Sigma_\text{pk}}\norm{\begin{bmatrix}
A-\hat{A} & B-\hat{B}
\end{bmatrix}}.
\end{equation}
One approach to obtain an estimate $(\hat{A},\hat{B})$ is \mbox{set-membership identification} \cite{milanese1991optimal,li2024learning,shakouri2025chebyshev}, which aims at making the worst-case error as small as possible. This can be achieved by taking $(\hat{A},\hat{B})$ as a Chebyshev center of $\Sigma_\mathcal{D}\cap\Sigma_\text{pk}$, i.e., \mbox{$(\hat{A},\hat{B})\in\cent(\Sigma_\mathcal{D}\cap\Sigma_\text{pk})$}. Such an estimation is called a \emph{best worst-case estimation} of the true system. The worst-case error for this estimation is $e(\hat{A},\hat{B})=\rad(\Sigma_\mathcal{D}\cap\Sigma_\text{pk})$, which represents the \emph{identification accuracy}. In practice, we would like to have a worst-case error that is less than or equal to a given tolerance. It depends on the data $\mathcal{D}$ whether this tolerance can be achieved, leading to the following definition. 


\begin{definition}[Data informativity]
\label{def:inf_rho}
Let $(A,B)\in\Sigma_\text{pk}$, \mbox{$\mathcal{D}\in\mathfrak{B}_T(A,B)$}, and $\rho\geq 0$. We say that $\mathcal{D}$ is \emph{\mbox{$\Sigma_\textup{pk}$--informative} for $\rho$--accuracy identification} if
    \begin{equation}
    \label{eq:rad<rho}
    \rad(\Sigma_\mathcal{D}\cap\Sigma_\text{pk})\leq \rho.
    \end{equation}
In case $\rho=0$, we simply say that $\mathcal{D}$ is $\Sigma_\text{pk}$--\emph{informative for identification}\footnote{$\rad(\Sigma_\mathcal{D}\cap\Sigma_\text{pk})=0$ is equivalent to $\Sigma_\mathcal{D}\cap\Sigma_\text{pk}$ being a singleton, which results in exact identification.}. 
\end{definition}

In this paper, we are interested in \emph{experiment design}. Given a desired $\rho \geq 0$, an interesting problem is to find an input signal that renders the data $\mathcal{D}$, generated by the true system, informative for $\rho$--accuracy identification. The study of this problem is hindered by the fact that the true system, and thus its finite-length trajectories, are unknown. To address this issue, we sharpen the experiment design problem to finding an input signal that, when applied to \emph{any} system in $\Sigma_\text{pk}$, results in informative data. To make this idea precise, we now introduce the following notation. By applying input $u_{[0,T-1]}$ to a system $(A,B)\in\Sigma$, different datasets can be collected depending on the initial state. We define the set of all such datasets as
\begin{equation}
\mathfrak{B}(A,B,u_{[0,T-1]})\coloneqq\set{(\bar{u},\bar{x})\in\mathfrak{B}_T(A,B)}{\bar{u}=u_{[0,T-1]}}.
\end{equation}
Now, we aim at finding an input $u_{[0,T-1]}$ such that for every $(A,B)\in\Sigma_\text{pk}$, all members of $\mathfrak{B}(A,B,u_{[0,T-1]})$ are \mbox{$\Sigma_\text{pk}$--informative} for $\rho$--accuracy identification. Such an input is \emph{universal} in the sense that it works for all systems within $\Sigma_\textup{pk}$. To formalize this, we consider the following definition.

\begin{definition}[Universal inputs]
\label{def:universal_experiment_noisy}
Let $\rho\geq0$. An input \mbox{$u_{[0,T-1]}\in\mathbb{R}^{mT}$} is called \emph{$\Sigma_\textup{pk}$--universal for $\rho$--accuracy identification} if every 
\begin{equation}
\mathcal{D}\in \bigcup_{(A,B)\in\Sigma_\text{pk}}\mathfrak{B}(A,B,u_{[0,T-1]})
\end{equation}
is $\Sigma_\textup{pk}$--informative for $\rho$--accuracy identification. In case \mbox{$\rho=0$}, we simply say that $u_{[0,T-1]}$ is \emph{$\Sigma_\textup{pk}$--universal for identification}.
\end{definition}

A $\Sigma_\text{pk}$--universal input is a solution to the experiment design problem. This is because such an input guarantees that the generated data by the true system can be used for identification with a desired accuracy. Nevertheless, depending on the given prior knowledge, such universal inputs may not exist.  For instance, if $\Sigma_\text{pk}$ is an open set containing an uncontrollable system, then there are no $\Sigma_\text{pk}$--universal inputs for identification, see \cite[Thm. 8]{shakouri2025exp}. Therefore, \emph{universal experiment design} requires suitable prior knowledge. To investigate types of prior knowledge that enable such a design, we define the following notion.  



\begin{definition}
\label{def:enable_universal_experiment_noisy}
Let $\rho\geq 0$. The set $\Sigma_\textup{pk}$ is said to \emph{enable universal experiment design for $\rho$--accuracy identification} if there exists a $\Sigma_\textup{pk}$--universal input for $\rho$--accuracy identification. In case $\rho=0$, we simply say that $\Sigma_\textup{pk}$ \emph{enables universal experiment design for identification}. 
\end{definition}

In this work, we characterize prior knowledge that enables universal experiment design and provide methods for designing inputs that are universal for $\rho$-accuracy identification. 



%% file: UED.tex
\section{Universal Experiment Design With Noise-Free Data}
\label{sec:UED_noisefree}

In the noise-free case, $\varepsilon=0$, a corollary of Willems et al.'s fundamental lemma \cite[Cor. 2]{willems2005note} provides a sufficient condition for an input to be $\Sigma_\text{pk}$--universal for identification provided that all systems in $\Sigma_\text{pk}$ are controllable. We state this result in our terminology as follows. We denote the set of controllable systems in $\Sigma$ by $\Sigma_\text{cont}$.
\begin{proposition}
\label{prop:Willems}
Suppose that \mbox{$\varepsilon=0$} and $\Sigma_\text{pk}\subseteq\Sigma_\text{cont}$. If $u_{[0,T-1]}$ is persistently exciting of order $n+1$, then it is \mbox{$\Sigma_\text{pk}$--universal} for identification. 
\end{proposition}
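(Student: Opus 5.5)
The plan is to reduce universality to a rank condition on the data and then invoke Willems et al.'s fundamental lemma. Fix any $(A,B)\in\Sigma_\text{pk}$ and any $\mathcal{D}=(u_{[0,T-1]},x_{[0,T]})\in\mathfrak{B}(A,B,u_{[0,T-1]})$. Since $\varepsilon=0$, the definition of $\mathfrak{B}_T(A,B)$ forces $w\equiv 0$ on the horizon. Let
\begin{equation}
X_-\coloneqq\begin{bmatrix} x(0) & \cdots & x(T-1)\end{bmatrix},\quad X_+\coloneqq\begin{bmatrix} x(1) & \cdots & x(T)\end{bmatrix},\quad U_-\coloneqq\begin{bmatrix} u(0) & \cdots & u(T-1)\end{bmatrix}.
\end{equation}
Then a pair $(\bar A,\bar B)\in\Sigma$ belongs to $\Sigma_\mathcal{D}$ if and only if $X_+=\begin{bmatrix}\bar A & \bar B\end{bmatrix}\begin{bmatrix} X_-\\ U_-\end{bmatrix}$. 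Thus $\Sigma_\mathcal{D}$ is an affine set through $(A,B)$, with direction space equal to the left kernel of $\begin{bmatrix} X_-^\top & U_-^\top\end{bmatrix}^\top$ (row by row). In particular, $\Sigma_\mathcal{D}=\{(A,B)\}$ whenever this stacked matrix has full row rank $n+m$.

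In that case $\Sigma_\mathcal{D}\cap\Sigma_\text{pk}=\{(A,B)\}$, because $(A,B)\in\Sigma_\text{pk}$ and $(A,B)\in\Sigma_\mathcal{D}$. A singleton has Chebyshev radius $0$, so $\mathcal{D}$ is $\Sigma_\text{pk}$--informative for identification in the sense of Definition~\ref{def:inf_rho}. Since $(A,B)$ and $\mathcal{D}$ were arbitrary, Definition~\ref{def:universal_experiment_noisy} then gives universality.

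It remains to show the rank condition. Here I will invoke the fundamental lemma in its input-state form \cite[Cor. 2]{willems2005note}: if $(A,B)$ is controllable and $u_{[0,T-1]}$ is persistently exciting of order $n+1$, then $\begin{bmatrix} X_-\\ U_-\end{bmatrix}$ has full row rank for \emph{every} initial state $x(0)$. Controllability holds because $\Sigma_\text{pk}\subseteq\Sigma_\text{cont}$, and the persistency of excitation is the hypothesis. The input is the same for all systems, and the cited result is uniform over initial states, so it applies to every dataset in the union over $\Sigma_\text{pk}$. This gives universality rather than just informativity for a single system.

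The main point of care is matching the conclusion of \cite[Cor. 2]{willems2005note} exactly to our setting; the rest is bookkeeping. If one prefers a self-contained argument, I would sketch the standard proof. Suppose $\begin{bmatrix}\xi^\top & \eta^\top\end{bmatrix}$ annihilates $\begin{bmatrix} X_-\\ U_-\end{bmatrix}$. Propagating this relation through the system over $n+1$ steps, using Cayley--Hamilton, produces a nonzero vector in the left kernel of $\mathcal{H}_{n+1}(u_{[0,T-1]})$. That contradicts persistency of excitation, and controllability is exactly what guarantees the constructed vector is nonzero. I would not reproduce this in full and would instead cite the lemma.
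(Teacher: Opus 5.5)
Your proposal is correct and follows the paper, which gives no separate proof and presents this proposition as a restatement of \cite[Cor. 2]{willems2005note}: controllability plus persistency of excitation of order $n+1$ gives full row rank of $\begin{bmatrix}\mathcal{H}_1(x_{[0,T-1]})^\top & \mathcal{H}_1(u_{[0,T-1]})^\top\end{bmatrix}^\top$ for every initial state, which makes $\Sigma_\mathcal{D}\cap\Sigma_\text{pk}$ a singleton exactly as you argue. Your direct singleton argument is also appropriate, since it does not need $\Sigma_\text{pk}$ to be open as Proposition~\ref{prop:inf_noise-free_int} does; and for the self-contained version you sketch, the paper's identity \eqref{eq:GM_formula} carries out the propagation and Cayley--Hamilton step in one line, because $M_{A,B}D_A\mathcal{H}_{n+1}(u_{[0,T-1]})$ has full row rank when $M_{A,B}$ does (controllability), $D_A$ is nonsingular, and $\mathcal{H}_{n+1}(u_{[0,T-1]})$ has full row rank.
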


Persistency of excitation of order $n+1$ requires the number of data samples to satisfy $T\geq nm+n+m$. An interesting question is whether there exist \mbox{$\Sigma_\text{pk}$--universal} inputs that are not persistently exciting of order $n+1$, e.g., of length shorter \mbox{than $nm+n+m$}. To answer this question, we split our study into two parts. First, we consider sets of prior knowledge that are open in $\Sigma$. Later, we extend our results to the general case with arbitrary sets of prior knowledge.

\subsection{Open sets of prior knowledge}
\label{sec:noise-free_open}

In case $\Sigma_\text{pk}$ is an open set, we recall that a rank condition on the data fully characterizes their $\Sigma_\text{pk}$--informativity for identification.
\begin{proposition}[{\cite[Thm. 3]{shakouri2025exp}}]
\label{prop:inf_noise-free_int}
Suppose that $\varepsilon=0$ and $\Sigma_\text{pk}$ is open. Let $\mathcal{D}=(u_{[0,T-1]},x_{[0,T]})\in\mathfrak{B}_T(A_\text{true},B_\text{true})$. Then, the following statements are \emph{equivalent}:
\begin{enumerate}[label=(\alph*),ref=\ref{prop:inf_noise-free_int}(\alph*)]
    \item\label{prop:inf_noise-free_int(a)} $\mathcal{D}$ is $\Sigma_\text{pk}$--informative for identification.
    \item\label{prop:inf_noise-free_int(b)} $\mathcal{D}$ is $\Sigma$--informative for identification
    \item\label{prop:inf_noise-free_int(c)} We have $\rank \begin{bmatrix}
\mathcal{H}_1(x_{[0,T-1]}) \\
\mathcal{H}_1(u_{[0,T-1]})
\end{bmatrix}=n+m$.
\end{enumerate}
\end{proposition}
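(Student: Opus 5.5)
The plan is to prove the cycle (c)$\Rightarrow$(b)$\Rightarrow$(a)$\Rightarrow$(c), using the linear structure of $\Sigma_\mathcal{D}$ when $\varepsilon=0$. Write $X_-\coloneqq\mathcal{H}_1(x_{[0,T-1]})$, $X_+\coloneqq\mathcal{H}_1(x_{[1,T]})$ and $U_-\coloneqq\mathcal{H}_1(u_{[0,T-1]})$. Since there is no noise,
\begin{equation}
\Sigma_\mathcal{D}=\set{(A,B)\in\Sigma}{X_+=\begin{bmatrix} A & B\end{bmatrix}\begin{bmatrix} X_-\\ U_-\end{bmatrix}}.
\end{equation}
The data come from the true system, so this set is nonempty. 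It is therefore the affine set
\begin{equation}
\begin{bmatrix} A_\text{true} & B_\text{true}\end{bmatrix}+\set{\begin{bmatrix} \Delta_A & \Delta_B\end{bmatrix}}{\begin{bmatrix} \Delta_A & \Delta_B\end{bmatrix}\begin{bmatrix} X_-\\ U_-\end{bmatrix}=0}.
\end{equation}

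For (c)$\Rightarrow$(b), the rank condition says the stacked data matrix has full row rank $n+m$. Then the only $[\Delta_A\ \Delta_B]$ annihilating it is zero, so $\Sigma_\mathcal{D}=\{(A_\text{true},B_\text{true})\}$ and its Chebyshev radius is $0$.

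For (b)$\Rightarrow$(a), I use that $(A_\text{true},B_\text{true})\in\Sigma_\mathcal{D}\cap\Sigma_\text{pk}\subseteq\Sigma_\mathcal{D}$. If $\Sigma_\mathcal{D}$ is a singleton, the intersection is that same singleton, so its radius is $0$. Here I use the footnote's equivalence between radius zero and the set being a singleton. One should first note that a bounded affine set is a singleton, so (b) forces $\Sigma_\mathcal{D}$ to be a singleton.

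The substantive step, and the only one using openness, is (a)$\Rightarrow$(c), which I prove by contraposition. Suppose the rank is less than $n+m$. Pick a nonzero $\xi\in\mathbb{R}^{n+m}$ with $\xi^\top\begin{bmatrix}X_-\\ U_-\end{bmatrix}=0$ and a nonzero $\eta\in\mathbb{R}^n$. Then for every $t$,
\begin{equation}
\begin{bmatrix} A_\text{true} & B_\text{true}\end{bmatrix}+t\,\eta\xi^\top\in\Sigma_\mathcal{D}.
\end{equation}
Because $\Sigma_\text{pk}$ is open and contains the true system, these pairs lie in $\Sigma_\text{pk}$ for all sufficiently small $|t|$. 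So $\Sigma_\mathcal{D}\cap\Sigma_\text{pk}$ contains a nondegenerate segment and is not a singleton.

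One subtlety is that the intersection might be unbounded, in which case $\rad$ is not defined as a finite number. Either way, informativity fails: radius zero would require a singleton. The main obstacle is just handling this convention cleanly. I will interpret (a) as requiring the intersection to be a bounded singleton, which a set containing a segment is not.
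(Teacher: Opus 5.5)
Your proof is correct. The affine description of $\Sigma_\mathcal{D}$ for $\varepsilon=0$ and the cycle (c)$\Rightarrow$(b)$\Rightarrow$(a)$\Rightarrow$(c) both hold. So does the rank-one perturbation $[A_\text{true}\ B_\text{true}]+t\,\eta\xi^\top$, which stays in the open set $\Sigma_\text{pk}$ for small $|t|$, and your handling of the unbounded case is fine. The paper does not prove this result itself; it imports it from \cite[Thm. 3]{shakouri2025exp}. Your argument is, however, the same kernel reasoning the paper uses for Proposition~\ref{prop:data_best}, with openness supplying a second data-consistent element of $\Sigma_\text{pk}$. Your remark that a bounded affine set is a singleton is harmless but redundant, since $\rad\Sigma_\mathcal{D}=0$ already forces $\Sigma_\mathcal{D}$ to be a singleton.
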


From Proposition~\ref{prop:inf_noise-free_int} we see that universal experiment design for identification boils down to finding an input by which the resulting data satisfies the rank condition in Proposition~\ref{prop:inf_noise-free_int(c)}. Before going further, we also recall that the set of controllable systems is the largest open set that enables universal experiment design for identification. 

\begin{proposition}[{\cite[Cor. 9]{shakouri2025exp}}]
\label{prop:ctrl_nec}
Suppose that $\varepsilon=0$ and $\Sigma_\text{pk}$ is open. Then, $\Sigma_\textup{pk}$ enables universal experiment design for identification \emph{if and only if} $\Sigma_\textup{pk}\subseteq\Sigma_\text{cont}$.
\end{proposition}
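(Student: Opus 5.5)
For the ``if'' direction I would invoke Proposition~\ref{prop:Willems}. If $\Sigma_\text{pk}\subseteq\Sigma_\text{cont}$, any input $u_{[0,T-1]}$ that is persistently exciting of order $n+1$ is $\Sigma_\text{pk}$--universal for identification. Such inputs exist, for instance a generic (random) input with $T\geq nm+n+m$, since the Hankel matrix $\mathcal{H}_{n+1}(u_{[0,T-1]})$ then has full row rank for almost every choice. Hence $\Sigma_\text{pk}$ enables universal experiment design. Openness is not even needed for this direction.

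For the ``only if'' direction I would argue by contraposition. Suppose $\Sigma_\text{pk}$ is open and contains an uncontrollable pair $(A_0,B_0)$, and fix an arbitrary input $u_{[0,T-1]}$. I would construct a system in $\Sigma_\text{pk}$ and an initial state whose data violate the rank condition in Proposition~\ref{prop:inf_noise-free_int(c)}. By Proposition~\ref{prop:inf_noise-free_int}, that data is not $\Sigma_\text{pk}$--informative, so the input cannot be universal.

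The construction uses uncontrollability. By the Hautus test there exist $\lambda\in\mathbb{C}$ and $z\neq 0$ with $z^*A_0=\lambda z^*$ and $z^*B_0=0$. Any trajectory then satisfies $z^*x(t)=\lambda^t z^*x(0)$. Choosing $x(0)$ with $z^*x(0)=0$ gives $z^*x(t)=0$ for all $t$. The columns $(x(t),u(t))$, $t\in[0,T-1]$, are therefore annihilated by $[z^* \;\, 0]$, so the stacked matrix $\begin{bmatrix}\mathcal{H}_1(x_{[0,T-1]})\\ \mathcal{H}_1(u_{[0,T-1]})\end{bmatrix}$ has rank less than $n+m$.

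The main obstacle is a possibly complex $\lambda$, where $z^*$ is a complex vector. Taking real and imaginary parts gives a real $A_0^\top$-invariant subspace $V$ of dimension at most $2$ that is annihilated by $B_0^\top$. Its orthogonal complement $V^\perp$ is $A_0$-invariant, so starting from $x(0)\in V^\perp$ (for example $x(0)=0$) keeps $x(t)\in V^\perp$ for all $t$. The rank deficiency follows as before, using any real $v\in V\setminus\{0\}$. Openness is not needed here either, since $(A_0,B_0)\in\Sigma_\text{pk}$ itself serves as the system; it is used only through Proposition~\ref{prop:inf_noise-free_int}. Since the input was arbitrary, no $\Sigma_\text{pk}$--universal input exists, which completes the contrapositive.
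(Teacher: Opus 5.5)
Your proof is correct. The paper does not reprove this proposition; it only cites \cite[Cor.~9]{shakouri2025exp}. Your route is the one the paper points to: the ``if'' part via Proposition~\ref{prop:Willems}, and the ``only if'' part by running an uncontrollable $(A_0,B_0)\in\Sigma_\text{pk}$ from $x(0)=0$ to get rank-deficient data, which Proposition~\ref{prop:inf_noise-free_int} declares non-informative because $\Sigma_\text{pk}$ is open. The complex-$\lambda$ step you flag as the main obstacle is not needed. With $x(0)=0$, every $x(t)$ lies in the proper reachable subspace $\mathrm{im}\begin{bmatrix} B_0 & A_0B_0 & \cdots & A_0^{n-1}B_0\end{bmatrix}$. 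Alternatively, since the data matrix is real, a nonzero real or imaginary part of $z$ already gives a real left-kernel vector.
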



It was shown recently in \cite[Thm. 4]{shakouri2025new} that if $\Sigma_\text{pk}=\Sigma_\text{cont}$, then $\Sigma_\text{pk}$--universality for identification is equivalent to persistency of excitation of order $n+1$. For single-input systems, one can sharpen this result by showing this equivalence not only for $\Sigma_\text{pk}=\Sigma_\text{cont}$, but for \emph{all} open sets of prior knowledge satisfying $\Sigma_\text{pk}\subseteq\Sigma_\text{cont}$.

\begin{theorem}[Single-input systems]
\label{th:single-input-noiseless}
Suppose that $m=1$, $\varepsilon=0$, and $\Sigma_\text{pk}\subseteq\Sigma_\text{cont}$ is open. Then, $u_{[0,T-1]}$ is \mbox{$\Sigma_\textup{pk}$--universal} for identification \emph{if and only if} it is persistently exciting of order $n+1$. 
\end{theorem}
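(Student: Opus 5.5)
The "if" direction is immediate: since $\Sigma_\text{pk}\subseteq\Sigma_\text{cont}$, Proposition~\ref{prop:Willems} shows that persistency of excitation of order $n+1$ implies $\Sigma_\text{pk}$--universality for identification. So the work is in the "only if" direction, which I would prove by contraposition.

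Suppose $u_{[0,T-1]}$ is not persistently exciting of order $n+1$, so $\mathcal{H}_{n+1}(u_{[0,T-1]})$ does not have full row rank. Then there is a nonzero $\eta=(\eta_0,\dots,\eta_n)$ with
\[
\textstyle\sum_{i=0}^n \eta_i u(t+i)=0 \quad\text{for all } t\in[0,T-n-1].
\]
Write $\eta(z)=\sum_i\eta_i z^i$. By Proposition~\ref{prop:inf_noise-free_int}, it suffices to exhibit $(A,b)\in\Sigma_\text{pk}$, an initial state $x(0)$, and a nonzero $(\xi,\zeta)\in\mathbb{R}^n\times\mathbb{R}$ such that $\xi^\top x(t)+\zeta u(t)=0$ for all $t\in[0,T-1]$. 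This makes the rank in Proposition~\ref{prop:inf_noise-free_int(c)} at most $n$, so the resulting data is not $\Sigma_\text{pk}$--informative and $u$ is not universal.

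\textbf{Choosing the system.} Since $\Sigma_\text{pk}$ is open and nonempty, I first pick some $(A_0,b_0)\in\Sigma_\text{pk}$, which is controllable. A small perturbation of $A_0$ keeps the pair in $\Sigma_\text{pk}$ and controllable, while making the characteristic polynomial $p$ of $A$ coprime with $\eta(z)$; this is possible because $\eta$ has finitely many roots and eigenvalues move continuously. Controllability is invariant under similarity, so I may transform to controllable canonical form. There the state is $x(t)=(v(t),v(t+1),\dots,v(t+n-1))$ for a scalar sequence $v$ satisfying $p(\sigma)v=u$, with $\sigma$ the forward shift.

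\textbf{Constructing the trajectory.} Because $\deg\eta\le n$, there are $(\xi,\zeta)$ with $\xi^\top x(t)+\zeta u(t)=\eta(\sigma)v(t)$. Concretely, write $\eta(\sigma)v=\eta_n p(\sigma)v+r(\sigma)v$ with $\deg r<n$, take $\zeta=\eta_n$, and read $\xi$ off the coefficients of $r$. This pair is nonzero because $\eta\ne0$. It remains to find $v$ on $[0,T+n-1]$ with $p(\sigma)v=u$ and $\eta(\sigma)v=0$ on the relevant windows. The idea is to extend $u$ to a sequence in the finite-dimensional shift-invariant space $\ker\eta(\sigma)$; the recurrence in $\eta$ allows this extension, using the leading nonzero coefficient, with a trailing-zero case handled separately. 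Coprimality of $p$ and $\eta$ then makes $p(\sigma)$ invertible on $\ker\eta(\sigma)$ via a Bezout identity. Setting $v=p(\sigma)^{-1}u$ inside that space gives $\eta(\sigma)v\equiv0$ and $p(\sigma)v=u$, and $x(0)$ is read off from $v$.

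\textbf{Main obstacle.} I expect the main difficulty to be the index bookkeeping. The relation $\eta(\sigma)u=0$ is only known on $[0,T-n-1]$, whereas $v$ and $x$ live on longer windows. Moreover, $\eta$ may have $\eta_n=0$ or $\eta_0=0$, so its effective degree $d$ is less than $n$. I would handle this by working with the truncated polynomial of exact degree $d$ and lowest nonzero coefficient, extending $u$ forward and backward by the recurrence, and checking that the required identities hold exactly on $t\in[0,T-1]$. The coprimality and perturbation step, by contrast, is routine given openness.
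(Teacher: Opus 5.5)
Your outer argument is the paper's. You take the ``if'' part from Proposition~\ref{prop:Willems}. For the converse you take a nonzero $\eta$ in the left kernel of $\mathcal{H}_{n+1}(u_{[0,T-1]})$ and use openness to pick $(A,b)\in\Sigma_\text{pk}$ with $\eta(\lambda)\neq 0$ for all $\lambda\in\spec A$; replacing $A_0$ by $A_0+\delta I$ makes that step precise. You then produce a trajectory with rank-deficient data and conclude with Proposition~\ref{prop:inf_noise-free_int}. The difference is that the paper does not construct the trajectory; it invokes Lemma~\ref{lem:contruction_m=1}. Your self-contained replacement is sound when $\eta_n\neq 0$. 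On one-sided sequences, $u$ extends forward into the $n$-dimensional shift-invariant space $\ker\eta(\sigma)$. A Bezout identity $\phi p+\psi\eta=1$ then gives $v=\phi(\sigma)\bar u$ with $p(\sigma)v=\bar u$ and $\eta(\sigma)v=0$. The case $\eta_0=0$ needs no separate treatment, since coprimality already forces $p(0)\neq 0$.

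The construction breaks, however, when $\eta_n=0$, and this case cannot be avoided. Take $n=1$, $T=3$, $u_{[0,2]}=(0,0,1)$: the left kernel of $\mathcal{H}_2(u_{[0,2]})$ is spanned by $(1,0)$, so $\ker\eta(\sigma)=\{0\}$ and $u$ does not extend into it. In general, if $d=\deg\eta<n$, the hypothesis constrains $u$ only on $[0,T-1-(n-d)]$. The last $n-d$ inputs need not satisfy the recurrence, so ``extending $u$ forward by the recurrence'' would overwrite given data. Your bookkeeping paragraph names this obstacle but does not resolve it.

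The missing observation is that $\eta(\sigma)v(t)$ for $t\le T-1$ involves $v$ only up to index $T-1+d$. So extend only $u_{[0,T-1-(n-d)]}$ into $\ker\eta(\sigma)$, and set $v=\phi(\sigma)\bar u$ on $[0,T-1+d]$. Then choose $v(T+d),\dots,v(T+n-1)$ recursively, which is possible because $p$ is monic, so that $p(\sigma)v(t)=u(t)$ for $t\in[T-(n-d),T-1]$. These last values do not affect $\eta(\sigma)v$ on $[0,T-1]$. With this fix your argument becomes a complete proof of what the paper imports from Lemma~\ref{lem:contruction_m=1}.
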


For the multi-input case, however, it is not true in general that all $\Sigma_\text{pk}$--universal inputs are persistently exciting of order $n+1$. This is shown by the following example.

\begin{example}
\label{ex:PE_not_univ}
Consider a set of controllable systems with \mbox{$n=1$} and $m=2$ given by 
\begin{equation}
\Sigma_\text{pk}=\set{(\alpha,\begin{bmatrix}
\beta_1 & \beta_2
\end{bmatrix})}{\alpha\beta_2+ \beta_1\neq 0}.
\end{equation}
Take the input signal as \mbox{$u(0)=u(3)=\begin{bmatrix}
0 & 0
\end{bmatrix}^\top$}, \mbox{$u(1)=\begin{bmatrix}
0 & 1
\end{bmatrix}^\top$}, and \mbox{$u(2)=\begin{bmatrix}
1 & 0
\end{bmatrix}^\top$}. This input is not persistently exciting of order $2$. Applying this input to any member of $\Sigma_\text{pk}$, with initial state $x(0)=x_0$, results in the following input-state data:
\begin{equation}
\begin{bmatrix}
\mathcal{H}_1(x_{[0,T-1]}) \\
\mathcal{H}_1(u_{[0,T-1]})
\end{bmatrix}\!\!=\!\!\begin{bmatrix}
x_0 \!&\! \alpha x_0  \!&\! \alpha^2 x_0\!+\!\beta_2  \!&\! \alpha^3 x_0\!+\!\alpha\beta_2\!+\!\beta_1 \\
0  \!&\! 0  \!&\! 1  \!&\! 0 \\
0  \!&\! 1  \!&\! 0  \!&\! 0
\end{bmatrix}\!.
\end{equation}
We observe that this matrix has full row rank for all $x_0\in\mathbb{R}^n$. Therefore, this input is $\Sigma_\text{pk}$--universal for identification. 
\end{example}

Example~\ref{ex:PE_not_univ} shows that, for multi-input systems, one might be able to find universal inputs of shorter length than persistently exciting ones by incorporating prior knowledge of the system. To study this further, we first introduce some notation. For a pair $(A,B)\in\Sigma$, we define
\begin{equation}
M_{A,B}\coloneqq \begin{bmatrix}
0 & B & AB & \cdots & A^{n-1} B \\
I_m & 0 & 0 & \cdots & 0
\end{bmatrix}, 
\end{equation}
\begin{equation}
D_A\coloneqq \begin{bmatrix}
d_0 & \cdots & d_{n-1} & d_n \\
d_1 & \cdots & d_n & 0 \\
\vdots & \iddots & \vdots & \vdots \\
d_n & \cdots & 0 & 0
\end{bmatrix}\otimes I_m, \hspace{0.25 cm} \textup{and} \hspace{0.25 cm} d_A\coloneqq \begin{bmatrix}
d_0 \\ d_1 \\ \vdots \\ d_n
\end{bmatrix},
\end{equation}
where $d_0,\ldots,d_{n}\in\mathbb{R}$ are the coefficients of the characteristic polynomial of $A$ satisfying $\sum_{i=0}^n d_iA^i=0$ and $d_n=1$. Now, given \emph{any} set of prior knowledge, the following theorem presents a sufficient condition for an input to be $\Sigma_\textup{pk}$--universal for identification. 

\begin{theorem}
\label{th:GM_noiseless}
Suppose that $\varepsilon=0$ and $\Sigma_\text{pk}\subseteq\Sigma_\text{cont}$. Then, the input $u_{[0,T-1]}$ is $\Sigma_\textup{pk}$--universal for identification if 
\begin{equation}
\label{eq:th:GM_noiseless-1}
\rank M_{A,B}D_A\mathcal{H}_{n+1}(u_{[0,T-1]}) = n+m
\end{equation}
for all $(A,B)\in\Sigma_\textup{pk}$.  
\end{theorem}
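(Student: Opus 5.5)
The plan is to prove the statement directly, by a left-kernel argument on the input-state data matrix. Fix $(A,B)\in\Sigma_\text{pk}$ and an arbitrary $\mathcal{D}=(u_{[0,T-1]},x_{[0,T]})\in\mathfrak{B}(A,B,u_{[0,T-1]})$; the initial state is arbitrary. The first goal is to show that the rank condition of Proposition~\ref{prop:inf_noise-free_int(c)} holds, i.e.
\begin{equation}
\rank\begin{bmatrix}\mathcal{H}_1(x_{[0,T-1]})\\ \mathcal{H}_1(u_{[0,T-1]})\end{bmatrix}=n+m .
\end{equation}
Proposition~\ref{prop:inf_noise-free_int} assumes an open prior knowledge set, so I will not invoke it. Instead I will use the elementary fact behind it. Since $\varepsilon=0$, every data-consistent system satisfies $x_{[1,T]}$-columns $=[A\ \ B]\begin{bmatrix}\mathcal{H}_1(x_{[0,T-1]})\\ \mathcal{H}_1(u_{[0,T-1]})\end{bmatrix}$. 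Full row rank then determines $[A\ \ B]$ uniquely. Hence $\Sigma_\mathcal{D}\cap\Sigma_\text{pk}$ is the singleton $\{(A,B)\}$, which is nonempty because the generating system lies in $\Sigma_\text{pk}$. Therefore $\rad(\Sigma_\mathcal{D}\cap\Sigma_\text{pk})=0$. The hypothesis $\Sigma_\text{pk}\subseteq\Sigma_\text{cont}$ is not needed in this argument beyond guaranteeing the setting; the rank assumption \eqref{eq:th:GM_noiseless-1} already forces $T\ge n+1$.

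For the rank claim, suppose $\xi\in\mathbb{R}^n$ and $\eta\in\mathbb{R}^m$ satisfy $\xi^\top x(s)+\eta^\top u(s)=0$ for all $s\in[0,T-1]$. Fix a column index $t\in[0,T-n-1]$ of $\mathcal{H}_{n+1}(u_{[0,T-1]})$; then $t+n\le T-1$, so the relation holds at $s=t,\dots,t+n$. Using the noise-free dynamics,
\begin{equation}
x(t+j)=A^jx(t)+\sum_{i=0}^{j-1}A^{j-1-i}Bu(t+i).
\end{equation}
Form $\sum_{j=0}^n d_j\,\xi^\top x(t+j)$. By the Cayley--Hamilton theorem, $\sum_j d_jA^j=0$, so the $x(t)$-term vanishes. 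This yields the identity
\begin{equation}
\xi^\top\sum_{i=0}^{n-1}\Big(\sum_{j=i+1}^{n}d_jA^{j-1-i}\Big)Bu(t+i)+\eta^\top\sum_{i=0}^{n}d_iu(t+i)=0 .
\end{equation}

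The key computational step is to recognise the left-hand side as $[\xi^\top\ \eta^\top]\,M_{A,B}D_A\,u_{[t,t+n]}$. The $k$-th block row of $D_A$ ($k=0,\dots,n$) applied to $u_{[t,t+n]}$ gives $\sum_i d_{k+i}u(t+i)$. The first block row of $M_{A,B}$ then produces $\sum_{k=1}^{n}A^{k-1}B\sum_i d_{k+i}u(t+i)$. Substituting $j=k+i$ turns this into exactly the first sum in the identity above. The second block row $[I_m\ 0\cdots 0]$ picks out the $k=0$ term $\sum_i d_iu(t+i)$.

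Since this holds for every column $t$, we get $[\xi^\top\ \eta^\top]M_{A,B}D_A\mathcal{H}_{n+1}(u_{[0,T-1]})=0$. Condition \eqref{eq:th:GM_noiseless-1} then forces $\xi=0$ and $\eta=0$, which proves the rank claim. The only delicate point I expect is the index bookkeeping in matching the double sum to $M_{A,B}D_A$, in particular the anti-triangular Hankel structure of $D_A$ and the range $t\le T-n-1$ that ensures only available states are used. The rest is routine.
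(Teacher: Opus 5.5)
Your proof is correct and essentially the paper's proof. Your column-wise left-kernel computation is exactly the identity $\begin{bmatrix}\mathcal{H}_1(x_{[0,T-1]})\\ \mathcal{H}_1(u_{[0,T-1]})\end{bmatrix}\sum_{i=0}^{n}d_iQ_i=M_{A,B}D_A\mathcal{H}_{n+1}(u_{[0,T-1]})$ of Lemma~\ref{lem:Omega_Gamma_Theta} (with $C=\begin{bmatrix}I_n & 0\end{bmatrix}^\top$, $D=\begin{bmatrix}0 & I_m\end{bmatrix}^\top$, $w=0$, $k=n$), premultiplied by $\begin{bmatrix}\xi^\top & \eta^\top\end{bmatrix}$. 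You then conclude, as the paper does, that the data matrix has full row rank. The one difference is the final step: the paper cites Proposition~\ref{prop:inf_noise-free_int}, which is stated for open $\Sigma_\text{pk}$, whereas you argue directly that full row rank makes $\Sigma_\mathcal{D}\cap\Sigma_\text{pk}$ a singleton, which is cleaner since the theorem does not assume openness.
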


Under the hypotheses of Theorem \ref{th:GM_noiseless}, the matrix $M_{A,B}$ has full row rank because of the controllability. Moreover, $D_A$ is nonsingular by definition. Hence, it is easy to see that if the input is persistently exciting of order $n+1$, then \eqref{eq:th:GM_noiseless-1} is satisfied. Nevertheless, condition \eqref{eq:th:GM_noiseless-1} on the input signal is less conservative than persistency of excitation of order $n+1$. This is because \eqref{eq:th:GM_noiseless-1} can be satisfied without $\mathcal{H}_{n+1}(u_{[0,T-1]}) $ having full row rank. In fact, condition \eqref{eq:th:GM_noiseless-1} needs the number of data samples to satisfy
\begin{equation}
\label{eq:lower_bound}
T\geq2n+m,
\end{equation}
which improves the least number of data samples $nm+n+m$ required for the persistency of excitation of order $n+1$. The following example shows that, for some sets of prior knowledge, one can design universal inputs using Theorem~\ref{th:GM_noiseless} such that the lower bound in~\eqref{eq:lower_bound} is attained.

\begin{example}
Consider the set $\Sigma_\text{pk}$ and the input signal $u_{[0,3]}$ as in Example~\ref{ex:PE_not_univ}. Here, we use Theorem~\ref{th:GM_noiseless} to show that $u_{[0,3]}$ is $\Sigma_\text{pk}$--universal for identification. Observe that 
\begin{equation}
M_{A,B}D_A\mathcal{H}_{2}(u_{[0,3]})=\begin{bmatrix}
0 & \beta_2 & \beta_1 \\
0 & 1  & -\alpha \\
1 & -\alpha &  0
\end{bmatrix}.
\end{equation}
Thus, we have \mbox{$\det M_{A,B}D_A\mathcal{H}_{2}(u_{[0,3]})=- \alpha\beta_2-\beta_1\neq 0$}. Hence, \eqref{eq:th:GM_noiseless-1} holds for all $(A,B)\in\Sigma_\text{pk}$. Therefore, $u_{[0.3]}$ is $\Sigma_\text{pk}$--universal for identification. We emphasize that this input is not persistently exciting of order $2$ and the number of data samples is equal to the lower bound in \eqref{eq:lower_bound}.
\end{example}

Theorem~\ref{th:GM_noiseless} shows that, depending on the prior knowledge, one might be able to find universal inputs that are not persistently exciting of order $n+1$, leading to a shorter experiment. Nevertheless, for some $\Sigma_\text{pk}\subseteq\Sigma_\text{cont}$, persistently exciting inputs of order $n+1$ are the only $\Sigma_\text{pk}$--universal ones. As mentioned before, one such case corresponds to $\Sigma_\text{pk}=\Sigma_\text{cont}$, see \cite[Thm. 4]{shakouri2025new}. Now, the following theorem shows that for any open set of prior knowledge including a norm bound on $B_\text{true}$, universality and persistency of excitation of order $n+1$ are equivalent. 

\begin{theorem}
\label{th:PE_nec_ball}
Suppose that $\varepsilon=0$ and
\begin{equation}
\Sigma_\text{pk}=\set{(A,B)\in\Sigma_\text{cont}}{A\in\mathcal{A}_\text{pk},\norm{B}< \beta},
\end{equation}
where $\beta>0$ and $\mathcal{A}_\text{pk}\subseteq\mathbb{R}^{n\times n}$ is open. Then, $u_{[0,T-1]}$ is $\Sigma_\textup{pk}$--universal for identification \emph{if and only if} it is persistently exciting of order $n+1$. 
\end{theorem}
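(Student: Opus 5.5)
The direction ``persistently exciting $\Rightarrow$ universal'' is immediate from Proposition~\ref{prop:Willems}, because $\Sigma_\text{pk}\subseteq\Sigma_\text{cont}$. For the converse I would argue by contraposition. Suppose $\mathcal{H}_{n+1}(u_{[0,T-1]})$ does not have full row rank, and pick $\eta=(\eta_0,\ldots,\eta_n)\neq 0$, $\eta_i\in\mathbb{R}^m$, with $\eta^\top\mathcal{H}_{n+1}(u_{[0,T-1]})=0$. The goal is to produce $(A,B)\in\Sigma_\text{pk}$ and an initial state $x_0$ for which the resulting data violate the rank condition of Proposition~\ref{prop:inf_noise-free_int(c)}. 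By Proposition~\ref{prop:inf_noise-free_int}, such data are not $\Sigma_\text{pk}$--informative, which is what we want.

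The first step removes the norm bound. If $(u,x)$ is a trajectory of $(A,B)$ from $x_0$, then $(u,cx)$ is a trajectory of $(A,cB)$ from $cx_0$, and the rank of $\begin{bmatrix}\mathcal{H}_1(x_{[0,T-1]})^\top & \mathcal{H}_1(u_{[0,T-1]})^\top\end{bmatrix}^\top$ is unchanged for $c\neq0$. Controllability is also preserved under this scaling. Hence it suffices to find any controllable $(A,B)$ with $A\in\mathcal{A}_\text{pk}$ and \emph{arbitrary} $B$ that produces rank-deficient data; afterwards I scale $B$ so that $\norm{B}<\beta$.

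The second step is the algebraic core, which runs the computation behind Theorem~\ref{th:GM_noiseless} in reverse. For any trajectory, Cayley--Hamilton gives
\[
\sum_i d_i x(t+i)=\begin{bmatrix}B & \cdots & A^{n-1}B\end{bmatrix}(\text{linear combination of } u(t),\ldots,u(t+n)),
\]
independently of $x_0$. Stacking this identity with $u(t)$ yields the columns of $M_{A,B}D_A\mathcal{H}_{n+1}(u_{[0,T-1]})$. Set $\zeta^\top\coloneqq\eta^\top D_A^{-1}=(\zeta_0^\top,\ldots,\zeta_n^\top)$. I look for $a\in\mathbb{R}^n$ and $b\in\mathbb{R}^m$ with
\[
\begin{bmatrix}a^\top & b^\top\end{bmatrix}M_{A,B}=\zeta^\top,\qquad\text{i.e.}\qquad b=\zeta_0,\quad a^\top A^kB=\zeta_{k+1}^\top \ (k\in[0,n-1]).
\]
With these choices, $y(t)\coloneqq a^\top x(t)+b^\top u(t)$ satisfies $\sum_i d_i y(t+i)=\eta^\top(\text{column } t \text{ of } \mathcal{H}_{n+1}(u))=0$ for $t\in[0,T-n-1]$. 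So $y$ obeys the autonomous recurrence of $A$ on the window, and is therefore determined by $y(0),\ldots,y(n-1)$. Since $y(t)=a^\top A^tx_0+(\text{input terms})$, I choose $x_0$ to make these first $n$ values vanish, which is possible whenever $(a^\top,A)$ is observable. Then $y\equiv 0$ on $[0,T-1]$, so $\begin{bmatrix}a^\top & b^\top\end{bmatrix}\neq0$ is a left annihilator of the data matrix.

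It remains to choose $A$, $a$, $B$ compatibly. Since $\mathcal{A}_\text{pk}$ is open, I pick a cyclic $A\in\mathcal{A}_\text{pk}$ (cyclic matrices are dense) and a cyclic vector $a$ for $A^\top$. The observability matrix $O$ with rows $a^\top A^k$ is then invertible. I set $B\coloneqq O^{-1}\begin{bmatrix}\zeta_1 & \cdots & \zeta_n\end{bmatrix}^\top$ and $b\coloneqq\zeta_0$.

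The main obstacle I expect is guaranteeing that this $B$ makes $(A,B)$ controllable; $B$ can even vanish, e.g.\ when $\zeta_1=\cdots=\zeta_n=0$. To handle this I would use the remaining freedom: the choice of $a$ among cyclic vectors, and of $A$ within the open set $\mathcal{A}_\text{pk}$. Note that $\zeta$ depends on $A$ through $D_A$, so perturbing $A$ also moves $\zeta$. Controllability of the resulting pair is a polynomial (non-)vanishing condition in these parameters, so the plan is to show that it is not identically violated and then pick a generic point. In the degenerate case $\zeta_1=\cdots=\zeta_n=0$, where $\eta$ corresponds to $b^\top u\equiv0$ on the window, I would treat things separately. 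There I would augment the annihilator with a component acting on $x$ whose contribution is cancelled by a suitable $x_0$ for a controllable $B$ chosen with $a^\top A^kB=0$; this requires a non-cyclic-vector argument for $a$, or a small perturbation of $\eta$ within the left kernel when the kernel has dimension greater than one.
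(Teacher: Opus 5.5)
Your ``if'' direction, the scaling reduction, and the annihilator construction are correct. The identity $\begin{bmatrix}a^\top & b^\top\end{bmatrix}M_{A,B}D_A=\eta^\top$, combined with observability of $(a^\top,A)$ to kill $y(0),\ldots,y(n-1)$, gives a self-contained proof of the rank-deficiency half of Lemma~\ref{lem:contruction}. But there is a genuine gap exactly where you expected one: nothing in the proposal guarantees that $(A,B)$ is controllable. Without that, $(A,cB)\notin\Sigma_\text{pk}$.

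The remedies you sketch do not close this gap. The freedom in the cyclic vector $a$ is irrelevant, as shown below. The fallback ``a controllable $B$ with $a^\top A^kB=0$'' is impossible for $a\neq0$, because then $a^\top$ annihilates $\begin{bmatrix}B & \cdots & A^{n-1}B\end{bmatrix}$. Perturbing $\eta$ within the left kernel is unavailable when that kernel is one-dimensional. Finally, a ``generic point'' argument needs one explicit good point, and producing that point is the whole content of the step.

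The missing idea is to choose $A$ according to $\eta$. Write $\eta(\lambda)\coloneqq\sum_{i=0}^n\lambda^i\eta_i$ and $\chi_A(\lambda)\coloneqq\sum_{i=0}^n d_i\lambda^i$. Unwinding $\zeta^\top D_A=\eta^\top$ with $\zeta_0=b$ and $\zeta_{k+1}^\top=a^\top A^kB$ gives
\[
\eta(\lambda)^\top=\chi_A(\lambda)\,b^\top+a^\top\operatorname{adj}(\lambda I-A)\,B,
\]
because $\sum_{k=0}^{n-1}\bigl(\sum_{j\geq0}d_{k+1+j}\lambda^j\bigr)A^k=\operatorname{adj}(\lambda I-A)$. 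For cyclic $A$ and $\lambda\in\spec A$, we have $\operatorname{adj}(\lambda I-A)=wv^\top$ for suitably scaled right and left eigenvectors $w,v$. Hence $\eta(\lambda)^\top=(a^\top w)(v^\top B)$, and $a^\top w\neq0$ by observability. So by the PBH test, $(A,B)$ is controllable \emph{if and only if} $\eta(\lambda)\neq0$ for all $\lambda\in\spec A$, whichever cyclic vector $a$ you use.

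Your degenerate case ($\zeta_1=\cdots=\zeta_n=0$, i.e.\ $B=0$) is simply $\eta(\lambda)=\chi_A(\lambda)\zeta_0$, which vanishes on all of $\spec A$. Since $\eta\neq0$, the components of $\eta(\lambda)$ have only finitely many common roots. Openness of $\mathcal{A}_\text{pk}$ then lets you pick $A$ cyclic with spectrum avoiding those roots, and no case distinction is needed.

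This is exactly the paper's route. It picks $A\in\mathcal{A}_\text{pk}$ and a small $\xi$ with $(A,\xi)$ controllable and $\eta(\lambda)\neq0$ on $\spec A$, and sets $B=\sum_iA^i\xi\eta_i^\top$. Controllability then follows from $v^\top B=(v^\top\xi)\,\eta(\lambda)^\top$, and Lemma~\ref{lem:contruction} supplies the rank-deficient trajectory. Your $B$ is in fact of this form with $\xi=O^{-1}e_n$, where $e_n$ is the last standard basis vector.
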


\subsection{Arbitrary sets of prior knowledge}
\label{sec:noise-free_arbitrary}

In this section, we consider the general case where the prior knowledge set $\Sigma_\text{pk}$ need not be open. For arbitrary prior knowledge, the following proposition provides a necessary and sufficient condition for $\Sigma_\text{pk}$--informativity for identification.

\begin{proposition}
\label{prop:data_best}
Suppose that $\varepsilon=0$. Let $T\in\mathbb{N}$, \linebreak$\mathcal{D}\!=\!(u_{[0,T-1]},x_{[0,T]})\!\in\!\mathfrak{B}_T(A_\text{true},\!B_\text{true})$ and $(\hat{A},\!\hat{B})\!\in\!\Sigma_\text{pk}\cap\Sigma_\mathcal{D}$. Then, $\mathcal{D}$ is $\Sigma_\text{pk}$--informative for identification \emph{if and only if}
\begin{equation}
\label{eq:rank_data_gen}
\begin{bmatrix}
\hat{A}-A & \hat{B}-B
\end{bmatrix}\begin{bmatrix}
\mathcal{H}_1(x_{[0,T-1]}) \\
\mathcal{H}_1(u_{[0,T-1]})
\end{bmatrix}\neq 0,
\end{equation}
for all $(A,B)\in\Sigma_\text{pk}\backslash\{(\hat{A},\hat{B})\}$.
\end{proposition}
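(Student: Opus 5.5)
Since $\varepsilon=0$, the set of data-consistent systems is exactly
\begin{equation}
\Sigma_\mathcal{D}=\set{(A,B)\in\Sigma}{x(t+1)=Ax(t)+Bu(t),\ t\in[0,T-1]}.
\end{equation}
Equivalently, $(A,B)\in\Sigma_\mathcal{D}$ if and only if $\begin{bmatrix} A & B\end{bmatrix}\begin{bmatrix}\mathcal{H}_1(x_{[0,T-1]})\\ \mathcal{H}_1(u_{[0,T-1]})\end{bmatrix}=\mathcal{H}_1(x_{[1,T]})$, so it is an affine set. The plan is to reduce informativity to the statement that $\Sigma_\mathcal{D}\cap\Sigma_\text{pk}=\{(\hat A,\hat B)\}$, using the footnote to Definition~\ref{def:inf_rho}: $\rad(\Sigma_\mathcal{D}\cap\Sigma_\text{pk})=0$ if and only if the intersection is a singleton. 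We know it is nonempty, since it contains $(\hat A,\hat B)$ by hypothesis and $(A_\text{true},B_\text{true})$ as well. Hence informativity is equivalent to $\Sigma_\mathcal{D}\cap\Sigma_\text{pk}=\{(\hat A,\hat B)\}$.

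The key step is a one-line characterization. Because $(\hat A,\hat B)\in\Sigma_\mathcal{D}$, any $(A,B)\in\Sigma$ lies in $\Sigma_\mathcal{D}$ if and only if
\begin{equation}
\begin{bmatrix}\hat A-A & \hat B-B\end{bmatrix}\begin{bmatrix}\mathcal{H}_1(x_{[0,T-1]})\\ \mathcal{H}_1(u_{[0,T-1]})\end{bmatrix}=0.
\end{equation}
This holds because both $[A\ B]$ and $[\hat A\ \hat B]$ would map the stacked data matrix to $\mathcal{H}_1(x_{[1,T]})$. Subtracting gives one direction; the converse is obtained by adding the displayed identity to the equation satisfied by $(\hat A,\hat B)$.

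The two directions then follow. If $\mathcal{D}$ is informative, the intersection is the singleton $\{(\hat A,\hat B)\}$. So no $(A,B)\in\Sigma_\text{pk}\backslash\{(\hat A,\hat B)\}$ lies in $\Sigma_\mathcal{D}$, which by the characterization is exactly \eqref{eq:rank_data_gen}. Conversely, if \eqref{eq:rank_data_gen} holds for every such $(A,B)$, the characterization shows none of them is in $\Sigma_\mathcal{D}$. Hence $\Sigma_\mathcal{D}\cap\Sigma_\text{pk}=\{(\hat A,\hat B)\}$, which is bounded with radius zero, so $\mathcal{D}$ is informative.

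The only subtle point is the footnote's equivalence, which requires the intersection to be bounded for $\rad$ to be defined. If informativity is read as requiring $\rad=0$, then the intersection must be bounded, and a bounded set of radius zero is a single point because all its elements lie within distance $0$ of a center. In the converse direction boundedness is automatic, since a singleton is bounded. There is no real obstacle here; the proof is essentially the affine-set observation above.
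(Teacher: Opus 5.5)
Your proof is correct and follows essentially the paper's argument. Both use the fact that, since $(\hat A,\hat B)\in\Sigma_\mathcal{D}$ and $\varepsilon=0$, a pair $(A,B)$ lies in $\Sigma_\mathcal{D}$ exactly when $\begin{bmatrix}\hat A-A & \hat B-B\end{bmatrix}$ annihilates the stacked data matrix, and then read off both directions (the paper phrases the ``only if'' part by contraposition), with your remark on boundedness and the singleton equivalence being a harmless clarification the paper leaves implicit.
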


Proposition~\ref{prop:data_best} generalizes the result of Proposition~\ref{prop:inf_noise-free_int} to the case of arbitrary prior knowledge. The condition in Proposition~\ref{prop:data_best} depends on $(\hat{A},\hat{B})$ that needs to be taken from $\Sigma_\text{pk}\cap\Sigma_\mathcal{D}$. To bypass this requirement, the following corollary provides a sufficient condition for $\Sigma_\text{pk}$--informativity for identification, which does not depend on $(\hat{A},\hat{B})$. 

\begin{corollary}
\label{cor:inf_noise-free_general}
Suppose that $\varepsilon=0$. Let $T\in\mathbb{N}$ and  \linebreak $\mathcal{D}=(u_{[0,T-1]},x_{[0,T]})\in\mathfrak{B}_T(A_\text{true},B_\text{true})$. Then,  $\mathcal{D}$ is \mbox{$\Sigma_\text{pk}$--informative} for identification if $\begin{bmatrix}
F & G
\end{bmatrix}\begin{bmatrix}
\mathcal{H}_1(x_{[0,T-1]}) \\
\mathcal{H}_1(u_{[0,T-1]})
\end{bmatrix}$ is nonzero for all nonzero $(F,G)\in\Sigma_\text{pk}-\Sigma_\text{pk}$.
\end{corollary}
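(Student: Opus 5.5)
The plan is to reduce Corollary~\ref{cor:inf_noise-free_general} to Proposition~\ref{prop:data_best} by choosing a suitable reference system. Since $\mathcal{D}\in\mathfrak{B}_T(A_\text{true},B_\text{true})$ and $(A_\text{true},B_\text{true})\in\Sigma_\text{pk}$, we have $(A_\text{true},B_\text{true})\in\Sigma_\text{pk}\cap\Sigma_\mathcal{D}$, so this intersection is nonempty. We therefore take $(\hat{A},\hat{B})\coloneqq(A_\text{true},B_\text{true})$ in Proposition~\ref{prop:data_best}; any other element of $\Sigma_\text{pk}\cap\Sigma_\mathcal{D}$ would serve equally well.

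Next, take an arbitrary $(A,B)\in\Sigma_\text{pk}\backslash\{(\hat{A},\hat{B})\}$ and set $F\coloneqq\hat{A}-A$ and $G\coloneqq\hat{B}-B$. Since both $(\hat{A},\hat{B})$ and $(A,B)$ belong to $\Sigma_\text{pk}$, the pair $(F,G)$ lies in the difference set $\Sigma_\text{pk}-\Sigma_\text{pk}$. Since $(A,B)\neq(\hat{A},\hat{B})$, the pair is nonzero. By hypothesis,
\begin{equation}
\begin{bmatrix} F & G\end{bmatrix}\begin{bmatrix}\mathcal{H}_1(x_{[0,T-1]})\\ \mathcal{H}_1(u_{[0,T-1]})\end{bmatrix}\neq 0,
\end{equation}
which is exactly condition~\eqref{eq:rank_data_gen}.

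Because $(A,B)$ was arbitrary, condition~\eqref{eq:rank_data_gen} holds for every $(A,B)\in\Sigma_\text{pk}\backslash\{(\hat{A},\hat{B})\}$. The ``if'' direction of Proposition~\ref{prop:data_best} then gives that $\mathcal{D}$ is $\Sigma_\text{pk}$--informative for identification.

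The only point needing care is that $\Sigma_\text{pk}\cap\Sigma_\mathcal{D}$ is nonempty, so that Proposition~\ref{prop:data_best} can be applied. This follows from the standing assumption that the data come from the true system, which lies in $\Sigma_\text{pk}$. Beyond that the argument is routine, so I expect no real obstacle.
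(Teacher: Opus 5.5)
Your proof is correct and matches the paper's intended route. The paper gives no separate proof and treats this as an immediate consequence of Proposition~\ref{prop:data_best}. As you do, one takes $(\hat{A},\hat{B})$ in the intersection $\Sigma_\text{pk}\cap\Sigma_\mathcal{D}$, which is nonempty because it contains $(A_\text{true},B_\text{true})$. Then every $(\hat{A}-A,\hat{B}-B)$ with $(A,B)\neq(\hat{A},\hat{B})$ is a nonzero element of $\Sigma_\text{pk}-\Sigma_\text{pk}$, so the hypothesis gives condition~\eqref{eq:rank_data_gen}.
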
\vspace{0.25 cm}

Even though the condition presented in Corollary~\ref{cor:inf_noise-free_general} is only sufficient, it is less conservative than the rank condition in Proposition~\ref{prop:inf_noise-free_int(c)}. Now, we turn our attention to finding $\Sigma_\text{pk}$--universal inputs for arbitrary $\Sigma_\text{pk}$. To study this, we introduce some notation. Let $k\in[1,n]$ and define
\begin{equation}
\label{eq:Sigma^{(k)}}
\Sigma^{(k)}\coloneqq\set{(A,B)\in\Sigma}{\deg(\mu_A)\leq k}.
\end{equation}
The set $\Sigma^{(k)}$ contains all matrix pairs $(A,B)\in\Sigma$ with the property that the degree of the minimal polynomial of $A$ is less than or equal to $k$. We note that $\Sigma^{(n)}=\Sigma$. For $(A,B)\in\Sigma^{(k)}$, we also define
\begin{subequations}
\label{eq:definitions}
\begin{align}
\label{eq:M_A,B^k}
M_{A,B}^{(k)}&\coloneqq \begin{bmatrix}
0 & B & AB & \cdots & A^{k-1} B \\
I_m & 0 & 0 & \cdots & 0
\end{bmatrix}, \\
\label{eq:d_A^k}
d_A^{(k)}&\coloneqq \begin{bmatrix}
d_0 & \cdots & d_{k-1} & d_k
\end{bmatrix}^\top,\\
\label{eq:D_A^k}
D_{A}^{(k)}&\coloneqq \begin{bmatrix}
d_0 & \cdots & d_{k-1} & d_k \\
d_1 & \cdots & d_k & 0 \\
\vdots & \iddots & \vdots & \vdots \\
d_k & \cdots & 0 & 0
\end{bmatrix}\otimes I_m.
\end{align}
\end{subequations}
where the entries $d_i$, $i\in[0,k]$, satisfy $\sum_{i=0}^{k}d_iA^i=0$, and $d_k=1$. We note that $d_A^{(n)}=d_A$, $D_A^{(n)}=D_A$ and $M_{A,B}^{(n)}=M_{A,B}$. Now, the following theorem presents a sufficient condition for an input to be $\Sigma_\text{pk}$--universal for identification, where $\Sigma_\text{pk}$ is an arbitrary set. 

\begin{theorem}
\label{th:GM_noiseless_generalized}
Suppose that $\varepsilon=0$. Let $k\in[1,n]$ be such that $\Sigma_\text{pk}\subseteq\Sigma^{(k)}$. Then, the input $u_{[0,T-1]}$ is $\Sigma_\textup{pk}$--universal for identification if 
\begin{equation}
\label{eq:th:GM_noiseless_generalized-1}
\begin{bmatrix}
F & G
\end{bmatrix}M_{A,B}^{(k)}D_A^{(k)}\mathcal{H}_{k+1}(u_{[0,T-1]})\neq 0
\end{equation}
for all nonzero $(F,G)\in\Sigma_\text{pk}-\Sigma_\text{pk}$ and all $(A,B)\in\Sigma_\text{pk}$.
\end{theorem}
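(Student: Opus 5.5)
The plan is to reduce the statement to Corollary~\ref{cor:inf_noise-free_general}. To do so, I will show that for any $(A,B)\in\Sigma_\text{pk}$ and any data $\mathcal{D}=(u_{[0,T-1]},x_{[0,T]})\in\mathfrak{B}(A,B,u_{[0,T-1]})$, the input-state data matrix times a suitable matrix built from $D_A^{(k)}$ equals $M_{A,B}^{(k)}D_A^{(k)}\mathcal{H}_{k+1}(u_{[0,T-1]})$. Concretely, the target identity is
\begin{equation}
\begin{bmatrix}
\mathcal{H}_1(x_{[0,T-k-1]}) \\
\mathcal{H}_1(u_{[0,T-k-1]})
\end{bmatrix}\text{-type combination}\;=\;M_{A,B}^{(k)}D_A^{(k)}\mathcal{H}_{k+1}(u_{[0,T-1]}),
\end{equation}
where the left-hand side is obtained by multiplying the data matrix $\begin{bmatrix}\mathcal{H}_1(x_{[0,T-1]})^\top & \mathcal{H}_1(u_{[0,T-1]})^\top\end{bmatrix}^\top$ on the right by some matrix $Q$. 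Granting this, the image of $M_{A,B}^{(k)}D_A^{(k)}\mathcal{H}_{k+1}(u)$ lies in the image of the data matrix. Hence, if $[F\;G]M^{(k)}_{A,B}D^{(k)}_A\mathcal{H}_{k+1}(u)\neq 0$, then $[F\;G]$ times the data matrix is nonzero. Applying this for every nonzero $(F,G)\in\Sigma_\text{pk}-\Sigma_\text{pk}$, Corollary~\ref{cor:inf_noise-free_general} gives $\Sigma_\text{pk}$--informativity, and since $(A,B)$ and $x(0)$ were arbitrary, universality follows.

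To establish the identity, take a column index $j\in[0,T-k-1]$ and form the combination $\sum_{i=0}^k d_i\begin{bmatrix}x(j+i)\\u(j+i)\end{bmatrix}$.

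For the state part, iterate $x(j+i)=A^ix(j)+\sum_{l=0}^{i-1}A^{i-1-l}Bu(j+l)$. Then
\[
\sum_i d_ix(j+i)=\Big(\sum_i d_iA^i\Big)x(j)+\sum_{i=1}^{k}\sum_{l=0}^{i-1}d_iA^{i-1-l}Bu(j+l).
\]
The first term vanishes because $d_A^{(k)}$ annihilates $A$, which holds since $\deg\mu_A\le k$. The $x(j)$ dependence therefore drops out, which is the key point: the initial state disappears.

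Next, regroup the double sum by the power $A^{p}$ with $p=i-1-l$. The coefficient of $A^pB$ is $\sum_{l}d_{l+p+1}u(j+l)$. This should match the $(p+1)$-th block row of $D_A^{(k)}$, namely the row $[d_{p+1},\dots,d_k,0,\dots]$, applied to the column $u_{[j,j+k]}$ of $\mathcal{H}_{k+1}(u)$. This is the Hankel structure of $D_A^{(k)}$.

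For the input part, $\sum_i d_iu(j+i)$ equals the first block row $[d_0,\dots,d_k]\otimes I_m$ applied to $u_{[j,j+k]}$. This matches the bottom block $[I_m\;0\cdots0]$ of $M^{(k)}_{A,B}$ picking the first block row of $D^{(k)}_A$. Likewise, the top block $[0\;B\;AB\cdots A^{k-1}B]$ picks block rows $2,\dots,k+1$ weighted by $A^{p}B$.

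Collecting columns then yields the data matrix times $Q=\mathcal{T}\otimes$ (a banded Toeplitz matrix with entries $d_0,\dots,d_k$) $= M^{(k)}_{A,B}D^{(k)}_A\mathcal{H}_{k+1}(u)$, as required. The main obstacle is purely the index bookkeeping in this regrouping, in particular checking that the block rows of $D_A^{(k)}$ align with the powers $A^pB$ in $M_{A,B}^{(k)}$. I would verify it first in the case $k=1$ and then write the general index substitution.
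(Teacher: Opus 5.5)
Your proposal is correct and is essentially the paper's proof. The paper applies Lemma~\ref{lem:Omega_Gamma_Theta} with $C=F$, $D=G$, $w_{[0,T-1]}=0$ to get $\begin{bmatrix} F & G\end{bmatrix}\begin{bmatrix}\mathcal{H}_1(x_{[0,T-1]})\\ \mathcal{H}_1(u_{[0,T-1]})\end{bmatrix}\sum_{i=0}^{k}d_iQ_i=\begin{bmatrix} F & G\end{bmatrix}M^{(k)}_{A,B}D^{(k)}_A\mathcal{H}_{k+1}(u_{[0,T-1]})$ and then invokes Corollary~\ref{cor:inf_noise-free_general}, as you do; the only difference is that you verify the underlying identity column by column instead of through the $\Omega,\Gamma,\Theta$ factorization of $\mathcal{H}_{k+1}$. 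Your index bookkeeping checks out, but the right factor is simply the scalar $T\times(T-k)$ banded Toeplitz matrix $\sum_{i=0}^{k}d_iQ_i$ with no Kronecker factor, so the garbled ``$Q=\mathcal{T}\otimes\cdots$'' line should be cleaned up.
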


Compared to Theorem~\ref{th:GM_noiseless}, Theorem~\ref{th:GM_noiseless_generalized} incorporates a broader class of prior knowledge. For instance, this theorem can take into account exact knowledge of the entries of $(A_\text{true},B_\text{true})$, which cannot be reflected in an open $\Sigma_\text{pk}$. Moreover, Theorem~\ref{th:GM_noiseless_generalized} incorporates prior knowledge on the degree of the minimal polynomial\footnote{This type of prior knowledge is relevant for systems with a known structure, e.g., a network of identical systems where $A_\text{true}$ is block diagonal, see \cite[Cor. 2]{yu2021controllability}.} of $A_\text{true}$. Such prior knowledge cannot be represented by an open $\Sigma_\text{pk}$. This is because an open $\Sigma_\text{pk}$ satisfies $\Sigma_\text{pk}\subseteq\Sigma^{(k)}$ only if $k=n$, which corresponds to the case where no knowledge of $\deg(\mu_{A_\text{true}})$ is available. 

The condition in Theorem~\ref{th:GM_noiseless_generalized} requires the number of samples to satisfy $T\geq k+1$, where $k\in[1,n]$ is an upper bound for $\deg(\mu_{A_\text{true}})$. The following example shows that, for some sets of prior knowledge, one can design a universal input with exactly $k+1$ samples.

\begin{example}
\label{ex:3}
Consider the set of prior knowledge as
\begin{equation}
\Sigma_\text{pk}=\set{\left(\begin{bmatrix}
0 & \alpha & \alpha \\
0 & 0        & 0 \\
0 & 0        & 0
\end{bmatrix},\begin{bmatrix}
0 \\ 0 \\ \beta
\end{bmatrix}\right)}{\alpha\in\mathbb{R},\beta\neq 0}.
\end{equation}
Every $(A,B)\in\Sigma_\text{pk}$ satisfies $A^2=0$. Hence, we take $k=2$ with $d_2=1$ and $d_1=d_0=0$ to have $\sum_{i=0}^2 d_iA^i=0$. Let $e_{i}$ be a vector with all entries equal to zero but the $i$th entry equal to $1$. We observe that every \mbox{$(F,G)\in\Sigma_\text{pk}-\Sigma_\text{pk}$} can be written as \mbox{$F=f\begin{bmatrix}
0 \!&\! e_1 \!&\! e_1
\end{bmatrix}$} and $G=ge_3$
for some $f,g\in\mathbb{R}$. Verify that \mbox{$\begin{bmatrix}
F \!\!&\!\! G
\end{bmatrix}M_{A,B}^{(2)}D_A^{(2)}=\begin{bmatrix}
0 \!&\! \beta fe_1 \!&\! ge_3
\end{bmatrix}$}. Take \mbox{$T=3$} and the input as $u(0)=0$, \mbox{$u(1)=u(2)=1$}. Observe that condition \eqref{th:GM_noiseless_generalized} reduces to \mbox{$\begin{bmatrix}
F & G
\end{bmatrix}M_{A,B}^{(2)}D_A^{(2)}\mathcal{H}_{3}(u_{[0,2]})=\begin{bmatrix}
\beta f & 0 & g
\end{bmatrix}^\top$}, which is zero only if $f=g=0$. Therefore, this input is \mbox{$\Sigma_\text{pk}$--universal} for identification.
\end{example}

\begin{remark}
\label{rem:uncont}
Recall from Proposition~\ref{prop:ctrl_nec} that if the set of prior knowledge is open and contains an uncontrollable pair, then it does not enable universal experiment design for identification. Interestingly, this is not necessarily true if the prior knowledge is not open. For instance, the set $\Sigma_\text{pk}$ in Example~\ref{ex:3} enables universal experiment design for identification, while all its members are uncontrollable. 
\end{remark}


The condition in Theorem~\ref{th:GM_noiseless_generalized} depends on the elements of $\Sigma_\text{pk}$ and $\Sigma_\text{pk}-\Sigma_\text{pk}$. In case $\Sigma_\text{pk}\subseteq\Sigma_\text{cont}$, a more conservative condition than that of Theorem~\ref{th:GM_noiseless_generalized} relaxes the dependency on $\Sigma_\text{pk}-\Sigma_\text{pk}$. This is discussed in the following corollary. 

\begin{corollary}
\label{cor:GM_noiseless_generalized_prime}
Suppose that $\varepsilon=0$ and $\Sigma_\text{pk}\subseteq\Sigma_\text{cont}$. Let $k\in[1,n]$ be such that $\Sigma_\text{pk}\subseteq\Sigma^{(k)}$. Then, the input $u_{[0,T-1]}$ is $\Sigma_\textup{pk}$--universal for identification if 
\begin{equation}
\label{eq:cor:GM_noiseless_generalized-1}
\rank M_{A,B}^{(k)}D_A^{(k)}\mathcal{H}_{k+1}(u_{[0,T-1]})=n+m
\end{equation}
for all $(A,B)\in\Sigma_\text{pk}$.
\end{corollary}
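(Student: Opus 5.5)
The plan is to derive the corollary from Theorem~\ref{th:GM_noiseless_generalized}: it suffices to show that \eqref{eq:cor:GM_noiseless_generalized-1} implies \eqref{eq:th:GM_noiseless_generalized-1} for every nonzero $(F,G)\in\Sigma_\text{pk}-\Sigma_\text{pk}$ and every $(A,B)\in\Sigma_\text{pk}$. Fix $(A,B)\in\Sigma_\text{pk}$ and set $N\coloneqq M_{A,B}^{(k)}D_A^{(k)}\mathcal{H}_{k+1}(u_{[0,T-1]})\in\mathbb{R}^{(n+m)\times(T-k)}$. By \eqref{eq:cor:GM_noiseless_generalized-1}, $N$ has full row rank $n+m$. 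Hence its left kernel is trivial, so $\begin{bmatrix} F & G\end{bmatrix}N=0$ forces $\begin{bmatrix} F & G\end{bmatrix}=0$.

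Equivalently, any nonzero $\begin{bmatrix} F & G\end{bmatrix}\in\mathbb{R}^{n\times(n+m)}$ has some nonzero row $r$, and $rN\neq 0$ by full row rank. This holds in particular for all nonzero $(F,G)\in\Sigma_\text{pk}-\Sigma_\text{pk}$. So \eqref{eq:th:GM_noiseless_generalized-1} holds for all such $(F,G)$ and all $(A,B)\in\Sigma_\text{pk}$. Since $\Sigma_\text{pk}\subseteq\Sigma^{(k)}$ by hypothesis, Theorem~\ref{th:GM_noiseless_generalized} then gives $\Sigma_\text{pk}$--universality.

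There is essentially no obstacle; the only point to check is well-definedness of the objects. $M_{A,B}^{(k)}$ and $D_A^{(k)}$ are defined for $(A,B)\in\Sigma^{(k)}$, with $d_i$ taken from a monic annihilating polynomial of degree $k$ (e.g.\ $\mu_A$ multiplied by a suitable power of $s$). The dimension count $n+m$ matches the number of rows of $M_{A,B}^{(k)}$, namely $n+m$. The controllability assumption is not needed for this implication. It only explains why the rank condition can hold at all: $M_{A,B}^{(k)}$ has full row rank exactly when $\rank\begin{bmatrix}B & \cdots & A^{k-1}B\end{bmatrix}=n$, which, since $\deg\mu_A\le k$, is equivalent to controllability.
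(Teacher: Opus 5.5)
Your proof is correct and follows the paper's intended route. The paper gives no separate proof; it presents this corollary as a more conservative version of Theorem~\ref{th:GM_noiseless_generalized}, and your argument is exactly that reduction: full row rank of $M_{A,B}^{(k)}D_A^{(k)}\mathcal{H}_{k+1}(u_{[0,T-1]})$ leaves only the zero vector in its left kernel, so \eqref{eq:th:GM_noiseless_generalized-1} holds for every nonzero $(F,G)$. Your side remark is also right: the rank condition itself forces $\rank\begin{bmatrix} B & \cdots & A^{k-1}B\end{bmatrix}=n$, so the hypothesis $\Sigma_\text{pk}\subseteq\Sigma_\text{cont}$ is implied by the rank condition rather than used in the argument.
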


It is evident that, under the hypothesis of Corollary~\ref{cor:GM_noiseless_generalized_prime}, if $\mathcal{H}_{k+1}(u_{[0,T-1]})$ has full row rank, then \eqref{eq:cor:GM_noiseless_generalized-1} holds for all $(A,B)\in\Sigma_\text{pk}$. This leads to another extension of Willems et al.'s fundamental lemma, stated next, which was initially observed in \cite[Cor. 1]{yu2021controllability}. 

\begin{corollary}
\label{cor:Willems_generalized}
Suppose that $\varepsilon=0$ and $\Sigma_\text{pk}\subseteq\Sigma_\text{cont}$. Let \mbox{$k\in[1,n]$} be such that $\Sigma_\text{pk}\subseteq\Sigma^{(k)}$. If $u_{[0,T-1]}$ is persistently exciting of order $k+1$, then it is $\Sigma_\text{pk}$--universal for identification.
\end{corollary}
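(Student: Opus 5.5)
The plan is to reduce Corollary~\ref{cor:Willems_generalized} to Corollary~\ref{cor:GM_noiseless_generalized_prime}. Fix any $(A,B)\in\Sigma_\text{pk}$. We need to show that \eqref{eq:cor:GM_noiseless_generalized-1} holds, i.e., that the product $M_{A,B}^{(k)}D_A^{(k)}\mathcal{H}_{k+1}(u_{[0,T-1]})$ has rank $n+m$. The argument examines the three factors in turn. By hypothesis $\mathcal{H}_{k+1}(u_{[0,T-1]})$ has full row rank $m(k+1)$, so right multiplication by it does not change the rank of any matrix with $m(k+1)$ columns.

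The matrix $D_A^{(k)}$ is the Kronecker product of a $(k+1)\times(k+1)$ upper anti-triangular Hankel matrix with $I_m$. The anti-diagonal entries of that Hankel matrix all equal $d_k=1$, so its determinant is $\pm1$. Hence $D_A^{(k)}$ is nonsingular and does not change rank either. The rank condition therefore reduces to $\rank M_{A,B}^{(k)}=n+m$.

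For the last factor, the block structure of $M_{A,B}^{(k)}$ (identity in the lower-left block, zeros elsewhere in the bottom row) gives
\begin{equation}
\rank M_{A,B}^{(k)} = m + \rank\begin{bmatrix} B & AB & \cdots & A^{k-1}B\end{bmatrix}.
\end{equation}
This is the one step with real content. Since $(A,B)\in\Sigma^{(k)}$, the minimal polynomial of $A$ has degree at most $k$. Hence every power $A^j$ with $j\geq k$ is a linear combination of $I,A,\ldots,A^{k-1}$. It follows that
\begin{equation}
\operatorname{im}\begin{bmatrix} B & \cdots & A^{k-1}B\end{bmatrix}=\operatorname{im}\begin{bmatrix} B & \cdots & A^{n-1}B\end{bmatrix}.
\end{equation}
The right-hand side equals $\mathbb{R}^n$ because $(A,B)$ is controllable. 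So $\rank M_{A,B}^{(k)}=n+m$.

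Combining the three steps, \eqref{eq:cor:GM_noiseless_generalized-1} holds for every $(A,B)\in\Sigma_\text{pk}$, and Corollary~\ref{cor:GM_noiseless_generalized_prime} yields universality. The only potential obstacle is the truncated-controllability step, and it is handled directly by the Cayley--Hamilton-type argument with the minimal polynomial.
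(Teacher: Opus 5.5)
Your proposal is correct and follows the paper's route. The paper simply remarks that full row rank of $\mathcal{H}_{k+1}(u_{[0,T-1]})$ makes \eqref{eq:cor:GM_noiseless_generalized-1} hold, so that Corollary~\ref{cor:GM_noiseless_generalized_prime} applies. You make explicit the two facts the paper treats as evident: $D_A^{(k)}$ is nonsingular, and $M_{A,B}^{(k)}$ has full row rank because controllability holds and $\deg(\mu_A)\leq k$.
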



We note that in case $\Sigma_\text{pk}\subseteq\Sigma_\text{cont}$ is open, the statements of Corollary~\ref{cor:GM_noiseless_generalized_prime} and Corollary~\ref{cor:Willems_generalized} coincide with that of Theorem~\ref{th:GM_noiseless} and Proposition~\ref{prop:Willems}, respectively. 

%% file: UED_noisy.tex
\section{Universal Experiment Design
With Noisy Data}
\label{sec:i-o gain}

This section considers universal experiment design in the presence of noise. In this setting, universal experiment design for  (exact) identification requires stringent conditions on the prior knowledge set. Nevertheless, finding universal inputs for $\rho$--accuracy identification, with given \mbox{$\rho>0$}, can be feasible under mild conditions on the prior knowledge set. As such, we first study universal experiment design for $\rho$--accuracy identification in Section~\ref{sub:UED_noisy_rho>0}. Universal experiment design for identification will be studied in detail in Section~\ref{sub:UED_noisy_rho=0}.  

\subsection{Experiment design for $\rho$--accuracy identification}
\label{sub:UED_noisy_rho>0}

The first step towards designing a $\Sigma_\text{pk}$--universal input for $\rho$--accuracy identification is to find conditions under which the data $\mathcal{D}$ is $\Sigma_\text{pk}$--informative for $\rho$--accuracy identification, i.e., $\rad(\Sigma_\mathcal{D}\cap\Sigma_\text{pk})\leq \rho$. Computing the exact value of \mbox{$\rad(\Sigma_\mathcal{D}\cap\Sigma_\text{pk})$}, however, can be a hard problem\footnote{Even for some convex sets, such as the intersection of balls, computing the Chebyshev radius is generally NP-hard, see \cite[Thm. 2]{xia2021chebyshev}}. Consequently, we focus on a conservative condition for \mbox{$\Sigma_\text{pk}$--informativity}, namely, $\rad\Sigma_\mathcal{D}\leq \rho$. Since \mbox{$\rad(\Sigma_\mathcal{D}\cap\Sigma_\text{pk})\leq\rad\Sigma_\mathcal{D}$}, we have that $\mathcal{D}$ is \mbox{$\Sigma_\text{pk}$--informative} for \mbox{$\rho$--accuracy} identification if $\rad\Sigma_\mathcal{D}\leq \rho$. 

We investigate how $\rad\Sigma_\mathcal{D}$ is related to the data $\mathcal{D}$. Given \mbox{$\mathcal{D}=(u_{[0,T-1]},x_{[0,T]})$}, we define
\begin{equation}
\sigma_\mathcal{D}\coloneqq\sigma_{n+m}\left(\begin{bmatrix}
    \mathcal{H}_1(x_{[0,T-1]}) \\
    \mathcal{H}_1(u_{[0,T-1]})
    \end{bmatrix}\right).
\end{equation}
We recall from Proposition~\ref{prop:inf_noise-free_int} that, in case $\varepsilon=0$, if \mbox{$\mathcal{D}\in\mathfrak{B}_T(A_\text{true},B_\text{true})$} satisfies $\sigma_\mathcal{D}>0$, then $\rad\Sigma_\mathcal{D}=0$, and hence the data is informative for identification. This, however, is not true when $\varepsilon>0$. In the noisy setting, $\sigma_\mathcal{D}>0$ only implies that $\Sigma_\mathcal{D}$ is bounded \cite[Prop. 14(b)]{shakouri2025chebyshev}. We note that, yet there is no general closed-form expression for $\rad\Sigma_\mathcal{D}$ in the literature\footnote{In special cases, there exist algorithmic methods that can approximate $\rad\Sigma_\mathcal{D}$, see, e.g., \cite{beck2007regularization}}. Nevertheless, one can find upper and lower bounds for $\rad\Sigma_\mathcal{D}$ in terms of $\sigma_\mathcal{D}$ and $\varepsilon$, which is shown by the following lemma. 

\begin{lemma}
\label{lem:noisy_rad_lower}
Let $\varepsilon\geq 0$, $T\in\mathbb{N}$, and $(A,B)\in\Sigma$. Then, the following statements hold:
\begin{enumerate}[label=(\alph*),ref=\ref{lem:noisy_rad_lower}(\alph*)]
    \item\label{lem:noisy_rad_lower(a)} Let $\mathcal{D}\in\mathfrak{B}_T(A,B)$. Then, $\Sigma_\mathcal{D}$ is bounded \emph{if and only if} $\sigma_\mathcal{D}>0$. Moreover, if $\sigma_\mathcal{D}>0$, then $\rad\Sigma_\mathcal{D}\leq \frac{\sqrt{T}\varepsilon}{\sigma_\mathcal{D}}$.
    \item\label{lem:noisy_rad_lower(b)} For every $u_{[0,T-1]}$ there exists $\mathcal{D}\in\mathfrak{B}(A,B,u_{[0,T-1]})$ such that either $\Sigma_\mathcal{D}$ is unbounded, or $\sigma_\mathcal{D}>0$ and $\rad\Sigma_\mathcal{D}\geq \frac{\varepsilon}{\sigma_\mathcal{D}}$.
\end{enumerate}
\end{lemma}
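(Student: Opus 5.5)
Write $Z\coloneqq\begin{bmatrix}\mathcal{H}_1(x_{[0,T-1]})\\ \mathcal{H}_1(u_{[0,T-1]})\end{bmatrix}\in\mathbb{R}^{(n+m)\times T}$ and $X_+\coloneqq\mathcal{H}_1(x_{[1,T]})$. The plan rests on the explicit description
\[
\Sigma_\mathcal{D}=\set{(A,B)}{\norm{X_+(:,t)-\begin{bmatrix}A & B\end{bmatrix}Z(:,t)}\leq\varepsilon\ \text{for all } t},
\]
which is a convex set containing the true pair. Every estimate below is driven by the singular value $\sigma_\mathcal{D}=\sigma_{n+m}(Z)$.

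For the boundedness claim in (a), suppose first that $\sigma_\mathcal{D}>0$. Take two elements $S_1,S_2$ of $\Sigma_\mathcal{D}$, viewed as matrices $\begin{bmatrix}A & B\end{bmatrix}$, and set $\Delta=S_1-S_2$. Each column of $\Delta Z$ has norm at most $2\varepsilon$, so $\norm{\Delta Z}\leq\norm{\Delta Z}_F\leq 2\sqrt{T}\varepsilon$. Since $Z$ has full row rank, $\norm{\Delta}\leq\norm{\Delta Z}/\sigma_\mathcal{D}\leq 2\sqrt T\varepsilon/\sigma_\mathcal{D}$, and boundedness follows. Conversely, if $\sigma_\mathcal{D}=0$, pick a nonzero $\xi$ with $\xi^\top Z=0$. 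Then $\begin{bmatrix}A_\text{true} & B_\text{true}\end{bmatrix}+\lambda\eta\xi^\top\in\Sigma_\mathcal{D}$ for every $\lambda\in\mathbb{R}$ and any fixed $\eta\neq 0$, so $\Sigma_\mathcal{D}$ is unbounded.

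The radius bound in (a) needs a better estimate than the diameter argument, which only gives radius $\leq$ diameter. Use the least-squares center $C=X_+Z^\dagger$. For any $S\in\Sigma_\mathcal{D}$ we have $S-C=(S Z-X_+)Z^\dagger$, because $ZZ^\dagger=I$ under full row rank. Hence $\norm{S-C}\leq\norm{SZ-X_+}\,\norm{Z^\dagger}\leq\sqrt T\varepsilon/\sigma_\mathcal{D}$.

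There is one subtlety: the definition of $\rad$ takes the infimum over centers inside $\mathcal{X}$, and $C$ need not lie in $\Sigma_\mathcal{D}$. I expect this to be the main obstacle. One repair is to use the infimum over all of $\mathbb{R}^{n\times(n+m)}$ as an intermediate quantity. Alternatively, take a true center $C'\in\Sigma_\mathcal{D}$ and use that $\Sigma_\mathcal{D}$ is convex and closed. If needed, I would verify the bound directly by choosing the center $S_0=\begin{bmatrix}A & B\end{bmatrix}\in\Sigma_\mathcal{D}$ with the given noise realisation and estimating more carefully. A cleaner route is to cite \cite[Prop. 14]{shakouri2025chebyshev}, which I expect already gives boundedness and a bound of this type.

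For (b), given $u_{[0,T-1]}$, I choose the zero initial state and the zero noise. This gives a trajectory $\mathcal{D}$ with $X_+=\begin{bmatrix}A & B\end{bmatrix}Z$. If $\sigma_\mathcal{D}=0$, then $\Sigma_\mathcal{D}$ is unbounded by (a). Otherwise, let $v$ be the unit right singular vector of $Z^\top$, i.e. the unit vector with $\norm{Z^\top v}=\sigma_\mathcal{D}$. Take a unit $\eta\in\mathbb{R}^n$ and set $S_\pm=\begin{bmatrix}A & B\end{bmatrix}\pm(\varepsilon/\sigma_\mathcal{D})\eta v^\top$. Each column residual equals $(\varepsilon/\sigma_\mathcal{D})\eta\,(v^\top Z(:,t))$, and its norm is at most $(\varepsilon/\sigma_\mathcal{D})\norm{Z^\top v}=\varepsilon$. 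So $S_\pm\in\Sigma_\mathcal{D}$, and $\norm{S_+-S_-}=2\varepsilon/\sigma_\mathcal{D}$.

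The radius of a set is at least half its diameter, because by the triangle inequality any center $C$ satisfies $\max\norm{S_\pm-C}\geq\tfrac12\norm{S_+-S_-}$. This gives $\rad\Sigma_\mathcal{D}\geq\varepsilon/\sigma_\mathcal{D}$, completing (b).
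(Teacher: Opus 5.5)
Your arguments for (b) and for the boundedness equivalence in (a) are correct. The one genuine gap is the step you flag yourself, and none of your proposed repairs closes it. The issue is that \eqref{eq:cheb} restricts centers to $\Sigma_\mathcal{D}$, so the estimate $\norm{S-X_+Z^\dagger}\le\sqrt{T}\varepsilon/\sigma_\mathcal{D}$ for all $S\in\Sigma_\mathcal{D}$ does not bound $\rad\Sigma_\mathcal{D}$, because $X_+Z^\dagger$ need not lie in $\Sigma_\mathcal{D}$. Your repairs fail for the following reasons:
\begin{itemize}
\item The infimum over all centers is a \emph{lower} bound for the restricted radius, so it points the wrong way.
\item Placing a Chebyshev center of a closed convex set inside the set is a Hilbert-space fact, and the spectral norm on $\mathbb{R}^{n\times(n+m)}$ is not a Hilbert norm for $n\ge 2$.
\item Taking the true pair $S_0$ as center only gives $2\sqrt{T}\varepsilon/\sigma_\mathcal{D}$, since the columns of $(S-S_0)Z$ can have norm $2\varepsilon$.
\item The paper uses the result you propose to cite only for boundedness.
\end{itemize}
A short fix is to use the \emph{constrained} least-squares estimate. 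Let $C$ minimize $\norm{X_+-SZ}_F^2$ over the nonempty, convex, closed and (by your first step) bounded set $\Sigma_\mathcal{D}$. First-order optimality gives $\langle X_+-CZ,(S-C)Z\rangle_F\le 0$ for all $S\in\Sigma_\mathcal{D}$. Expanding $X_+-SZ=(X_+-CZ)-(S-C)Z$ then yields
\[
T\varepsilon^2\ge\norm{X_+-SZ}_F^2\ge\norm{X_+-CZ}_F^2+\norm{(S-C)Z}_F^2\ge\norm{(S-C)Z}_F^2 .
\]
Hence $\norm{S-C}=\norm{(S-C)ZZ^\dagger}\le\norm{(S-C)Z}_F/\sigma_\mathcal{D}\le\sqrt{T}\varepsilon/\sigma_\mathcal{D}$ with $C\in\Sigma_\mathcal{D}$, which is the claim under \eqref{eq:cheb}.

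Otherwise your proof is an elementary unpacking of the paper's. For the radius bound in (a), the paper encloses $\Sigma_\mathcal{D}$ in $\mathcal{Z}_{n+m}(N_\text{out})$, which consists of the $S^\top$ with $\norm{X_+-SZ}\le\sqrt{T}\varepsilon$. Lemma~\ref{lem:radcent_qmi} then gives this set the center $-N_{22}^{-1}N_{12}^\top=(X_+Z^\dagger)^\top$ and radius at most $\sqrt{T}\varepsilon/\sigma_\mathcal{D}$, which is exactly your estimate. The paper then writes $\rad\Sigma_\mathcal{D}\le\rad\mathcal{Z}_{n+m}(N_\text{out})$. This is monotonicity of $\rad$ under inclusion: valid for the ``smallest enclosing ball'' reading in the paper's prose, but precisely the step you questioned, so the paper supplies no extra ingredient there.

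For (b), the paper computes the radius of the inner set $\mathcal{Z}_{n+m}(N_\text{in})$ via Schur complements. Your $S_\pm$ are the endpoints of that set's longest axis, and your half-diameter argument is shorter and works for restricted centers directly.

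Your unboundedness argument for $\sigma_\mathcal{D}=0$ is more robust than the paper's. The paper asserts that $\mathcal{Z}_{n+m}(N_\text{in})$ is unbounded, but for noisy data this set can be empty. For example, take $n=m=1$, $(A,B)=(0,0)$, $x(0)=0$, $u_{[0,2]}=(0,\varepsilon,0)$, $w_{[0,2]}=(\varepsilon,0,\varepsilon)$. Translating the true pair along $\eta\xi^\top$ needs no such assumption.
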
\vspace{0.1 cm}

Lemma~\ref{lem:noisy_rad_lower(a)} presents a way to over-approximate $\rad\Sigma_\mathcal{D}$ as long as the dataset satisfies $\sigma_\mathcal{D}>0$. The lower bound provided in Lemma~\ref{lem:noisy_rad_lower(b)}, however, only holds for some datasets. Both approximations are proportional to a \emph{signal-to-noise ratio} $\frac{\varepsilon}{\sigma_\mathcal{D}}$. 

Now, for given $\rho>0$, we aim at finding inputs such that, when applied to a system, yield data satisfying \mbox{$\rad\Sigma_\mathcal{D}\leq \rho$}. The following lemma provides a sufficient condition for finding such inputs. For $A\in\mathbb{R}^{n\times n}$, we recall that $\deg(\mu_A)$ refers to the degree of its minimal polynomial. We also define
\begin{equation}
\label{eq:M_A^k}
M_A^{(k)}\coloneqq \begin{bmatrix}
0 & I_n & A & \cdots & A^{k-1} \\
I_n & 0 & 0 & \cdots & 0
\end{bmatrix}.
\end{equation}

\begin{lemma}
\label{lem:open_design_noisy}
Suppose that $\varepsilon>0$. Let \mbox{$(A,B)\in\Sigma_\text{cont}$}, \mbox{$k\in[\deg(\mu_A),n]$}, and $\rho>0$. Then, $\rad\Sigma_\mathcal{D}\leq \rho$ for all \mbox{$\mathcal{D}\in\mathfrak{B}(A,B,u_{[0,T-1]})$} if 
\begin{equation}
\label{eq:th:open_design_noisy_main}
\begin{split}
&\sigma_{n+m}(M^{(k)}_{A,B}D_{\!A}^{(k)}\mathcal{H}_{k+1}(u_{[0,T-1]}))\\
&\geq\varepsilon\sqrt{T(k+1)}\|d_A^{(k)}\|_1\|M_{A}^{(k)}\|+\frac{\varepsilon}{\rho}\sqrt{T}\|d_A^{(k)}\|_1.
\end{split}
\end{equation}
\end{lemma}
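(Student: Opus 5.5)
The plan is to reduce everything to Lemma~\ref{lem:noisy_rad_lower(a)}. It suffices to show that every $\mathcal{D}=(u_{[0,T-1]},x_{[0,T]})\in\mathfrak{B}(A,B,u_{[0,T-1]})$ satisfies $\sigma_\mathcal{D}\geq \frac{\sqrt{T}\varepsilon}{\rho}$. Indeed, since $\varepsilon,\rho>0$, this gives $\sigma_\mathcal{D}>0$, and the lemma then yields $\rad\Sigma_\mathcal{D}\leq \frac{\sqrt{T}\varepsilon}{\sigma_\mathcal{D}}\leq\rho$. So we fix such a $\mathcal{D}$ together with a noise sequence $w$ satisfying \eqref{eq:1} and \eqref{eq:ass1}. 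The task is then to lower bound $\sigma_\mathcal{D}$ in terms of the input-only matrix $M^{(k)}_{A,B}D_A^{(k)}\mathcal{H}_{k+1}(u_{[0,T-1]})$. Note that $k\geq\deg(\mu_A)$ guarantees that coefficients $d_0,\dots,d_k$ with $d_k=1$ and $\sum_{i=0}^k d_iA^i=0$ exist.

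\emph{Key identity.} Iterating \eqref{eq:1} gives
\begin{equation}
x(t+j)=A^jx(t)+\sum_{i=0}^{j-1}A^{j-1-i}\bigl(Bu(t+i)+w(t+i)\bigr).
\end{equation}
Multiply by $d_j$ and sum over $j\in[0,k]$; the term $\sum_j d_jA^jx(t)$ vanishes. Expanding the block-Hankel structure of $D_A^{(k)}$ column by column, I expect to verify that the $t$-th column of $M^{(k)}_{A,B}D_A^{(k)}\mathcal{H}_{k+1}(u_{[0,T-1]})$, for $t\in[0,T-k-1]$, equals
\begin{equation}
\begin{bmatrix}\sum_{j}d_jx(t+j)\\ \sum_j d_ju(t+j)\end{bmatrix}-\begin{bmatrix}N_t\\0\end{bmatrix},
\end{equation}
where $N_t$ is the top block of $M_A^{(k)}(D^{(k)}_{A}\text{ with }I_n\text{ in place of }I_m)\,w_{[t,t+k]}$. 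In matrix form this reads
\begin{equation}
M^{(k)}_{A,B}D_A^{(k)}\mathcal{H}_{k+1}(u_{[0,T-1]})=\begin{bmatrix}\mathcal{H}_1(x_{[0,T-1]})\\ \mathcal{H}_1(u_{[0,T-1]})\end{bmatrix}S-\begin{bmatrix}N\\0\end{bmatrix}.
\end{equation}
Here $S\in\mathbb{R}^{T\times(T-k)}$ is the banded Toeplitz matrix whose $t$-th column carries $d_0,\dots,d_k$ in rows $t,\dots,t+k$, and $N$ equals the top block of $M_A^{(k)}\bigl(\tilde D_A^{(k)}\bigr)\mathcal{H}_{k+1}(w_{[0,T-1]})$ with $\tilde D_A^{(k)}$ the $I_n$-version of $D_A^{(k)}$. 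Checking this bookkeeping is the main obstacle, namely the index matching between the Hankel rows of $D^{(k)}_A$ and the convolution sums.

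\emph{Estimates.} By Weyl's inequality for singular values, $\sigma_{n+m}(PS-Q)\leq\sigma_{n+m}(P)\|S\|+\|Q\|$. We then bound each factor:
\begin{itemize}
\item $\|S\|\leq\|d_A^{(k)}\|_1$, since $S$ is a sum of $k+1$ shifted partial identities scaled by $d_j$;
\item $\|\tilde D_A^{(k)}\|\leq\|d_A^{(k)}\|_1$, by the same argument for the Hankel band;
\item $\|\mathcal{H}_{k+1}(w_{[0,T-1]})\|\leq\|\mathcal{H}_{k+1}(w_{[0,T-1]})\|_F\leq\varepsilon\sqrt{(k+1)T}$, using \eqref{eq:ass1}.
\end{itemize}
Together these give $\|N\|\leq\varepsilon\sqrt{T(k+1)}\|d_A^{(k)}\|_1\|M_A^{(k)}\|$. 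Substituting into \eqref{eq:th:open_design_noisy_main} yields $\sigma_\mathcal{D}\|d_A^{(k)}\|_1\geq\frac{\varepsilon}{\rho}\sqrt{T}\|d_A^{(k)}\|_1$. Since $d_k=1$, we have $\|d_A^{(k)}\|_1\geq 1>0$, so we may divide to obtain $\sigma_\mathcal{D}\geq\varepsilon\sqrt{T}/\rho$, which concludes the argument via Lemma~\ref{lem:noisy_rad_lower(a)}.
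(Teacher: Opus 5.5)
Your proposal is correct and follows essentially the same route as the paper. Your hand-derived identity (the index bookkeeping you flagged does check out) is exactly Lemma~\ref{lem:Omega_Gamma_Theta} applied with $C=\begin{bmatrix} I_n & 0\end{bmatrix}^\top$ and $D=\begin{bmatrix} 0 & I_m\end{bmatrix}^\top$, with your $S$ equal to the paper's $\sum_{i=0}^k d_iQ_i$; the paper then uses the same Weyl-type singular value inequalities, the same bounds $\|S\|,\|\tilde D_A^{(k)}\|\leq\|d_A^{(k)}\|_1$ and $\|\mathcal{H}_{k+1}(w_{[0,T-1]})\|\leq\varepsilon\sqrt{T(k+1)}$ (you obtain the last via the Frobenius norm rather than a Kronecker factorization), and concludes via Lemma~\ref{lem:noisy_rad_lower(a)}.
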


Condition \eqref{eq:th:open_design_noisy_main} requires the smallest singular value of $M^{(k)}_{A,B}D_{\!A}^{(k)}\mathcal{H}_{k+1}(u_{[0,T-1]})$ to be larger than or equal to the sum of two terms $\varepsilon\sqrt{T(k+1)}\|d_A^{(k)}\|_1\|M_{A}^{(k)}\|$ and $\frac{\varepsilon}{\rho}\sqrt{T}\|d_A^{(k)}\|_1$. The first term guarantees that $\Sigma_\mathcal{D}$ is bounded, i.e., if $u_{[0,T-1]}$ satisfies
\begin{equation}
\label{eq:th:open_design_noisy_main(bounded)}
\begin{split}
&\sigma_{n+m}(M^{(k)}_{A,B}D_{\!A}^{(k)}\mathcal{H}_{k+1}(u_{[0,T-1]}))\\
&>\varepsilon\sqrt{T(k+1)}\|d_A^{(k)}\|_1\|M_{A}^{(k)}\|,
\end{split}
\end{equation}
then $\Sigma_\mathcal{D}$ is bounded for all \mbox{$\mathcal{D}\in\mathfrak{B}(A,B,u_{[0,T-1]})$}. This is because \eqref{eq:th:open_design_noisy_main(bounded)} implies that \eqref{eq:th:open_design_noisy_main} holds for a sufficiently large $\rho>0$. In case $\varepsilon=0$, \eqref{eq:th:open_design_noisy_main(bounded)} boils down to~\eqref{eq:cor:GM_noiseless_generalized-1}. 

Lemma~\ref{lem:open_design_noisy} now leads to the following sufficient condition for an input to be $\Sigma_\text{pk}$--universal for \mbox{$\rho$--accuracy} identification. 

\begin{theorem}
\label{th:open_design_noisy}
Suppose that $\varepsilon>0$ and $\Sigma_\text{pk}\subseteq\Sigma_\text{cont}$. Let $\rho>0$ and $k\in[1,n]$ be such that $\Sigma_\text{pk}\subset\Sigma^{(k)}$. Then, $u_{[0,T-1]}$ is $\Sigma_\text{pk}$--universal for $\rho$--accuracy identification if \eqref{eq:th:open_design_noisy_main} holds for all $(A,B)\in\Sigma_\text{pk}$. 
\end{theorem}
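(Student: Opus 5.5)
This theorem should follow almost directly from Lemma~\ref{lem:open_design_noisy} combined with the monotonicity of the Chebyshev radius under intersection. The plan is to fix an arbitrary dataset produced by the candidate input and bound its informativity.

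First, take any
\begin{equation}
\mathcal{D}\in\bigcup_{(A,B)\in\Sigma_\text{pk}}\mathfrak{B}(A,B,u_{[0,T-1]}).
\end{equation}
By definition there is a pair $(A,B)\in\Sigma_\text{pk}$ with $\mathcal{D}\in\mathfrak{B}(A,B,u_{[0,T-1]})$. Since $\Sigma_\text{pk}\subseteq\Sigma_\text{cont}$, this pair is controllable. Since $\Sigma_\text{pk}\subseteq\Sigma^{(k)}$, we have $\deg(\mu_A)\leq k$, so $k\in[\deg(\mu_A),n]$.

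Second, we need the polynomial coefficients to match. In Lemma~\ref{lem:open_design_noisy}, $d_A^{(k)}$ and $D_A^{(k)}$ are built from coefficients $d_0,\dots,d_k$ with $d_k=1$ and $\sum_i d_iA^i=0$. This is the same construction as in \eqref{eq:definitions}. Any choice made for the theorem's hypothesis can therefore be reused in the lemma. The only care point is that when $\deg(\mu_A)<k$ such coefficients are not unique; we take the hypothesis \eqref{eq:th:open_design_noisy_main} to refer to the same choice. Then all hypotheses of Lemma~\ref{lem:open_design_noisy} hold for this $(A,B)$, and the lemma gives $\rad\Sigma_\mathcal{D}\leq\rho$. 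In particular, $\Sigma_\mathcal{D}$ is bounded.

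Third, we pass to the intersection with the prior knowledge. Because $\mathcal{D}$ is generated by $(A,B)\in\Sigma_\text{pk}$, the set $\Sigma_\mathcal{D}\cap\Sigma_\text{pk}$ contains $(A,B)$ and is nonempty. It is also bounded, being a subset of $\Sigma_\mathcal{D}$. So its Chebyshev radius is defined, and we show $\rad(\Sigma_\mathcal{D}\cap\Sigma_\text{pk})\leq\rad\Sigma_\mathcal{D}$.

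This monotonicity is slightly subtle because \eqref{eq:cheb} restricts centers to lie in the set itself, so it is not immediate. I would prove it with a factor-of-two-free argument, following the paper's earlier claim that $\rad(\Sigma_\mathcal{D}\cap\Sigma_\text{pk})\leq\rad\Sigma_\mathcal{D}$. If that claim is unavailable, one would instead establish the needed bound from the specific structure of $\Sigma_\mathcal{D}$. This is the main obstacle; everything else is bookkeeping.

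Concluding, $\rad(\Sigma_\mathcal{D}\cap\Sigma_\text{pk})\leq\rho$, so $\mathcal{D}$ is $\Sigma_\text{pk}$--informative for $\rho$--accuracy identification. Since $\mathcal{D}$ was arbitrary, the input is $\Sigma_\text{pk}$--universal for $\rho$--accuracy identification.
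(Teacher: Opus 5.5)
Your proposal follows the paper's proof. Lemma~\ref{lem:open_design_noisy} gives $\rad\Sigma_\mathcal{D}\leq\rho$ for every dataset produced by a member of $\Sigma_\text{pk}$, and $\rad(\Sigma_\mathcal{D}\cap\Sigma_\text{pk})\leq\rad\Sigma_\mathcal{D}$ finishes it.

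The step you leave open is exactly the inequality the paper asserts without proof. It is immediate once $\rad$ is read as the radius of the smallest enclosing ball, with centers ranging over all of $\mathbb{R}^{n\times(n+m)}$. This is how the paper describes the Chebyshev radius in words. It is also how Lemma~\ref{lem:noisy_rad_lower(a)} is obtained from $\Sigma_\mathcal{D}\subseteq\mathcal{Z}_{n+m}(N_\text{out})$. The argument is: for each fixed $C$, $\sup_{X\in\Sigma_\mathcal{D}\cap\Sigma_\text{pk}}\norm{C-X}\leq\sup_{X\in\Sigma_\mathcal{D}}\norm{C-X}$; then take the infimum over $C$.

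Your worry about the literal formula \eqref{eq:cheb} is justified. With centers restricted to the set, monotonicity fails: in $\mathbb{R}$, the sets $\{-1,1\}\subset\{-1,0,1\}$ have radii $2$ and $1$. In that reading, however, your fallback via the structure of $\Sigma_\mathcal{D}$ cannot work either. $\Sigma_\mathcal{D}$ can be a parallelogram (e.g.\ $n=m=1$, $T=2$), whose diameter is twice its radius. An arbitrary $\Sigma_\text{pk}$ may meet it exactly in the two endpoints of a diameter. Then only the bound $2\rho$ follows from $\rad\Sigma_\mathcal{D}\leq\rho$.

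So drop the hedge and adopt the enclosing-ball reading. With the one-line argument above, your proof is complete and matches the paper's.
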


Theorem~\ref{th:open_design_noisy} requires all systems complying with the prior knowledge to be controllable. This is because \eqref{eq:th:open_design_noisy_main} cannot be satisfied for an uncontrollable system. Nevertheless, satisfying \eqref{eq:th:open_design_noisy_main} for all $(A,B)\in\Sigma_\text{pk}$ requires a stronger condition on $\Sigma_\text{pk}$ than $\Sigma_\text{pk}\subseteq\Sigma_\text{cont}$. To further investigate this, we define
\begin{equation}
\label{eq:gamma_def}
\gamma_\textup{pk}\!\coloneqq\! \inf\set{\sigma_n\left(\begin{bmatrix}
B \!\!&\!\! AB \!\!&\!\! \cdots \!\!&\!\! A^{n-1}B
\end{bmatrix}\right)}{(A,B)\!\in\!\Sigma_\textup{pk}}\!.
\end{equation}
This can be viewed as a measure of controllability for systems within the prior knowledge set. The next theorem characterizes all bounded sets of prior knowledge for which Theorem~\ref{th:open_design_noisy} leads to a universal input. Consequently, it provides a sufficient condition under which universal experiment design for \mbox{$\rho$--accuracy} identification is feasible. 

\begin{theorem}
\label{th:open_enabling_noisy}
Suppose that $\Sigma_\textup{pk}$ is bounded. Let $\rho>0$ and $k\in[1,n]$ be such that $\Sigma_\text{pk}\subset\Sigma^{(k)}$. Then, the following statements hold:
\begin{enumerate}[label=(\alph*),ref=\ref{th:open_enabling_noisy}(\alph*)]
    \item\label{th:open_enabling_noisy(a)} There exists $T\in\mathbb{N}$ and $u_{[0,T-1]}\in\mathbb{R}^{mT}$ such that \eqref{eq:th:open_design_noisy_main} holds for all $(A,B)\in\Sigma_\text{pk}$ \emph{if and only if} $\gamma_\textup{pk}>0$.
    \item\label{th:open_enabling_noisy(b)} If $\gamma_\textup{pk}>0$, then $\Sigma_\text{pk}$ enables universal experiment design for $\rho$--accuracy identification.
\end{enumerate}
\end{theorem}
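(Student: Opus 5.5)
The plan for (a) is to prove the two directions separately, using the block structure of $M^{(k)}_{A,B}$ and the boundedness of $\Sigma_\text{pk}$. Statement (b) then follows from (a) and Theorem~\ref{th:open_design_noisy}: $\gamma_\text{pk}>0$ forces $\operatorname{rank}\begin{bmatrix}B & \cdots & A^{n-1}B\end{bmatrix}=n$ for every $(A,B)\in\Sigma_\text{pk}$, so $\Sigma_\text{pk}\subseteq\Sigma_\text{cont}$.

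\textbf{Preliminary facts.} Write $C_j\coloneqq\begin{bmatrix}B & AB & \cdots & A^{j-1}B\end{bmatrix}$.
\begin{enumerate}[label=(\roman*)]
\item The two column blocks of $M^{(k)}_{A,B}$ have disjoint supports and act on orthogonal row blocks. Hence its singular values are those of $C_k$ together with $m$ ones, and
\begin{equation}
\sigma_{n+m}(M^{(k)}_{A,B})=\min\{1,\sigma_n(C_k)\}.
\end{equation}
\item Since $\Sigma_\text{pk}\subseteq\Sigma^{(k)}$, each $A^jB$ with $j\geq k$ is a combination of $B,\ldots,A^{k-1}B$ with coefficients polynomial in $d_0,\ldots,d_{k-1}$. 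Thus $C_n=C_kL$ for a matrix $L$ whose norm is bounded uniformly over $\Sigma_\text{pk}$, and $\sigma_n(C_n)\leq\|L\|\,\sigma_n(C_k)$.
\item The $d_i$ are coefficients of a degree-$k$ annihilating polynomial whose roots are eigenvalues of $A$. Boundedness of $\Sigma_\text{pk}$ bounds these roots, so $\|d_A^{(k)}\|_1$, $\|D_A^{(k)}\|$ and $\|M_A^{(k)}\|$ are uniformly bounded. Moreover, $D_A^{(k)}$ is anti-triangular with unit anti-diagonal, so $|\det D_A^{(k)}|=1$. This gives $\sigma_{\min}(D_A^{(k)})\geq \|D_A^{(k)}\|^{-(m(k+1)-1)}$, which is uniformly bounded below.
\end{enumerate}

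\textbf{Sufficiency.} Assume $\gamma_\text{pk}>0$. Fix any $v_{[0,T-1]}$ that is persistently exciting of order $k+1$ and set $u=c\,v$ with $c>0$. The product of the three factors in \eqref{eq:th:open_design_noisy_main} gives
\begin{equation}
\sigma_{n+m}(M^{(k)}_{A,B}D^{(k)}_A\mathcal{H}_{k+1}(u))\geq c\,\sigma_{n+m}(M^{(k)}_{A,B})\,\sigma_{\min}(D_A^{(k)})\,\sigma_{m(k+1)}(\mathcal{H}_{k+1}(v)).
\end{equation}
By (i) and (ii), $\sigma_{n+m}(M^{(k)}_{A,B})\geq\min\{1,\gamma_\text{pk}/\sup\|L\|\}>0$. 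By (iii), $\sigma_{\min}(D_A^{(k)})$ is uniformly bounded below, and $\sigma_{m(k+1)}(\mathcal{H}_{k+1}(v))>0$ is fixed. So the left-hand side is at least $c\,\kappa$ for some $\kappa>0$ independent of $(A,B)$. The right-hand side of \eqref{eq:th:open_design_noisy_main} is bounded uniformly over $\Sigma_\text{pk}$ by (iii), with $T$ fixed. Taking $c$ large enough makes \eqref{eq:th:open_design_noisy_main} hold for all $(A,B)\in\Sigma_\text{pk}$.

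\textbf{Necessity.} Suppose $\gamma_\text{pk}=0$ and fix any $T$ and $u$. Pick $(A_i,B_i)\in\Sigma_\text{pk}$ with $\sigma_n(C_n)\to0$; by (ii), $\sigma_n(C_k)\to0$. Let $x_i$ be a unit left singular vector of $C_k$ for $\sigma_n(C_k)$. Using the test vector $[x_i^\top\ 0]$ gives
\begin{equation}
\sigma_{n+m}(M^{(k)}_{A_i,B_i}D^{(k)}_{A_i}\mathcal{H}_{k+1}(u))\leq\sigma_n(C_k)\,\|D^{(k)}_{A_i}\|\,\|\mathcal{H}_{k+1}(u)\|\to0.
\end{equation}
Since $d_k=1$, the right-hand side of \eqref{eq:th:open_design_noisy_main} is at least $\varepsilon\sqrt{T}/\rho>0$. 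Hence \eqref{eq:th:open_design_noisy_main} fails for large $i$.

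\textbf{Main obstacle.} I expect the main work to be making the uniform bounds in (ii) and (iii) rigorous. The key point is that the coefficients $d_i$, which need not be unique if $\deg\mu_A<k$, can be chosen bounded, for instance as the coefficients of $\mu_A$ times a power of $s$. This is where the boundedness hypothesis on $\Sigma_\text{pk}$ is used.
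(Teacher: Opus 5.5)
Your proposal is correct and follows essentially the same route as the paper. For sufficiency you scale a persistently exciting input of order $k+1$ and use a product lower bound on singular values; for necessity you take a nearly uncontrollable element of $\Sigma_\text{pk}$ that pushes the left-hand side of \eqref{eq:th:open_design_noisy_main} below $\varepsilon\sqrt{T}/\rho$. Your facts (i)--(iii) also make explicit three points the paper's proof passes over:
\begin{itemize}
\item $\sigma_{n+m}(M^{(k)}_{A,B})=\min\{1,\sigma_n(C_k)\}$, so it is not bounded below by $\gamma_\text{pk}$ itself as the paper writes.
\item $\gamma_\text{pk}$ bounds $\sigma_n(C_n)$, and this has to be transferred to $\sigma_n(C_k)$ through a uniformly bounded $L$.
\item The $d_i$ must be chosen bounded.
\end{itemize}

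One small slip: in the necessity part, $\sigma_n(C_k)\to 0$ does not follow from the inequality in (ii), which goes the other way. It follows from $C_k$ being a column submatrix of $C_n$, so $C_kC_k^\top\leq C_nC_n^\top$ and hence $\sigma_n(C_k)\leq\sigma_n(C_n)$.
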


Condition $\gamma_\textup{pk}>0$ in Theorem~\ref{th:open_enabling_noisy(b)} is not necessary in general, i.e., there exist bounded sets of prior knowledge with $\gamma_\textup{pk}=0$ that enable universal experiment design for \mbox{$\rho$--accuracy} identification. To elaborate on this, we first present the following example.

\begin{example}
\label{ex:help0}
Let $n=m=1$ and consider the set of prior knowledge given by
\begin{equation}
\label{eq:pk_ex:help0}
\Sigma_\text{pk}=\set{(A,B)\in\mathbb{R}\times\mathbb{R}}{0\leq|A|\leq|B|\leq1}.
\end{equation}
For this set, we have $\gamma_\text{pk}=0$ since $(0,0)\in\Sigma_\text{pk}$ is not controllable. Hence, it is evident that \eqref{eq:th:open_design_noisy_main} cannot be satisfied for all $(A,B)\in\Sigma_\text{pk}$. However, one can satisfy \eqref{eq:th:open_design_noisy_main} for all $(A,B)\in\Sigma_\text{pk}$ excluding a small neighborhood of the origin. More precisely, for $\rho>0$, one can partition $\Sigma_\text{pk}$ into two subsets $\Sigma^\prime_\text{pk}$ and $\Sigma_\text{pk}^{\prime\prime}$ (see Fig. \ref{fig:1}) as
\begin{subequations}
\label{eq:pk_ex:help0_partition}
\begin{align}
\label{eq:pk_ex:help0_partition(a)}
\Sigma^\prime_\text{pk}&=\set{(A,B)\in\Sigma_\text{pk}}{\tfrac{\sqrt{2}}{2}\rho\leq|B|\leq1},\\ \label{eq:pk_ex:help0_partition(b)}
\Sigma_\text{pk}^{\prime\prime}&=\set{(A,B)\in\Sigma_\text{pk}}{|B|\leq\tfrac{\sqrt{2}}{2}\rho}.
\end{align}
\end{subequations}
These sets satisfy \mbox{$\Sigma_\text{pk}^\prime\cup\Sigma_\text{pk}^{\prime\prime}=\Sigma_\text{pk}$} and $\rad\Sigma_\text{pk}^{\prime\prime}\leq \rho$. In view of Theorem~\ref{th:open_enabling_noisy}, there exists $u_{[0,T-1]}$ with the property that, when applied to any $(A,B)\in\Sigma^\prime_\text{pk}$, yields data satisfying \mbox{$\rad\Sigma_\mathcal{D}\leq\rho$}. Now, suppose that we apply this input to some $(A,B)\in\Sigma_\text{pk}$. In case the resulting data $\mathcal{D}$ satisfies \mbox{$\rad\Sigma_\mathcal{D}\leq\rho$}, obviously, $\mathcal{D}$ is $\Sigma_\text{pk}$--informative data for \mbox{$\rho$--accuracy} identification. On the other hand, if $\rad\Sigma_\mathcal{D}>\rho$, due to the property of the input signal, we have \mbox{$\Sigma_\mathcal{D}\cap\Sigma^\prime_\text{pk}=\varnothing$}. Hence, $\Sigma_\mathcal{D}\cap\Sigma_\text{pk}=\Sigma_\mathcal{D}\cap\Sigma_\text{pk}^{\prime\prime}$. Since $\rad\Sigma_\text{pk}^{\prime\prime}\leq \rho$, we have
\begin{equation}
\rad(\Sigma_\mathcal{D}\cap\Sigma_\text{pk})=\rad(\Sigma_\mathcal{D}\cap\Sigma_\text{pk}^{\prime\prime})\leq\rad\Sigma_\text{pk}^{\prime\prime}\leq \rho.
\end{equation}
This implies that, also in this case, $\mathcal{D}$ is $\Sigma_\text{pk}$--informative for \mbox{$\rho$--accuracy} identification. Therefore, $u_{[0,T-1]}$ is \mbox{$\Sigma_\text{pk}$--universal} for \mbox{$\rho$--accuracy} identification. 
\end{example}

Example~\ref{ex:help0} shows that, in case \eqref{eq:th:open_design_noisy_main} cannot be satisfied for all $(A,B)\in\Sigma_\text{pk}$, one may still be able to find universal inputs by partitioning $\Sigma_\text{pk}$ into two subsets with certain properties. We formalize this idea as follows. 

\begin{theorem}
\label{th:helping_in_exp}
Let $\rho\geq 0$ and $\Sigma_\text{pk}^\prime\subseteq\Sigma$. Let $u_{[0,T-1]}$ be such that for every $(A,B)\in\Sigma_\text{pk}^\prime$, all datasets \mbox{$\mathcal{D}\in\mathfrak{B}(A,B,u_{[0,T-1]})$} satisfy $\rad\Sigma_\mathcal{D}\leq \rho$. Then, $u_{[0,T-1]}$ is \mbox{$(\Sigma_\text{pk}^\prime\cup\Sigma_\text{pk}^{\prime\prime})$--universal} for \mbox{$\rho$--accuracy} identification for all $\Sigma_\text{pk}^{\prime\prime}\subseteq\Sigma$ with $\rad\Sigma_\text{pk}^{\prime\prime}\leq \rho$.
\end{theorem}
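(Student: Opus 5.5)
The argument generalizes Example~\ref{ex:help0} to an arbitrary partition; the only inputs are monotonicity of the Chebyshev radius and Definition~\ref{def:universal_experiment_noisy}. Put $\Sigma_\text{pk}\coloneqq\Sigma_\text{pk}^\prime\cup\Sigma_\text{pk}^{\prime\prime}$. Fix an arbitrary dataset $\mathcal{D}\in\mathfrak{B}(A_0,B_0,u_{[0,T-1]})$ generated by some $(A_0,B_0)\in\Sigma_\text{pk}$. We must show that $\rad(\Sigma_\mathcal{D}\cap\Sigma_\text{pk})\leq\rho$. Note that $(A_0,B_0)\in\Sigma_\mathcal{D}\cap\Sigma_\text{pk}$, so this intersection is nonempty. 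First record the monotonicity fact: if $\varnothing\neq\mathcal{Y}\subseteq\mathcal{X}$ with $\mathcal{X}$ bounded, then $\rad\mathcal{Y}\leq\rad\mathcal{X}$.

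This fact needs care, because the definition \eqref{eq:cheb} restricts the center to lie in the set. A center $C\in\mathcal{X}$ need not lie in $\mathcal{Y}$. The fix is to take any $C'\in\mathcal{Y}$ and use the triangle inequality. This gives only $\rad\mathcal{Y}\leq \operatorname{diam}\mathcal{Y}\leq 2\rad\mathcal{X}$, which is too weak. I expect this to be the main obstacle. The paper, in Example~\ref{ex:help0}, simply asserts $\rad(\Sigma_\mathcal{D}\cap\Sigma_\text{pk}^{\prime\prime})\leq\rad\Sigma_\text{pk}^{\prime\prime}$, so I will invoke monotonicity in the same way. If a justification is required, I would note that the paper's accompanying references use the (unrestricted-center) Chebyshev radius, for which monotonicity is immediate: any ball containing $\mathcal{X}$ also contains $\mathcal{Y}$.

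Case 1: $\Sigma_\mathcal{D}\cap\Sigma_\text{pk}^\prime\neq\varnothing$. Pick $(A,B)$ in this intersection. Then $\mathcal{D}\in\mathfrak{B}_T(A,B)$, and its input part is $u_{[0,T-1]}$, so $\mathcal{D}\in\mathfrak{B}(A,B,u_{[0,T-1]})$. By hypothesis, $\rad\Sigma_\mathcal{D}\leq\rho$, and in particular $\Sigma_\mathcal{D}$ is bounded. Monotonicity applied to $\Sigma_\mathcal{D}\cap\Sigma_\text{pk}\subseteq\Sigma_\mathcal{D}$ then gives $\rad(\Sigma_\mathcal{D}\cap\Sigma_\text{pk})\leq\rho$.

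Case 2: $\Sigma_\mathcal{D}\cap\Sigma_\text{pk}^\prime=\varnothing$. Then $\Sigma_\mathcal{D}\cap\Sigma_\text{pk}=\Sigma_\mathcal{D}\cap\Sigma_\text{pk}^{\prime\prime}$. This set is nonempty, since it contains $(A_0,B_0)$, and it is contained in $\Sigma_\text{pk}^{\prime\prime}$. The set $\Sigma_\text{pk}^{\prime\prime}$ is bounded because its radius is finite. Monotonicity therefore gives $\rad(\Sigma_\mathcal{D}\cap\Sigma_\text{pk})\leq\rad\Sigma_\text{pk}^{\prime\prime}\leq\rho$.

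In both cases $\mathcal{D}$ is $\Sigma_\text{pk}$--informative for $\rho$--accuracy identification. Since $\mathcal{D}$ was arbitrary, the input $u_{[0,T-1]}$ is $\Sigma_\text{pk}$--universal for $\rho$--accuracy identification, which is the claim. The case $\rho=0$ needs no separate treatment, because the same two cases apply verbatim.
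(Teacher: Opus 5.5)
Your argument is correct and is essentially the paper's proof. The paper splits on whether the generating system lies in $\Sigma_\text{pk}^\prime$ or $\Sigma_\text{pk}^{\prime\prime}$; in the latter case it argues by contradiction that $\Sigma_\mathcal{D}\cap\Sigma_\text{pk}^\prime\neq\varnothing$, which is precisely your dichotomy, and in both cases it uses the same monotonicity $\rad\mathcal{Y}\leq\rad\mathcal{X}$ for $\mathcal{Y}\subseteq\mathcal{X}$.

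Your caution about that step is justified. Under the literal restricted-center definition \eqref{eq:cheb}, monotonicity actually fails: on the real line, $\{-1,1\}\subseteq\{-1,0,1\}$ have radii $2$ and $1$. So both your proof and the paper's require reading $\rad$ as the smallest enclosing-ball radius, which is how the paper itself describes it in words.
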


In view of Theorem~\ref{th:helping_in_exp}, to design an experiment for \mbox{$\rho$--accuracy identification}, it suffices to only look at a certain subset of $\Sigma_\text{pk}$ rather than the entire set. This result, along with Theorem~\ref{th:open_design_noisy}, may lead to universal inputs in case Theorem~\ref{th:open_design_noisy} by itself does not provide a solution. This has applications when the adversarial elements of $\Sigma_\text{pk}$, for which \eqref{eq:th:open_design_noisy_main} does not hold or is difficult to satisfy, lie within a sufficiently small subset of $\Sigma_\text{pk}$. We illustrate this by revisiting Example~\ref{ex:help0} in the following. 

\begin{example}
\label{ex:help}
Consider $\Sigma_\text{pk}$ as in \eqref{eq:pk_ex:help0}. In case $\rho\geq \sqrt{2}$, we have \mbox{$\rad(\Sigma_\mathcal{D}\cap\Sigma_\text{pk})\leq\rad \Sigma_\text{pk}=\sqrt{2}\leq \rho$}. As such, there is no need for data since the prior knowledge itself already provides the required accuracy. Hence, in this case, any input is \mbox{$\Sigma_\text{pk}$--universal} for \mbox{$\rho$--accuracy} identification. Now, suppose that $\rho< \sqrt{2}$. Take $\Sigma^\prime_\text{pk}$ and $\Sigma_\text{pk}^{\prime\prime}$ as in \eqref{eq:pk_ex:help0_partition(a)} and \eqref{eq:pk_ex:help0_partition(b)}. Take $u_{[0,T-1]}$ such that \eqref{eq:th:open_design_noisy_main} holds for all $(A,B)\in\Sigma_\text{pk}^\prime$. Since \mbox{$\Sigma_\text{pk}^\prime\cup\Sigma_\text{pk}^{\prime\prime}=\Sigma_\text{pk}$} and $\rad\Sigma_\text{pk}^{\prime\prime}\leq \rho$, it follows from Theorem~\ref{th:helping_in_exp} that $u_{[0,T-1]}$ is $\Sigma_\text{pk}$--universal for $\rho$--accuracy identification. For instance, for $\rho=1$, one can take \mbox{$T=3$}, $u(0)=u(2)=0$, and $|u(1)| \geq 8.86\varepsilon$, and verify that $u_{[0,2]}$ satisfies \eqref{eq:th:open_design_noisy_main} for all $(A,B)\in\Sigma_\text{pk}^\prime$.
\end{example}

\begin{figure}[h]
    \centering
    \begin{tikzpicture}[scale=1]
    \filldraw[red!10] (-1cm,-1cm)--(-1cm,1cm)--(0,0)--cycle;
    \filldraw[red!10] (1cm,1cm)--(1cm,-1cm)--(0,0)--cycle;
    \filldraw[blue!10] (-0.5cm,-0.5cm)--(-0.5cm,0.5cm)--(0,0)--cycle;
    \filldraw[blue!10] (0.5cm,0.5cm)--(0.5cm,-0.5cm)--(0,0)--cycle;
    \draw[rotate=45,dashed] (0,-1.412cm)--(0,1.412cm);
    \draw[rotate=-45,dashed] (0,-1.412cm)--(0,1.412cm);
    \draw[dashed] (1cm,-1cm)--(1cm,1cm);
    \draw[dashed] (-1cm,-1cm)--(-1cm,1cm);
    \draw[dashed] (0.5cm,-0.5cm)--(0.5cm,0.5cm);
    \draw[dashed] (-0.5cm,-0.5cm)--(-0.5cm,0.5cm);
    \draw[->] (0,-1cm)--(0,1cm);
    \draw[->] (-1.5cm,0)--(1.5cm,0);
    \draw[rotate=-45,|<->|] (-0.1,0)--(-0.1,0.7cm);
    \node[align=center] at (1.7cm,0) {\footnotesize $B$};
    \node[align=center] at (0cm,1.2cm) {\footnotesize $A$};
    \node[align=center] at (0.15cm,0.43cm) {\footnotesize $\rho$};
    \end{tikzpicture}
    \caption{Sets $\Sigma_\text{pk}^\prime$ and $\Sigma_\text{pk}^{\prime\prime}$, for Examples~\ref{ex:help0} and~\ref{ex:help}, shown by the red and blue areas, respectively.}
    \label{fig:1}
\end{figure}

\subsection{Experiment design for exact identification}
\label{sub:UED_noisy_rho=0}

In this section, we consider universal experiment design for identification. We begin by defining the following notion:
\begin{equation}
\delta_\text{pk}\coloneqq\inf\set{\norm{\begin{bmatrix}
F & G
\end{bmatrix}}\neq0}{(F,G)\in\Sigma_\text{pk}-\Sigma_\text{pk}}.
\end{equation}
In case $\delta_\text{pk}>0$, we say that $\Sigma_\text{pk}$ is \emph{uniformly discrete}\footnote{Uniformly discrete sets of prior knowledge appear in certain applications such as population dynamics or multi-agent systems with unknown topology, where the unknown parameters of the system are integers.}, see \cite[p. 36]{beer1993topologies}. This means that every two distinct elements of $\Sigma_\text{pk}$ are separated at least by the distance $\delta_\text{pk}$. This property is closely related to the feasibility of universal experiment design for identification in the noisy setting. 

\begin{proposition}
\label{prop:impossible_noisy}
Suppose that $\varepsilon>0$. If $\Sigma_\text{pk}$ enables universal experiment design for identification, then it is uniformly discrete. 
\end{proposition}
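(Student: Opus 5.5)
We argue by contraposition. Suppose $\varepsilon>0$ and $\Sigma_\text{pk}$ is not uniformly discrete, that is, $\delta_\text{pk}=0$. Fix an arbitrary input $u_{[0,T-1]}$. We will exhibit a system $(A,B)\in\Sigma_\text{pk}$ and a dataset $\mathcal{D}\in\mathfrak{B}(A,B,u_{[0,T-1]})$ such that $\Sigma_\mathcal{D}\cap\Sigma_\text{pk}$ contains two distinct elements. Then $\rad(\Sigma_\mathcal{D}\cap\Sigma_\text{pk})>0$, so $\mathcal{D}$ is not $\Sigma_\text{pk}$--informative for identification. Since $u_{[0,T-1]}$ was arbitrary, no universal input exists.

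\textbf{Step 1 (the idea).} Noise of size $\varepsilon$ can absorb a small model mismatch along a bounded trajectory. Since $\delta_\text{pk}=0$, for every $\eta>0$ there are distinct $(A_1,B_1),(A_2,B_2)\in\Sigma_\text{pk}$ with $0<\norm{\begin{bmatrix} A_1-A_2 & B_1-B_2\end{bmatrix}}<\eta$. Set $F=A_1-A_2$ and $G=B_1-B_2$.

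\textbf{Step 2 (the trajectory).} Generate data with $(A_1,B_1)$ from initial state $x(0)=0$, input $u_{[0,T-1]}$, and zero noise. The resulting state sequence is finite, so we can choose the bound before choosing the pair. First fix a candidate bound $R$ on $\max_t\norm{(x(t),u(t))}$. The difficulty is that the trajectory depends on $(A_1,B_1)$, which in turn depends on $\eta$, so this looks circular.

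To break the circularity, anchor the pairs. Because $\delta_\text{pk}=0$, we may pick a sequence of pairs of distinct elements whose differences tend to zero. If $\Sigma_\text{pk}$ is bounded, the trajectories from $x(0)=0$ under the fixed input are uniformly bounded over all of $\Sigma_\text{pk}$, and the circularity disappears.

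If $\Sigma_\text{pk}$ is unbounded, the anchors may diverge. In that case, simply take $x(0)=0$ and $u$ given, and use the explicit bound
\begin{equation}
\norm{x(t)}\leq \sum_{s<t}\norm{A_1}^{t-1-s}\norm{B_1}\norm{u(s)}.
\end{equation}
The plan is then to choose the pair with difference $\eta$ small \emph{relative} to this bound. This is allowed because the infimum defining $\delta_\text{pk}$ is zero; the only care needed is ordering the choices correctly. Concretely, consider
\begin{equation}
\inf \frac{\norm{[F\ G]}}{1+\norm{[A_2\ B_2]}^{T}}.
\end{equation}
I expect this step to be the main technical obstacle. If the plain infimum argument does not control this ratio, I would fall back on a weaker claim: use $\delta_\text{pk}=0$ over pairs confined to a bounded region, which the intended proof likely assumes implicitly.

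\textbf{Step 3 (consistency of the second system).} With the trajectory fixed as above, define
\begin{equation}
w_2(t)=x(t+1)-A_2x(t)-B_2u(t)=Fx(t)+Gu(t).
\end{equation}
This gives $\norm{w_2(t)}\leq\norm{[F\ G]}\,\norm{(x(t),u(t))}\leq\varepsilon$ once $\eta R\leq\varepsilon$. Hence $\mathcal{D}\in\mathfrak{B}_T(A_2,B_2)$ as well as $\mathfrak{B}_T(A_1,B_1)$. Both distinct pairs therefore lie in $\Sigma_\mathcal{D}\cap\Sigma_\text{pk}$, which contradicts universality.
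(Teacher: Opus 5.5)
\textbf{The gap is in Step~2.} Your Step~2 for unbounded $\Sigma_\text{pk}$ cannot be completed, because the statement is false without boundedness. Take $n=m=1$, $c_k\coloneqq 10^k\varepsilon$, and $\Sigma_\text{pk}\coloneqq\{(0,c_k),(1/k,c_k) : k\geq 2\}$. The two members with index $k$ are at distance $1/k$, so $\delta_\text{pk}=0$. Let $T=2$ and $u(0)=u(1)=1$. Suppose a dataset is consistent with two members $(a_1,b_1)$ and $(a_2,b_2)$. Then $e_t\coloneqq(a_1-a_2)x(t)+b_1-b_2$ satisfies $|e_t|\leq 2\varepsilon$ for $t=0,1$. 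Substituting $x(1)=a_1x(0)+b_1+w(0)$ with $|w(0)|\leq\varepsilon$ gives the identity
\[
(1-a_2)b_1-(1-a_1)b_2=e_1-a_1e_0-(a_1-a_2)w(0).
\]
Since all $a$-values lie in $[0,\tfrac12]$, the right-hand side has modulus at most $3.5\varepsilon$. For distinct members, however, the left-hand side has modulus $c_k/k\geq 50\varepsilon$ if they share the index $k$. If their indices are $k<K$, it is at least $\tfrac12 c_K-c_k\geq 4c_k\geq 400\varepsilon$. Either way it exceeds $3.5\varepsilon$. Hence $\Sigma_\mathcal{D}\cap\Sigma_\text{pk}$ is a singleton for every dataset generated by a member of $\Sigma_\text{pk}$. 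So $u_{[0,1]}$ is $\Sigma_\text{pk}$--universal for identification although $\Sigma_\text{pk}$ is not uniformly discrete.

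In this example all close pairs escape to infinity. Your fallback (``$\delta_\text{pk}=0$ over pairs confined to a bounded region'') is therefore an extra hypothesis, not a consequence of $\delta_\text{pk}=0$. Also, the quantity you proposed is the wrong way round. Step~3 needs $\norm{\begin{bmatrix} F & G\end{bmatrix}}$ \emph{times} the trajectory bound to be small, whereas your ratio tends to zero trivially.

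\textbf{The bounded case is correct.} Under boundedness, which is the setting of Theorem~\ref{th:ctrl_nec_noisy_unique} where the proposition is used, your argument is complete. With $x(0)=0$ and zero noise, $\norm{x(t)}\leq\sum_{s<t}\norm{A}^{t-1-s}\norm{B}\norm{u(s)}$ is bounded uniformly over $\Sigma_\text{pk}$. So one $R$ serves all pairs, a pair with $\norm{\begin{bmatrix} F & G\end{bmatrix}}\leq\varepsilon/R$ exists, and Step~3 applies.

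This is more elementary than the paper's proof. The paper generates zero-noise data from a single $(A,B)$ and uses the quadratic matrix inequality description in Lemma~\ref{lem:exp_qmi} to obtain a ball $\mathcal{B}_r(A,B)\subseteq\Sigma_\mathcal{D}$. It then needs a second member of $\Sigma_\text{pk}$ inside that ball. It gets this by claiming that $\delta_\text{pk}=0$ yields an $(A,B)\in\Sigma_\text{pk}$ whose every neighbourhood contains another member. That claim fails even for bounded sets, e.g.\ $\Sigma_\text{pk}=\{(1/j,1) : j\in\mathbb{N}\}$ with $n=m=1$, where every point is isolated. Your uniform bound makes the admissible radius $\varepsilon/R$ independent of the pair, and so repairs that step in the bounded case.
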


In view of Proposition~\ref{prop:impossible_noisy}, to study universal experiment design for identification, we focus on sets of prior knowledge that are uniformly discrete. To that end, we first investigate conditions under which the data is $\Sigma_\text{pk}$--informative for identification. 

\begin{proposition}
\label{prop:data_best_noisy}
Let $\varepsilon\geq 0$, $\rho\geq 0$, $T\in\mathbb{N}$, and $\mathcal{D}=(u_{[0,T-1]},x_{[0,T]})\in\mathfrak{B}_T(A_\text{true},B_\text{true})$. Then, $\mathcal{D}$ is \mbox{$\Sigma_\text{pk}$--informative} for identification if
\begin{equation}
\label{eq:rank_data_generalized_noisy}
\norm{\begin{bmatrix}
F & G
\end{bmatrix}\begin{bmatrix}
\mathcal{H}_1(x_{[0,T-1]}) \\
\mathcal{H}_1(u_{[0,T-1]})
\end{bmatrix}}> 2\sqrt{T}\varepsilon,
\end{equation}
for all nonzero $(F,G)\in\Sigma_\text{pk}-\Sigma_\text{pk}$.
\end{proposition}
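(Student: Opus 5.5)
Since $\rad(\Sigma_\mathcal{D}\cap\Sigma_\text{pk})=0$ is equivalent to $\Sigma_\mathcal{D}\cap\Sigma_\text{pk}$ being a singleton, the plan is to show that this set contains at most one element. It is nonempty because $(A_\text{true},B_\text{true})$ belongs to it, so informativity for identification follows.

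Abbreviate $Z\coloneqq\begin{bmatrix}\mathcal{H}_1(x_{[0,T-1]})\\ \mathcal{H}_1(u_{[0,T-1]})\end{bmatrix}$ and $X_+\coloneqq\mathcal{H}_1(x_{[1,T]})$. First I characterize membership in $\Sigma_\mathcal{D}$. A pair $(A,B)$ lies in $\Sigma_\mathcal{D}$ if and only if there is a noise sequence with $\norm{w(t)}\le\varepsilon$ such that $X_+=\begin{bmatrix}A&B\end{bmatrix}Z+W$, where $W=\begin{bmatrix}w(0)&\cdots&w(T-1)\end{bmatrix}$. Any such $W$ satisfies $\norm{W}\le\norm{W}_F\le\sqrt{T}\varepsilon$, since each of the $T$ columns has Euclidean norm at most $\varepsilon$.

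Next, take two elements $(A_1,B_1),(A_2,B_2)\in\Sigma_\mathcal{D}\cap\Sigma_\text{pk}$ with corresponding noise matrices $W_1,W_2$. Subtracting the two data equations gives
\begin{equation}
\begin{bmatrix}A_1-A_2 & B_1-B_2\end{bmatrix}Z=W_2-W_1 .
\end{equation}
By the triangle inequality the right-hand side has spectral norm at most $2\sqrt{T}\varepsilon$. The pair $(F,G)\coloneqq(A_1-A_2,B_1-B_2)$ belongs to $\Sigma_\text{pk}-\Sigma_\text{pk}$. If it were nonzero, the hypothesis \eqref{eq:rank_data_generalized_noisy} would force the left-hand side to have norm strictly greater than $2\sqrt{T}\varepsilon$, which is a contradiction. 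Hence $(F,G)=0$, and the set is a singleton.

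There is no real obstacle in this argument. The only point needing care is the norm bound $\norm{W}\le\sqrt{T}\varepsilon$, which passes from columnwise Euclidean bounds to the spectral norm through the Frobenius norm. The argument works for $\varepsilon\ge0$, and when $\varepsilon=0$ it recovers Corollary~\ref{cor:inf_noise-free_general}. The parameter $\rho$ plays no role in the proof.
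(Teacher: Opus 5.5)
Your proposal is correct and follows essentially the same route as the paper's proof. Both take two elements of $\Sigma_\mathcal{D}\cap\Sigma_\text{pk}$, subtract their data equations, bound the noise difference by $2\sqrt{T}\varepsilon$ using $\norm{\mathcal{H}_1(w_{[0,T-1]})}^2\leq\sum_t\norm{w(t)}^2\leq T\varepsilon^2$, and conclude from the strict inequality that the difference $(F,G)$ must be zero. You also make explicit that the set is nonempty because it contains $(A_\text{true},B_\text{true})$, which the paper leaves implicit.
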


Proposition \ref{prop:data_best_noisy} is an extension of Corollary \ref{cor:inf_noise-free_general} to the noisy setting. The following example shows an application of Proposition~\ref{prop:data_best_noisy} to the identification of systems with integer parameters. 

\begin{example}
\label{ex:UED_noisy_unique}
Let $\varepsilon=0.01$ and consider the following prior knowledge set:
\begin{equation}
\Sigma_\text{pk}=\set{\left(\begin{bmatrix}
0 & 1  \\
a_{21} & a_{22} 
\end{bmatrix},\begin{bmatrix}
0 \\ 1
\end{bmatrix}\right)}{a_{12},a_{22}\in[-5,5]}.
\end{equation}
We recall that, in the notation of this paper, $[-5,5]$ is an integer interval. Hence, this prior knowledge set is uniformly discrete. Consider the true system to be \mbox{$(A_\text{true},B_\text{true})=\left(\begin{bmatrix}
0 & 1  \\
2 & -3 
\end{bmatrix},\begin{bmatrix}
0 \\ 1
\end{bmatrix}\right)$}. We apply the input $\mathcal{H}_1(u_{[0,2]})\!=\!\begin{bmatrix}
44 \!&\! 4 \!&\! 0
\end{bmatrix}$ and collect the state data
\begin{equation}
\mathcal{H}_1(x_{[0,3]})\!=\!\begin{bmatrix}
0 \!&\! 10 \!&\! 14 \!&\! -18 \\
10 \!&\! 14 \!&\! -18 \!&\! 82
\end{bmatrix}.
\end{equation}
By \cite[Fact~11.9.23(xii)]{bernstein2018scalar}, every nonzero \mbox{$(F,G)\in\Sigma_\text{pk}-\Sigma_\text{pk}$} satisfies $\norm{\begin{bmatrix}
F & G
\end{bmatrix}}\geq 1$. Using this inequality along with Lemma~\ref{lem:grcar(c)} in Appendix~\ref{app:I}, one can verify that for every nonzero \mbox{$(F,G)\in\Sigma_\text{pk}-\Sigma_\text{pk}$} we have
\begin{equation}
\norm{\begin{bmatrix}
F \! & \! G
\end{bmatrix}\!\!\begin{bmatrix}
\mathcal{H}_1(x_{[0,2]}) \\
\mathcal{H}_1(u_{[0,2]})
\end{bmatrix}}\!\geq\! \sigma_{n+m}\left(\!\begin{bmatrix}
\mathcal{H}_1(x_{[0,2]}) \\
\mathcal{H}_1(u_{[0,2]})
\end{bmatrix}\!\right)\!=\!15.2387.
\end{equation}
Thus, \eqref{eq:rank_data_generalized_noisy} holds for all $(F,G)\in\Sigma_\text{pk}-\Sigma_\text{pk}$ since \mbox{$2\sqrt{T}\varepsilon=0.0346$}. Hence, $\mathcal{D}$ is \mbox{$\Sigma_\text{pk}$--informative} for identification. This means that $\Sigma_\mathcal{D}\cap\Sigma_\text{pk}=\{(A_\text{true},B_\text{true})\}$.
\end{example}

Now, we turn our attention to universal experiment design for identification. The following theorem provides a sufficient condition under which an input is $\Sigma_\text{pk}$--universal for identification in the presence of noise. We recall the definition of $\Sigma^{(k)}$ from \eqref{eq:Sigma^{(k)}}. 

\begin{theorem}
\label{th:GM_noisy_generalized}
Suppose that $\Sigma_\text{pk}$ is uniformly discrete. Let $k\in[1,n]$ be such that $\Sigma_\text{pk}\subseteq\Sigma^{(k)}$. Then, $u_{[0,T-1]}$ is \mbox{$\Sigma_\textup{pk}$--universal} for identification if 
\begin{equation}
\label{eq:th:GM_noisy_generalized-1}
\begin{split}
&\norm{\begin{bmatrix}
F & G
\end{bmatrix}M_{A,B}^{(k)}D_A^{(k)}\mathcal{H}_{k+1}(u_{[0,T-1]})
}\\
&>\varepsilon\sqrt{T}\norm{d_A^{(k)}}_1\left(2+\norm{M_{A,I}^{(k)}}\norm{F}\sqrt{(n+1)}\right)
\end{split}
\end{equation}
for all $(A,B)\in\Sigma_\text{pk}$ and all nonzero $(F,G)\in\Sigma_\text{pk}-\Sigma_\text{pk}$.
\end{theorem}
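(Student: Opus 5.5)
The plan is to reduce the statement to Proposition~\ref{prop:data_best_noisy}. Fix $(A,B)\in\Sigma_\text{pk}$, an input $u_{[0,T-1]}$ satisfying \eqref{eq:th:GM_noisy_generalized-1}, and any $\mathcal{D}=(u_{[0,T-1]},x_{[0,T]})\in\mathfrak{B}(A,B,u_{[0,T-1]})$ generated with some noise $w$ satisfying \eqref{eq:ass1}. It then suffices to show that
\begin{equation}
\norm{\begin{bmatrix} F & G\end{bmatrix}\begin{bmatrix}\mathcal{H}_1(x_{[0,T-1]})\\ \mathcal{H}_1(u_{[0,T-1]})\end{bmatrix}}>2\sqrt{T}\varepsilon
\end{equation}
for every nonzero $(F,G)\in\Sigma_\text{pk}-\Sigma_\text{pk}$. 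Proposition~\ref{prop:data_best_noisy} then gives $\Sigma_\text{pk}$--informativity for identification. Since $(A,B)$ and $\mathcal{D}$ are arbitrary, this is exactly universality. Uniform discreteness is not needed for the argument itself; it only ensures, via Proposition~\ref{prop:impossible_noisy}, that the hypothesis can be met.

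The key step is a noisy version of the fundamental-lemma identity behind Theorem~\ref{th:GM_noiseless_generalized}. Let $\Delta\in\mathbb{R}^{T\times(T-k)}$ be the banded Toeplitz matrix whose $j$th column has $d_0,\ldots,d_k$ in rows $j,\ldots,j+k$, and zeros elsewhere. Column $j$ of $\begin{bmatrix}\mathcal{H}_1(x_{[0,T-1]})\\ \mathcal{H}_1(u_{[0,T-1]})\end{bmatrix}\Delta$ equals $\sum_{i=0}^k d_i\begin{bmatrix}x(j+i)\\u(j+i)\end{bmatrix}$. Expand $x(j+i)=A^ix(j)+\sum_{l<i}A^{i-1-l}(Bu(j+l)+w(j+l))$. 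The free-response term vanishes because $\sum_i d_iA^i=0$, which holds since $\Sigma_\text{pk}\subseteq\Sigma^{(k)}$. Regrouping the remaining terms, I expect the identity
\begin{equation}
\begin{bmatrix}\mathcal{H}_1(x_{[0,T-1]})\\ \mathcal{H}_1(u_{[0,T-1]})\end{bmatrix}\Delta
= M_{A,B}^{(k)}D_A^{(k)}\mathcal{H}_{k+1}(u_{[0,T-1]})+\begin{bmatrix}N_w\\0\end{bmatrix}.
\end{equation}
Here $N_w$ is the state block of $M_{A,I}^{(k)}\tilde D_A^{(k)}\mathcal{H}_{k+1}(w_{[0,T-1]})$, with $\tilde D_A^{(k)}$ the analogue of $D_A^{(k)}$ built with $\otimes I_n$. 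I would check the index bookkeeping carefully, since it is the same computation as in the noise-free case with $B$ replaced by $I$ and $u$ by $w$.

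Multiplying on the left by $\begin{bmatrix}F&G\end{bmatrix}$ and using the triangle inequality gives
\begin{equation}
\norm{\begin{bmatrix}F&G\end{bmatrix}\begin{bmatrix}\mathcal{H}_1(x)\\ \mathcal{H}_1(u)\end{bmatrix}}\,\norm{\Delta}\geq \norm{\begin{bmatrix}F&G\end{bmatrix}M_{A,B}^{(k)}D_A^{(k)}\mathcal{H}_{k+1}(u)}-\norm{F}\norm{N_w}.
\end{equation}
The remaining estimates are as follows.
\begin{itemize}
\item[(i)] $\norm{\Delta}\leq\norm{d_A^{(k)}}_1$ and $\norm{\tilde D_A^{(k)}}\leq\norm{d_A^{(k)}}_1$. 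Both follow from the bound $\norm{\cdot}\le\sqrt{\norm{\cdot}_1\norm{\cdot}_\infty}$, since every row and column has absolute sum at most $\norm{d_A^{(k)}}_1$.
\item[(ii)] $\norm{\mathcal{H}_{k+1}(w)}\leq\norm{\mathcal{H}_{k+1}(w)}_F\leq\sqrt{(k+1)T}\,\varepsilon\leq\sqrt{(n+1)T}\,\varepsilon$, since each $w(t)$ appears at most $k+1$ times.
\end{itemize}
Combining these gives $\norm{F}\norm{N_w}\leq\varepsilon\sqrt{T}\norm{d_A^{(k)}}_1\norm{M_{A,I}^{(k)}}\norm{F}\sqrt{n+1}$. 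Substituting \eqref{eq:th:GM_noisy_generalized-1} makes the right-hand side of the displayed inequality exceed $2\varepsilon\sqrt{T}\norm{d_A^{(k)}}_1$. Dividing by $\norm{\Delta}\leq\norm{d_A^{(k)}}_1$ yields the required $>2\sqrt{T}\varepsilon$.

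The main obstacle is establishing the identity with the noise term in exactly the form $M_{A,I}^{(k)}D^{(k)}\mathcal{H}_{k+1}(w)$. The norm bounds are routine once that is in place. If $T<k+1$, then $\mathcal{H}_{k+1}(u)$ is empty and the hypothesis cannot hold, so this case needs no treatment.
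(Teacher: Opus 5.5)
Your proposal is correct and is essentially the paper's proof. It uses the same ingredients: the noisy identity of Lemma~\ref{lem:Omega_Gamma_Theta} (your $\Delta$ is $\sum_{i=0}^k d_iQ_i$); the triangle inequality with $\|\sum_i d_iQ_i\|\le\|d_A^{(k)}\|_1$, $\|\tilde D_A^{(k)}\|\le\|d_A^{(k)}\|_1$ and $\|\mathcal{H}_{k+1}(w_{[0,T-1]})\|\le\sqrt{T(k+1)}\,\varepsilon\le\sqrt{T(n+1)}\,\varepsilon$; and then Proposition~\ref{prop:data_best_noisy}. You keep the general $k$ and $\tilde D_A^{(k)}$ more consistently than the written proof does, which slips to $k=n$ and $D_A$.

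One correction to a side remark. By Proposition~\ref{prop:impossible_noisy}, uniform discreteness is \emph{necessary} for the hypothesis \eqref{eq:th:GM_noisy_generalized-1} to be satisfiable when $\varepsilon>0$. It does not ensure that the hypothesis can be met, as your wording suggests.
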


Recall from Proposition~\ref{prop:impossible_noisy} that $\Sigma_\text{pk}$ can only enable universal experiment design for identification if it is uniformly discrete. The following theorem shows that, in certain situations, this condition is also sufficient. 

\begin{theorem}
\label{th:ctrl_nec_noisy_unique}
Let $\varepsilon>0$. Suppose that $\Sigma_\textup{pk}$ is bounded and satisfies $\gamma_\textup{pk}>0$. Then, $\Sigma_\textup{pk}$ enables universal experiment design for identification \emph{if and only if} it is uniformly discrete\footnote{If $\Sigma_\text{pk}$ is bounded and uniformly discrete, then it is finite.}.
\end{theorem}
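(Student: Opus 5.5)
The ``only if'' direction follows directly from Proposition~\ref{prop:impossible_noisy}, since $\varepsilon>0$. The work is in the ``if'' direction. Assume $\Sigma_\text{pk}$ is bounded, uniformly discrete, and satisfies $\gamma_\text{pk}>0$. The plan is to build an input meeting the sufficient condition \eqref{eq:th:GM_noisy_generalized-1} of Theorem~\ref{th:GM_noisy_generalized} with $k=n$. This choice is always admissible because $\Sigma^{(n)}=\Sigma$.

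First, I bound the right-hand side of \eqref{eq:th:GM_noisy_generalized-1} uniformly over $\Sigma_\text{pk}$. Boundedness of $\Sigma_\text{pk}$ gives a constant $c$ with $\norm{A},\norm{B}\leq c$. Hence the characteristic-polynomial coefficients satisfy $\norm{d_A}_1\leq \kappa_1$, and $\norm{M^{(n)}_{A,I}}\leq\kappa_2$. Every $(F,G)\in\Sigma_\text{pk}-\Sigma_\text{pk}$ has $\norm{F}\leq 2c$. So the right-hand side is at most $\varepsilon\sqrt{T}\,K$ for a constant $K$ independent of $(A,B)$ and $(F,G)$.

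Second, I bound the left-hand side from below. Using $\norm{[F\ G]}\geq\delta_\text{pk}>0$, I get
\[
\norm{\begin{bmatrix}F & G\end{bmatrix}M_{A,B}D_A\mathcal{H}_{n+1}(u_{[0,T-1]})}\geq \delta_\text{pk}\,\sigma_{n+m}\bigl(M_{A,B}D_A\mathcal{H}_{n+1}(u_{[0,T-1]})\bigr).
\]
The singular value on the right is at least $\sigma_{n+m}(M_{A,B})\,\sigma_{\min}(D_A)\,\sigma_{(n+1)m}(\mathcal{H}_{n+1}(u_{[0,T-1]}))$. Since $\Sigma_\text{pk}$ is bounded and $\gamma_\text{pk}>0$, I get $\sigma_{n+m}(M_{A,B})\geq \min\{1,\gamma_\text{pk}\}$, up to a harmless adjustment, because $M_{A,B}$ is block diagonal after permuting columns. $D_A$ is a Hankel matrix with unit anti-diagonal, so it is invertible. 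Its inverse is polynomial in the bounded coefficients $d_i$, so $\sigma_{\min}(D_A)$ is bounded below by a positive constant on $\Sigma_\text{pk}$. This is where I expect some care is needed. I would write $D_A^{-1}$ explicitly as a triangular Hankel/Toeplitz matrix and bound its norm through the bound on $\norm{d_A}_1$.

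Third, I choose the input. I fix a persistently exciting input $\bar u_{[0,T-1]}$ of order $n+1$ with $T=nm+n+m$, so that $\sigma_{(n+1)m}(\mathcal{H}_{n+1}(\bar u))=s>0$. I then scale it as $u=\lambda\bar u$. The Hankel singular value scales linearly in $\lambda$, while $T$ and $K$ stay fixed. Choosing $\lambda>\varepsilon\sqrt{T}K/(\delta_\text{pk}\,\underline{\sigma}\,s)$, where $\underline{\sigma}$ is the uniform lower bound from the second step, makes \eqref{eq:th:GM_noisy_generalized-1} hold for all $(A,B)\in\Sigma_\text{pk}$ and all nonzero $(F,G)\in\Sigma_\text{pk}-\Sigma_\text{pk}$. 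Theorem~\ref{th:GM_noisy_generalized} then makes $u$ $\Sigma_\text{pk}$--universal for identification. The main obstacle is obtaining the uniform positive lower bound on $\sigma_{n+m}(M_{A,B}D_A)$ over $\Sigma_\text{pk}$ from $\gamma_\text{pk}>0$ and boundedness. If needed, this could alternatively be taken from the proof of Theorem~\ref{th:open_enabling_noisy(a)}, which already handles the same quantity.
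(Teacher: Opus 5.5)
Your proof is correct. The only-if direction is exactly Proposition~\ref{prop:impossible_noisy}, and each estimate in the if direction holds.

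Two of your hedges can be dropped:
\begin{enumerate}
\item After a column permutation, $M_{A,B}$ becomes $\begin{bmatrix}\mathcal{C}_{A,B} & 0\\ 0 & I_m\end{bmatrix}$ with $\mathcal{C}_{A,B}=\begin{bmatrix}B & AB & \cdots & A^{n-1}B\end{bmatrix}$. Hence $\sigma_{n+m}(M_{A,B})=\min\{1,\sigma_n(\mathcal{C}_{A,B})\}\geq\min\{1,\gamma_\text{pk}\}$, with no adjustment needed.
\item The step you flagged for $D_A$ closes in one line. We have $|\det D_A|=1$, and every singular value of $D_A$ is at most $\norm{D_A}\leq\norm{d_A}_1\leq\kappa_1$. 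Hence $\sigma_{\min}(D_A)\geq\kappa_1^{-((n+1)m-1)}$.
\end{enumerate}
Keep your $\min\{1,\gamma_\text{pk}\}$ rather than importing the chain from the proof of Theorem~\ref{th:open_enabling_noisy(a)}. That chain uses $\sigma_{n+m}(M^{(k)}_{A,B})\geq\gamma_\text{pk}$, which fails once $\gamma_\text{pk}>1$. For example, with $n=m=1$, $A=0$, $B=2$, one gets $\sigma_2(M_{A,B})=1<2=\gamma_\text{pk}$.

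Structurally, your argument is the proof of Theorem~\ref{th:open_enabling_noisy(a)} with Lemma~\ref{lem:open_design_noisy} replaced by the exact-identification test of Theorem~\ref{th:GM_noisy_generalized}. The earlier results also give a shorter argument that needs no new estimates. Fix $0<\rho<\delta_\text{pk}/2$ and take the input provided by Theorem~\ref{th:open_enabling_noisy(b)} with $k=n$. For any resulting $\mathcal{D}$, the set $\Sigma_\mathcal{D}\cap\Sigma_\text{pk}$ is nonempty with $\rad(\Sigma_\mathcal{D}\cap\Sigma_\text{pk})\leq\rho$. Two distinct members would be at distance at least $\delta_\text{pk}$, forcing the radius to be at least $\delta_\text{pk}/2>\rho$. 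So the set is a singleton.

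If you prefer to avoid Chebyshev radii altogether: the proof of Lemma~\ref{lem:open_design_noisy} yields $\sigma_\mathcal{D}\geq\varepsilon\sqrt{T}/\rho$. Hence $\norm{\begin{bmatrix}F & G\end{bmatrix}\begin{bmatrix}\mathcal{H}_1(x_{[0,T-1]})\\ \mathcal{H}_1(u_{[0,T-1]})\end{bmatrix}}\geq\delta_\text{pk}\sigma_\mathcal{D}>2\sqrt{T}\varepsilon$ for every nonzero $(F,G)\in\Sigma_\text{pk}-\Sigma_\text{pk}$, and Proposition~\ref{prop:data_best_noisy} applies.

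That reduction is about three lines. Your route is instead self-contained, gives an explicit amplitude threshold in terms of $\delta_\text{pk}$, and uses the criterion designed for exact identification. However, because you lower-bound that criterion via $\sigma_{n+m}$ and a full persistently exciting input, you do not exploit its ability to certify much shorter inputs.
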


We revisit Example~\ref{ex:UED_noisy_unique} to show that the input signal in that example is in fact $\Sigma_\text{pk}$--universal for identification. 

\begin{example}
Let $\varepsilon=0.01$ and consider the set $\Sigma_\text{pk}$ in Example~\ref{ex:UED_noisy_unique}. We show that the input signal \mbox{$\mathcal{H}_1(u_{[0,2]})=\begin{bmatrix}
44 & 4 & 0
\end{bmatrix}$} satisfies \eqref{eq:th:GM_noisy_generalized-1} for all $(A,B)\in\Sigma_\text{pk}$ and $(F,G)\in\Sigma_\text{pk}-\Sigma_\text{pk}$, and thus, it is \mbox{$\Sigma_\text{pk}$--universal} for identification. To this end, we observe that for every \mbox{$(A,B)\in\Sigma_\text{pk}$} we have
\begin{equation}
\begin{split}
d_A&=\begin{bmatrix}
\det A & -\tr A & 1\end{bmatrix}^\top=\begin{bmatrix}
-a_{21} & -a_{22} & 1\end{bmatrix}^\top, \\
M_{A,B}&=\begin{bmatrix}
0 & 0 & 1 \\
0 & 1 & a_{22} \\
1 & 0 & 0
\end{bmatrix},\text{ and } D_A=\begin{bmatrix}
-a_{21} & -a_{22} & 1 \\
-a_{22} & 1 & 0 \\
1 & 0 & 0
\end{bmatrix}
\end{split}
\end{equation}
for some $a_{21},a_{22}\in[-5,5]$. We also observe that every $(F,G)\in\Sigma_\text{pk}-\Sigma_\text{pk}$ is of the form $\begin{bmatrix}
F & G
\end{bmatrix}=\begin{bmatrix}
0 & 0 & 0  \\
f_1 & f_2 & 0 
\end{bmatrix}$
for some $f_1,f_2\in[-10,10]$. One can verify that the right-hand side of \eqref{eq:th:GM_noisy_generalized-1} admits the following upper bound:
\begin{equation}
\max_{\substack{(A,B)\in\Sigma_\text{pk}\\(F,G)\in\Sigma_\text{pk}-\Sigma_\text{pk}}}0.01\sqrt{2}\norm{d_A}_1\left(2+\sqrt{3}\norm{M_{A,I}}\norm{F}\right)<3.1.
\end{equation}
The left-hand side of \eqref{eq:th:GM_noisy_generalized-1} can be written as
\begin{equation}
\norm{\begin{bmatrix}
F & G
\end{bmatrix}M_{A,B}D_A\mathcal{H}_{k+1}(u_{[0,T-1]})
}=4\norm{11f_1+f_2},
\end{equation}
which is larger or equal to $4$ for all nonzero integers \mbox{$f_1,f_2\in[-10,10]$}. Therefore, \eqref{eq:th:GM_noisy_generalized-1} holds, and thus, $u_{[0,2]}$ is $\Sigma_\text{pk}$--universal for identification. 
\end{example}

%% file: i-o.tex
\section{Discussion on Input-Output Data}
\label{sec:i-o data}

In this section, we address universal experiment design for autoregressive systems with exogenous input (ARX systems) based on input-output data. This is achieved by reformulating ARX models in state-space form, thereby enabling the application of the results from Sections~\ref{sec:UED_noisefree} and~\ref{sec:i-o gain}, which were developed for state-space models with input-state data.

Let $M,L,p,m\in\mathbb{N}$. Consider the class of ARX systems
\begin{equation}
\label{eq:2}
y(t)=\sum_{i=1}^{L}P_iy(t-i)+\sum_{j=0}^{M}Q_ju(t-j)+v(t),
\end{equation}
where $y(t)\in\mathbb{R}^p$ is the output, $u(t)\in\mathbb{R}^m$ is the input, and $v(t)\in\mathbb{R}^p$ is the noise satisfying
\begin{equation}
\label{eq:ass2}
\|v(t)\|\leq\varepsilon\ \text{ for all }\ t\in\mathbb{Z}_+.
\end{equation}
We define \mbox{$P\coloneqq \begin{bmatrix}
P_L & P_{L-1} & \cdots & P_1
\end{bmatrix}\in\mathbb{R}^{p\times pL}$} and \mbox{$Q\coloneqq \begin{bmatrix}
Q_M & Q_{M-1} & \cdots & Q_0
\end{bmatrix}\in\mathbb{R}^{m\times m(M+1)}$}. We identify this class of systems with the set
\begin{equation}
\Sigma^\text{ARX}\coloneqq\mathbb{R}^{p\times pL}\times \mathbb{R}^{m\times m(M+1)},
\end{equation}
and we refer to the specific system \eqref{eq:2} as $(P,Q)\in\Sigma^\text{ARX}$. Consider the unknown true system $(P_\text{true},Q_\text{true})\in\Sigma^\text{ARX}$. Let $\Sigma^\text{ARX}_\text{pk}\subseteq\Sigma^\text{ARX}$ capture given prior knowledge of the true system, i.e.,
\begin{equation}
(P_\text{true},Q_\text{true})\in\Sigma^\text{ARX}_\text{pk}.
\end{equation}

We observe that \eqref{eq:2} can be written in the form of \eqref{eq:1}. To do so, we take $n=mM+pL$, 
\begin{equation}
\label{eq:state_and_noise}
x(t)=\begin{bmatrix}
u_{[t-M,t-1]} \\ 
y_{[t-L,t-1]}
\end{bmatrix},\text{ and } w(t)=\begin{bmatrix}
0 \\
I_p
\end{bmatrix}v(t).
\end{equation}
Furthermore, $A$ and $B$ are defined by
\begin{equation}
\label{eq:state-space-rep}
A=\begin{bmatrix}
A_{11} & A_{12} \\
\bar{Q} & P
\end{bmatrix}\ \text{ and }\ B=\begin{bmatrix}
B_1 \\ 
Q_0
\end{bmatrix},
\end{equation}
where \mbox{$\bar{Q}\coloneqq \begin{bmatrix}
Q_M & Q_{M-1} & \cdots & Q_1
\end{bmatrix}\in\mathbb{R}^{m\times mM}$}, 
\begin{equation}
\begin{split}
A_{11}&\!\!\coloneqq\!\! \begin{bmatrix}
0_{m(M-1)\times m} & I_{m(M-1)} \\
0_{m+p(L-1)\times m} & 0_{m+p(L-1)\times m(M-1)}
\end{bmatrix},\\ A_{12}&\!\!\coloneqq\!\!\begin{bmatrix}
0_{mM\times p} \!\!\!&\!\!\! 0_{mM\times p(L-1)} \\
0_{p(L-1) \times p} \!\!\!&\!\!\! I_{p(L-1)}
\end{bmatrix},\text{ and } B_1\!\!\coloneqq\!\!\begin{bmatrix}
0_{m(M-1)\times m} \\ I_m \\ 0_{p(L-1)\times m}
\end{bmatrix}\!\!.
\end{split}
\end{equation} 
The prior knowledge $\Sigma^\text{ARX}$ on the ARX system induces prior knowledge on the associated state-space model. This is captured by
\begin{equation}
\Sigma_\text{pk}=\set{\left(\begin{bmatrix}
A_{11} & A_{12} \\
\bar{Q} & P
\end{bmatrix},\begin{bmatrix}
B_1 \\ 
Q_0
\end{bmatrix}\right)}{(P,Q)\in\Sigma_\text{pk}^\text{ARX}}.
\end{equation}
This set $\Sigma_\text{pk}$ is not open (even if $\Sigma_\text{pk}^\text{ARX}$ is open in $\Sigma^\text{ARX}$). As such, in the noise-free case, the results in \mbox{Section~\ref{sec:noise-free_open}} cannot be directly applied, but one can use the results in Section~\ref{sec:noise-free_arbitrary} to find a \mbox{$\Sigma_\text{pk}$--universal} input for identification. In the presence of noise, we note that \eqref{eq:ass2} implies \eqref{eq:ass1}. Therefore, a \mbox{$\Sigma_\text{pk}$--universal} input for $\rho$--accuracy identification can be designed using the results presented in Section~\ref{sec:i-o gain}. 



%% file: examples.tex
\section{Examples}
\label{sec:ex}

\begin{table*}[h]
    \centering
    \caption{A summary of universal experiment design methods for $\rho$--accuracy identification and their feasibility}
    \label{table:summary}
    \begin{tabular}{|c|c|c|c|c|c|c|c|c|c|}
    \hline
     \multirow{2}{*}{Noise} & \multicolumn{4}{c|}{\multirow{2}{*}{Prior knowledge set}} & \multirow{2}{*}{Results} & \multicolumn{2}{c|}{Universal experiment design} & \multirow{2}{*}{$T\geq$} \\ \cline{7-8}
       & \multicolumn{4}{c|}{} & & $\rho=0$ & $\rho>0$ & \\
     \hline\hline
      \multirow{5}{*}{$\varepsilon=0$} & \multirow{3}{*}{Open} & \multicolumn{3}{c|}{\multirow{2}{*}{$\Sigma_\text{pk}\subseteq\Sigma_\text{cont}$}} & Willems' lemma & \multicolumn{2}{c|}{\multirow{2}{*}{feasible}} & $nm+n+m$ \\ \cline{6-6}\cline{9-9}
        &  & \multicolumn{3}{c|}{} & Thm. \ref{th:GM_noiseless} & \multicolumn{2}{c|}{} & $2n+m$ \\ \cline{3-9}
           &   & \multicolumn{3}{c|}{$\Sigma_\text{pk}\not\subseteq\Sigma_\text{cont}$} & Prop. \ref{prop:ctrl_nec} & \multicolumn{2}{c|}{infeasible} & NA \\ \cline{2-9}
           & \multirow{2}{*}{Arbitrary} & \multirow{2}{*}{$\Sigma_\text{pk}\subseteq\Sigma^{(k)}$} & \multicolumn{2}{c|}{$\Sigma_\text{pk}\subseteq\Sigma_\text{cont}$} & \multirow{2}{*}{Thm. \ref{th:GM_noiseless_generalized}} & \multicolumn{2}{c|}{feasible} & \multirow{2}{*}{$k+1$} \\ \cline{4-5} \cline{7-8}
           & & & \multicolumn{2}{c|}{$\Sigma_\text{pk}\not\subseteq\Sigma_\text{cont}$} &  & \multicolumn{2}{c|}{depends of $\Sigma_\text{pk}$} & \\ \hline\hline
           \multirow{4}{*}{$\varepsilon\neq 0$} & Not uniformly & \multirow{4}{*}{$\Sigma_\text{pk}\subseteq\Sigma^{(k)}$} & \multicolumn{2}{c|}{$\gamma_\text{pk}\geq0$} & Thm.~\ref{th:open_design_noisy} \& \ref{th:helping_in_exp} & \multirow{2}{*}{infeasible} & depends on $\Sigma_\text{pk}$ & \multirow{2}{*}{$k+n+m$} \\ \cline{5-6}\cline{8-8}
            &  discrete &  & \multicolumn{2}{c|}{Bounded \& $\gamma_\text{pk}>0$} & Thm. \ref{th:open_enabling_noisy} &  & feasible &  \\ 
            \cline{2-2}\cline{4-7}\cline{8-9}
           & Uniformly &  & \multicolumn{2}{c|}{$\gamma_\text{pk}\geq0$} & Thm. \ref{th:GM_noisy_generalized} & \multicolumn{2}{c|}{depends on $\Sigma_\text{pk}$} & \multirow{2}{*}{$k+1$} \\ \cline{4-8}
           & discrete & & \multicolumn{2}{c|}{Bounded \& $\gamma_\text{pk}>0$} & Thm.~\ref{th:ctrl_nec_noisy_unique} & \multicolumn{2}{c|}{feasible} & \\
           \hline
    \end{tabular}
\end{table*}

In this section, we consider two examples. The first example concerns finding a hands-off universal input in the noise-free setting. The second example considers a network of identical systems in the presence of noise. 

\subsection{Example 1: Relative orbital motion}
\label{subsec:ex1}

We consider the relative motion of two spacecraft, chaser and target, flying in adjacent circular orbits. This motion can be described by the so-called Clohessy-Wiltshire equations \cite[Sec. II]{sullivan2017comprehensive}. We discretized this dynamics using the forward Euler method, with sample time $\tau>0$. We assume that the output $y(t)\in\mathbb{R}^{2}$ is the position of the chaser relative to the target, and the input $u(t)\in\mathbb{R}^2$ is the force applied to the chaser. The true system belongs to the class of ARX models with $L=2$, $M=2$, $P_\text{true}=\mathcal{P}(\theta_\text{true})$, and $Q_\text{true}=\mathcal{Q}(m_\text{true})$,
where $\mathcal{P}:\mathbb{R}\rightarrow\mathbb{R}^{2\times 4}$ and $\mathcal{Q}:\mathbb{R}\rightarrow\mathbb{R}^{2\times 6}$ are defined as
\begin{equation}
\mathcal{P}(\theta)\!\coloneqq\!\begin{bmatrix}
3\tau^2\theta^2\!-\!1 \!\!&\!\! -2\tau\theta \!\!&\!\! 2 \!\!&\!\! 2\tau\theta \\
2\tau\theta \!\!&\!\! -1 \!\!&\!\! -2\tau\theta \!\!&\!\! 2
\end{bmatrix}
\end{equation}
and $\mathcal{Q}(m)\!\coloneqq\!\frac{\tau^2}{m}\begin{bmatrix}
I_2 & 0_{2\times 4}
\end{bmatrix}$, respectively. The mass of the chaser is denoted by \mbox{$m_\text{true}$}. Parameter \mbox{$\theta_\text{true}=\sqrt{\mu_\text{E}/r_\text{true}^3}$} is the orbital rate of the target, where $r_\text{true}$ is the true radius of the target's orbit and $\mu_\text{E}$ is the Earth's standard gravitational parameter. We assume that $r_\text{true}$ and $m_\text{true}$ are unknown. Hence, the parameter \mbox{$\theta_\text{true}$} is also unknown. We also assume that $\varepsilon=0$. The goal is to design a universal input leading to the identification of $\theta_\text{true}$ and $m_\text{true}$.

A state-space representation of \mbox{$(\mathcal{P}(\theta),\mathcal{Q}(m))\in\Sigma^\text{ARX}$} can be obtained by taking the state vector as \mbox{$x(t)=\begin{bmatrix}
y(t)^\top &
y(t+1)^\top
\end{bmatrix}^\top$} and the pair $(\mathcal{A}(\theta),\mathcal{B}(m))\in\Sigma$ as
\begin{equation}
\mathcal{A}(\theta)\!\coloneqq\!\begin{bmatrix}
0 \!\!&\!\! 0 \!\!&\!\! 1 \!\!&\!\! 0 \\
0 \!\!&\!\! 0 \!\!&\!\! 0 \!\!&\!\! 1 \\
3\tau^2\theta^2\!-\!1 \!\!&\!\! -2\tau\theta \!\!&\!\! 2 \!\!&\!\! 2\tau\theta \\
2\tau\theta \!\!&\!\! -1 \!\!&\!\! -2\tau\theta \!\!&\!\! 2
\end{bmatrix}\!,\ \mathcal{B}(m)\!\coloneqq\!\frac{\tau^2}{m}\begin{bmatrix}
0 \!\!&\!\! 0 \\
0 \!\!&\!\! 0 \\
1 \!\!&\!\! 0 \\
0 \!\!&\!\! 1 
\end{bmatrix}\!.
\end{equation}
We define
\begin{equation}
\label{eq:Sigma_pk_ex1}
\Sigma_\text{pk}=\set{(\mathcal{A}(\theta),\mathcal{B}(m))}{\theta,m> 0}.
\end{equation}
Since $\Sigma_\text{pk}\subset\Sigma_\text{cont}$, it follows from Proposition~\ref{prop:Willems} that, if $u_{[0,T-1]}$ is persistently exciting of order $5$ (which requires $T\geq14$), then it is \mbox{$\Sigma_\text{pk}$--universal} for identification. Nevertheless, due to practical concerns, a popular approach in spacecraft dynamics is to use hands-off input signals that take zero values over a significant amount of time, e.g., two-impulse inputs of the form \mbox{$u(0)\neq 0$}, \mbox{$u(T-1)\neq 0$}, and $u(t)=0$ for \mbox{$t\in[1,T-2]$} (see \cite{prussing1970optimal}). Such a two-impulse input signal is not persistently exciting of order $5$. Interestingly, it turns out that such input signals are typically \mbox{$\Sigma_\text{pk}$--universal} for identification. As an example, take $T=6$, \mbox{$u(0)=u(5)=\begin{bmatrix}
1 & 0
\end{bmatrix}^\top$}, and \mbox{$u_{[1,4]}=0$}. We use Theorem~\ref{th:GM_noiseless_generalized} with $k=n$. Observe that every \mbox{$(F,G)\in\Sigma_\text{pk}-\Sigma_\text{pk}$} satisfies 
\begin{equation}
F=\begin{bmatrix}
0 & 0 & 0 & 0 \\
0 & 0 & 0 & 0 \\
f_1 & -f_2 & 0 & f_2 \\
f_2 & 0 & -f_2 & 0
\end{bmatrix}\text{ and }G=\begin{bmatrix}
0 & 0 \\
0 & 0 \\
g & 0 \\
0 & g 
\end{bmatrix}
\end{equation}
for some $f_1,f_2,g\in\mathbb{R}$. Let $(A,B)\in\Sigma_\text{pk}$. We claim that if $\begin{bmatrix}
F & G
\end{bmatrix}M_{A,B}D_A\mathcal{H}_{5}(u_{[0,5]})= 0$, then both $F$ and $G$ are equal to zero. To see this, we verify that
\begin{equation}
\label{eq:ex1_nonzero}
\begin{bmatrix}
F \!\!&\!\! G
\end{bmatrix}M_{A,B}D_A\mathcal{H}_{5}(u_{[0,5]})\!=\!\begin{bmatrix}
0 \!&\! 0 \\
0 \!&\! 0 \\
-2\frac{\tau^3\theta}{m}f_2+(\tau^2\theta^2+ 1)g \!&\! g \\
\frac{\tau^2}{m}f_1-\frac{\tau^2}{m}f_2 \!&\! 0 
\end{bmatrix}\!.
\end{equation}
Now, suppose that $\begin{bmatrix}
F & G
\end{bmatrix}M_{A,B}D_A\mathcal{H}_{5}(u_{[0,T-1]})= 0$. We have $g=0$ due to the second column. Moreover, since $g=0$, from the $(3,1)$-th entry we have $f_2=0$. Next, we observe from the $(4,1)$-th entry that $f_1=0$. Now, it follows from Theorem~\ref{th:GM_noiseless_generalized} that $u_{[0,5]}$ is $\Sigma_\text{pk}$--universal for identification. In fact, using the same procedure, one can show that if $T\geq 6$, then \emph{almost any} two-impulse input is \mbox{$\Sigma_\text{pk}$--universal} for identification. We note that such two-impulse input signals may not yield $\Sigma$--informative data for identification (without using prior knowledge) as the generated data may not satisfy Proposition~\ref{prop:inf_noise-free_int(c)}. However, such an input yields data suitable for identification when prior knowledge is incorporated.

\subsection{Example 2: Network of unknown but identical systems}
\label{sub:network}

Consider the true system to be a network of $N=1000$ identical systems as
\begin{equation}
A_\text{true}=I_N\otimes \mathcal{A}(\alpha_\text{true})\ \text{ and }\ B_\text{true}=I_N\otimes \mathcal{B}(\beta_\text{true}),
\end{equation}
where $\mathcal{A}:\mathbb{R}^3\rightarrow\mathbb{R}^{3\times 3}$ and $\mathcal{B}:\mathbb{R}^2\rightarrow\mathbb{R}^{3\times 2}$ are defined as
\begin{equation}
\mathcal{A}(\alpha)\!\coloneqq\!\begin{bmatrix}
1 & 0.01 & 0 \\
0 & 1 & 0.01 \\
\alpha_1 & \alpha_2 & 1+\alpha_3
\end{bmatrix},\ \mathcal{B}(\beta)\!\coloneqq\!\begin{bmatrix}
0 & 0 \\
1 & \beta_1 \\
\beta_2 & 1
\end{bmatrix},
\end{equation}
with $\alpha_\text{true}=\begin{bmatrix}
0.01 & -0.2 & 0.04
\end{bmatrix}$ and $\beta_\text{true}=\begin{bmatrix}
-0.03 & 0.01
\end{bmatrix}$ that are unknown parameters. We assume that the noise $w(t)$ satisfies \eqref{eq:ass1} with $\varepsilon=10^{-4}$. Here, $n=3000$ and $m=2000$. We consider the prior knowledge set given by
\begin{equation}
\Sigma_\text{pk}=\set{(I_N\otimes \mathcal{A}(\alpha),I_N\otimes \mathcal{B}(\beta))}{\|\alpha\|,\|\beta\|\leq 0.1}.
\end{equation}
We aim at finding a $\Sigma_\text{pk}$--universal input for $\rho$--accuracy identification with $\rho=10^{-2}$. For this, we use Theorem~\ref{th:open_design_noisy}, i.e., we find an input $u_{[0,T-1]}$ such that \eqref{eq:th:open_design_noisy_main} holds for all $(A,B)\in\Sigma_\text{pk}$. 

For $(A,B)\in\Sigma_\text{pk}$, let $\alpha,\beta\in\mathbb{R}$ be such that $A=I_N\otimes \mathcal{A}(\alpha)$ and $B=I_N\otimes \mathcal{B}(\beta)$. We note that the degree of the minimal polynomial of $A$ is less than or equal to the degree of the characteristic polynomial of $\mathcal{A}(\alpha)$.
Therefore, $\deg(\mu_A)\leq 3$, and thus, we have $\Sigma_\text{pk}\subset\Sigma^{(3)}$. Moreover, one can verify that for all $i\in\mathbb{N}$ we have
\begin{equation}
\sigma_i(M_{A}^{(3)})\!=\!\sigma_i(M_{\mathcal{A}(\alpha)}^{(3)}) \text{ and } \sigma_{i}(M_{(A,B)}^{(3)})\!=\!\sigma_{i}(M_{(\mathcal{A}(\alpha),\mathcal{B}(\alpha))}^{(3)}).
\end{equation}
Now, to satisfy \eqref{eq:th:open_design_noisy_main} for all $(A,B)\in\Sigma_\text{pk}$, we take the input $u_{[0,T-1]}$ to be persistently exciting of order $4$ with a sufficiently large magnitude such that 
\begin{equation}
\label{eq:ineq_random_search}
\sigma_{8}(\mathcal{H}_{4}(u_{[0,T-1]}))\geq \varepsilon\sqrt{T}\tfrac{d_\text{pk}^{(3)}}{\gamma_\text{pk}}\left(\mu_\text{pk}^{(3)}\sqrt{k+1}+\tfrac{1}{\rho}\right),
\end{equation}
where $d_\text{pk}^{(3)}\coloneqq {\displaystyle\sup_{(A,B)\in\Sigma_\text{pk}}}\frac{\|d_A^{(3)}\|_1}{\sigma_{8}(D^{(k)}_A)}$, $\mu_\text{pk}^{(3)}\coloneqq {\displaystyle\sup_{(A,B)\in\Sigma_\text{pk}}}\|M_A^{(3)}\|$, and $\gamma_\text{pk}$ defined in \eqref{eq:gamma_def} are finite positive constants (see Appendix~\ref{app:I} for more details on why \eqref{eq:ineq_random_search} implies that \eqref{eq:th:open_design_noisy_main} holds for all $(A,B)\in\Sigma_\text{pk}$). We compute $d_\text{pk}^{(3)}$, $\mu_\text{pk}^{(3)}$, and $\gamma_\text{pk}$ numerically using the GlobalSearch algorithm in MATLAB, combined with the nonlinear programming solver \texttt{fmincon}, which yields the following values:
\begin{equation}
d_\text{pk}^{(3)}=128.8164,\ \mu_\text{pk}^{(3)}=1.9178,\text{ and } \gamma_\text{pk}=0.0141.
\end{equation}
We take $T=11$ and observe that \eqref{eq:ineq_random_search} reduces to
\begin{equation}
\label{eq:ineq_random_search-eval}
\sigma_{8}(\mathcal{H}_{4}(u_{[0,10]}))\geq 314.9665.
\end{equation}
Now, any persistently exciting input of order $4$, with a sufficiently large magnitude so that \eqref{eq:ineq_random_search-eval} holds, is $\Sigma_\text{pk}$--universal input for $\rho$--accuracy identification. We emphasize that the number of required samples for this experiment is $11$, while applying the robust version of Willems et al.'s fundamental lemma \cite{coulson2022quantitative} requires at least $nm+n+m=6005000$ data samples. 

\section{Conclusions}
\label{sec:conclusions}

In this paper, we have focused on offline experiment design for set-membership identification within the framework of universal inputs. We studied both exact and approximate identification in the noise-free and noisy settings. We investigated the role of prior knowledge in designing an experiment. We have shown that using suitable prior knowledge, one can design universal inputs that outperform persistently exciting ones and go beyond the framework of Willems et al.'s fundamental lemma. See Table~\ref{table:summary} for a summary of the results.

In this work, we only focused on experiment design for identification. Experiment design for data-driven control is an interesting topic that is left as future work. Moreover, due to safety reasons that arise in practice, experiment design with constraints on the system's trajectory is an important topic of research that is also left as future work. 

%% file: appendix.tex
\section{Appendix}

\renewcommand{\thesubsection}{\Alph{subsection}}

This section contains all the proofs of the main results. 





\subsection{Proof of Theorem~\ref{th:single-input-noiseless}}

We first present the following auxiliary result. 

\begin{lemma}[{\cite[prop. 3]{shakouri2025new}}]
\label{lem:contruction_m=1}
Suppose that $m=1$, $\varepsilon=0$, and $u_{[0,T-1]}$ is not persistently exciting of order $n+1$. Let $\eta_1,\ldots,\eta_n\in\mathbb{R}$, not all zero, be such that
\begin{equation}
\label{eq:eta_ker_m=1}
\begin{bmatrix}
\eta_0 & \cdots & \eta_n
\end{bmatrix}\mathcal{H}_{n+1}(u_{[0,T-1]})=0.
\end{equation}
Let $(A,B)\in\Sigma$ be such that $\sum_{i=0}^n \lambda^i \eta_i^\top\neq 0$ for all \mbox{$\lambda\in\spec A$}. Then, there exists $\mathcal{D}=(u_{[0,T-1]},x_{[0,T]})\in\mathfrak{B}(A,B,u_{[0,T-1]})$ such that $\rank \begin{bmatrix}
\mathcal{H}_1(x_{[0,T-1]}) \\ \mathcal{H}_1(u_{[0,T-1]})
\end{bmatrix}<n+m$. 
\end{lemma}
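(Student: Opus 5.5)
The plan is to extend the finite input window to an autonomous linear recursion, pick the initial state so that the whole input-state trajectory is generated by a small autonomous system, and then bound the rank of the data matrix by that system's dimension. Write $\eta(\lambda)\coloneqq\sum_{i=0}^n\eta_i\lambda^i$ and let $d$ and $j_0$ be the largest and smallest indices with $\eta_i\neq 0$.

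\emph{Step 1 (normalize $\eta$).} I would divide out the factor $\lambda^{j_0}$ and work with $\tilde\eta(\lambda)=\eta(\lambda)/\lambda^{j_0}$, of degree $\tilde d=d-j_0\le n$. Its constant and leading coefficients are both nonzero. The relation \eqref{eq:eta_ker_m=1} says
\begin{equation}
\sum_{i=j_0}^{d}\eta_iu(s+i)=0,\qquad s\in[0,T-n-1].
\end{equation}
This is equivalent to $\tilde\eta(\sigma)u=0$ on the shifted window starting at $j_0$, where $\sigma$ is the shift. If $j_0>0$, then $\eta(0)=0$, so the hypothesis forces $0\notin\spec A$, i.e.\ $A$ is invertible. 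In either case $\tilde\eta(A)$ is nonsingular, since its eigenvalues are $\tilde\eta(\lambda)$ for $\lambda\in\spec A$.

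\emph{Step 2 (extend the input).} Since both extreme coefficients of $\tilde\eta$ are nonzero, the recursion $\tilde\eta(\sigma)\bar u=0$ can be run forward and backward. Hence there is a unique sequence $\bar u:\mathbb{Z}\to\mathbb{R}$ satisfying it that agrees with $u$ on an appropriate length-$\tilde d$ stretch of the window. I would then check that the recursion relations available from \eqref{eq:eta_ker_m=1} force $\bar u=u$ on all of $[0,T-1]$. This requires comparing the range of $s$ with the window, and I expect this bookkeeping to be the main obstacle. In particular the first $j_0$ and the last $n-d$ samples are not directly constrained. If this fails at the ends, my fallback is to keep $\eta$ itself and use invertibility of $A$ to propagate the argument backwards.

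\emph{Step 3 (choose the initial state).} The extended input $\bar u$ is the output of an autonomous system $\xi(t+1)=C\xi(t)$, $u(t)=h\xi(t)$, where $C$ is the companion matrix of $\tilde\eta$ and $\xi(t)\in\mathbb{R}^{\tilde d}$. Consider the augmented autonomous system
\begin{equation}
\begin{bmatrix}x\\ \xi\end{bmatrix}(t+1)=\begin{bmatrix}A&Bh\\0&C\end{bmatrix}\begin{bmatrix}x\\ \xi\end{bmatrix}(t).
\end{equation}
The spectra of $A$ and $C$ are disjoint, because $\spec C$ consists of the roots of $\tilde\eta$ and $\tilde\eta$ does not vanish on $\spec A$. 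So the Sylvester equation $AS-SC=-Bh$ has a unique solution $S$, and the subspace $\{(S\xi,\xi)\}$ is invariant. I would choose $x_0\coloneqq S\xi(0)$. Then $x(t)=S\xi(t)$ for all $t$, and this choice defines an element of $\mathfrak{B}(A,B,u_{[0,T-1]})$.

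\emph{Step 4 (rank bound).} Every column of the data matrix is
\begin{equation}
\begin{bmatrix}x(t)\\u(t)\end{bmatrix}=\begin{bmatrix}S\\h\end{bmatrix}\xi(t),
\end{equation}
so the matrix $\begin{bmatrix}\mathcal{H}_1(x_{[0,T-1]})\\ \mathcal{H}_1(u_{[0,T-1]})\end{bmatrix}$ has rank at most $\tilde d\le n<n+1=n+m$. This is the desired conclusion.
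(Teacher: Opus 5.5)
Your mechanism is sound: realize the input as the output of an autonomous system whose spectrum avoids $\spec A$, solve $AS-SC=-Bh$, and start on the invariant subspace. However, Step~2 as stated is false, and Step~4's rank count depends on it.

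The relation $\eta^\top\mathcal{H}_{n+1}(u_{[0,T-1]})=0$ only constrains $u(j_0),\ldots,u(T-1-(n-d))$. The first $j_0$ and the last $n-d$ samples are completely free, so no solution of $\tilde\eta(\sigma)\bar u=0$ need agree with $u$ on all of $[0,T-1]$. Two examples with $n=1$ show this:
\begin{itemize}
\item $T=3$ and $u=(0,0,1)$. Here $\eta=(1,0)$ annihilates $\mathcal{H}_2(u_{[0,2]})$, so $\tilde d=0$ and $\bar u\equiv 0\neq u$. The $u$-row of the data matrix is nonzero, so its rank is at least $1>\tilde d$ for every initial state. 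Your bound "rank $\leq\tilde d$" is therefore wrong.
\item $\eta=(0,1)$ similarly leaves $u(0)$ unconstrained.
\end{itemize}
Your fallback (keep $\eta$, use invertibility of $A$) addresses at most the first $j_0$ samples. It says nothing about the last $n-d$ samples, and does not say how the rank bound changes.

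The missing idea is a column count that absorbs the boundary samples. Apply your construction only on the window $[j_0,T-1-(n-d)]$, which has length $T-n+\tilde d\geq\tilde d+1$:
\begin{itemize}
\item Set $x(j_0)=S\xi(j_0)$ and propagate forward with the given inputs.
\item If $j_0>0$, propagate backward via $x(t)=A^{-1}(x(t+1)-Bu(t))$. This is legitimate because $\eta(0)=0$ forces $0\notin\spec A$.
\end{itemize}
Then $x(t)=S\xi(t)$ on the window, so those columns span a space of dimension at most $\tilde d$. The remaining $j_0+(n-d)$ columns add at most one each, giving rank at most $\tilde d+j_0+(n-d)=n<n+1$.

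A slightly cleaner variant skips the division by $\lambda^{j_0}$. Use the degree-$d$ companion matrix of $\eta/\eta_d$ itself; it may be singular, but its spectrum is still disjoint from $\spec A$. Then $u(t)=e_1^\top C^t\xi(0)$ for $t\in[0,T-n+d-1]$. Take $x(0)=S\xi(0)$: the first $T-n+d$ columns lie in a $d$-dimensional subspace, and only the last $n-d$ columns are free.

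This exact balance between the unconstrained samples and the degree deficit is what makes the bound $n$ hold, and it is absent from your proposal.
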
\vspace{0.2 cm}

\textit{Proof of Theorem~\ref{th:single-input-noiseless}:} The ``if'' part follows from Proposition~\ref{prop:Willems}. For the ``only if'' part, assume that $u_{[0,T-1]}$ is not persistently exciting of order $n+1$. Let $\eta_0,\ldots,\eta_n\in\mathbb{R}$ satisfy \eqref{eq:eta_ker_m=1}. Since $\Sigma_\text{pk}$ is open, there exists $(A,B)\in\Sigma_\text{pk}$ such that $\sum_{i=0}^n \lambda^i \eta_i\neq 0$ for all $\lambda\in\spec A$. It follows now from Lemma \ref{lem:contruction_m=1} that there exists \mbox{$\mathcal{D}=(u_{[0,T-1]},x_{[0,T]})\in\mathfrak{B}(A,B,u_{[0,T-1]})$} satisfying $\rank \begin{bmatrix}
\mathcal{H}_1(x_{[0,T-1]})^\top & \mathcal{H}_1(u_{[0,T-1]})^\top
\end{bmatrix}<n+m$. Due to Proposition \ref{prop:inf_noise-free_int}, $\mathcal{D}$ is not \mbox{$\Sigma_\text{pk}$--informative} for identification. Therefore, the input $u_{[0,T-1]}$ is not \mbox{$\Sigma_\text{pk}$--universal} for identification. \hfill \QED

\subsection{Proof of Theorem~\ref{th:GM_noiseless}}

To prove Theorem~\ref{th:GM_noiseless}, we need some notation and an auxiliary result. Let $(A,B)\in\Sigma$ with $k\in[\deg(\mu_A),n]$, $p\in\mathbb{N}$, \mbox{$C\in\mathbb{R}^{p\times n}$}, and $D\in\mathbb{R}^{p\times m}$. We define 
\begin{equation}
\mathcal{M}(A,B,C,D)\coloneqq \begin{bmatrix}
D & 0 & \cdots & 0 \\
CB & D & \cdots & 0 \\
\vdots & \vdots & \ddots & \vdots \\
C A^{k-1} B & C A^{k-2} B & \cdots & D 
\end{bmatrix}.
\end{equation}
We also define $\Gamma\coloneqq \mathcal{M}(A,B,C,D)$, $\Theta\coloneqq\mathcal{M}(A,I,C,0)$, and $\Omega\coloneqq \begin{bmatrix}
C^\top & (CA)^\top & \cdots & (CA^{k})^\top
\end{bmatrix}^\top$. Recall the definitions of $M^{(k)}_{A,B}$, $d_A^{(k)}$, $D_A^{(k)}$, and $M^{(k)}_A$ from \eqref{eq:M_A,B^k}, \eqref{eq:d_A^k}, \eqref{eq:D_A^k}, and \eqref{eq:M_A^k}, respectively. Let $d_i$ denote the $i$th entry of $d_A^{(k)}$. We define
\begin{equation}
\begin{split}
\tilde{D}_{A}^{(k)}&\coloneqq \begin{bmatrix}
d_0 & \cdots & d_{k-1} & d_k \\
d_1 & \cdots & d_k & 0 \\
\vdots & \iddots & \vdots & \vdots \\
d_k & \cdots & 0 & 0
\end{bmatrix}\otimes I_n\ \text{ and }\\
\label{eq:def_Q}
Q_i&\coloneqq\begin{bmatrix}
0_{i\times (T-k)} \\ I_{T-k} \\ 0_{(k-i)\times(T-k)}
\end{bmatrix}\in\mathbb{R}^{T\times (T-k)}\ \text{ for } i\in[0,k].
\end{split}
\end{equation}
\begin{lemma}
\label{lem:Omega_Gamma_Theta}
Let $(A,B)\in\Sigma$ and $k\in[\deg(\mu_A),n]$. Consider the data $(u_{[0,T-1]},x_{[0,T]})\in\mathfrak{B}_T(A,B)$ and the noise signal $w_{[0,T-1]}$ satisfying \eqref{eq:1} for all $t\in[0,T-1]$. Let $p\in\mathbb{N}$, $C\in\mathbb{R}^{p\times n}$, $D\in\mathbb{R}^{p\times m}$. Define
\begin{equation}
\label{eq:output_y(t)}
y_{[0,T-1]}=C x_{[0,T-1]} + D u_{[0,T-1]}.
\end{equation}
Then, we have
\begin{equation}
\label{eq:main_identity}
\begin{split}
\mathcal{H}_1(y_{[0,T-1]})\sum_{i=0}^{k}d_iQ_i=\begin{bmatrix}
C & D
\end{bmatrix}M_{A,B}^{(k)}D_A^{(k)}\mathcal{H}_{k+1}(u_{[0,T-1]}) \\ 
+\begin{bmatrix}
C & 0
\end{bmatrix}M_{A}^{(k)}\tilde{D}_A^{(k)}\mathcal{H}_{k+1}(w_{[0,T-1]}).
\end{split}
\end{equation}
\end{lemma}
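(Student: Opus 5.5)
The identity is linear in the data, so I would verify it column by column. The $j$th column ($j\in[0,T-k-1]$) of the left-hand side is $\sum_{i=0}^k d_i\, y(j+i)$, because $\mathcal{H}_1(y_{[0,T-1]})Q_i$ extracts columns $i,\ldots,i+T-k-1$ of $\mathcal{H}_1(y_{[0,T-1]})$. The right-hand side is built from depth-$(k+1)$ Hankel matrices, whose $j$th columns are $u_{[j,j+k]}$ and $w_{[j,j+k]}$. By time invariance it therefore suffices to prove the identity for one window starting at time $j$. Shifting time, we may take $j=0$ with $x(0)$ arbitrary.

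\textbf{Step 1: expand each $y(i)$.} Iterating \eqref{eq:1} gives
\begin{equation}
x(i)=A^i x(0)+\sum_{l=0}^{i-1}A^{i-1-l}\bigl(Bu(l)+w(l)\bigr).
\end{equation}
Hence
\begin{equation}
y(i)=CA^i x(0)+\sum_{l=0}^{i-1}CA^{i-1-l}\bigl(Bu(l)+w(l)\bigr)+Du(i).
\end{equation}

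\textbf{Step 2: form $\sum_i d_i y(i)$.} The initial-state term is $C\bigl(\sum_{i=0}^k d_iA^i\bigr)x(0)$. Since $k\ge\deg(\mu_A)$, we can choose the $d_i$ so that $\sum_{i=0}^k d_iA^i=0$; this is exactly the role of $d_A^{(k)}$. So the initial-state term vanishes.

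Collecting coefficients of each $u(l)$ gives
\begin{equation}
d_l D+\sum_{i=l+1}^{k} d_i\, CA^{i-1-l}B.
\end{equation}
Similarly, the coefficient of each $w(l)$ is $\sum_{i=l+1}^{k} d_i\, CA^{i-1-l}$.

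\textbf{Step 3: match with the matrix products.} I would compute $M_{A,B}^{(k)}D_A^{(k)}$ block-columnwise. The block column of $D_A^{(k)}$ multiplying $u(l)$ has blocks $d_l, d_{l+1},\ldots,d_k,0,\ldots,0$, each times $I_m$. The first block of $M_{A,B}^{(k)}$ contributes $\begin{bmatrix}0\\ I_m\end{bmatrix} d_l$. Block $r\ge1$ contributes $\begin{bmatrix}A^{r-1}B\\0\end{bmatrix} d_{l+r}$. Premultiplying by $\begin{bmatrix}C & D\end{bmatrix}$ yields
\begin{equation}
d_lD+\sum_{r\ge1} d_{l+r}\,CA^{r-1}B,
\end{equation}
which matches Step 2 with $i=l+r$. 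The noise term is identical, with $B$ replaced by $I_n$, $D$ by $0$, $M_A^{(k)}$ in place of $M_{A,B}^{(k)}$, and $\tilde D_A^{(k)}$ in place of $D_A^{(k)}$.

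Applying $D_A^{(k)}$ to the column $u_{[0,k]}$ means summing over $l$, which gives exactly the $u$-part of Step 2. The same holds for $w$. This establishes \eqref{eq:main_identity}.

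The only delicate point is the index bookkeeping in Step 3. The anti-triangular Hankel structure of $D_A^{(k)}$ must reproduce the convolution, and one must check that the block ordering in $M_{A,B}^{(k)}$ (the zero/identity block first) aligns with $d_l$ multiplying the feedthrough term. I would fix $k=2$, verify the pattern explicitly, and then write out the general index.
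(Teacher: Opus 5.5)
Your proposal is correct and is essentially the paper's argument written column by column. The paper stacks your per-window expansion as $\mathcal{H}_{k+1}(y_{[0,T-1]})=\Omega\,\mathcal{H}_1(x_{[0,T-k-1]})+\Gamma\,\mathcal{H}_{k+1}(u_{[0,T-1]})+\Theta\,\mathcal{H}_{k+1}(w_{[0,T-1]})$ and left-multiplies by $(d_A^{(k)}\otimes I_p)^\top$, so that the $\Omega$ term vanishes because $\sum_{i=0}^k d_iA^i=0$. It then substitutes $(d_A^{(k)}\otimes I_p)^\top\Gamma=[\,C\ \ D\,]M_{A,B}^{(k)}D_A^{(k)}$ and $(d_A^{(k)}\otimes I_p)^\top\Theta=[\,C\ \ 0\,]M_A^{(k)}\tilde D_A^{(k)}$; your Step 3 index bookkeeping is exactly the verification of these two identities, which the paper asserts without detail.
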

\begin{proof}
The Hankel matrix of depth $k+1$ can be related to that of depth $1$ by the following relation:
\begin{equation}
\label{eq:hankel_generalized}
\mathcal{H}_{k+1}(y_{[0,T-1]})\!=\!(I_{{k+1}}\!\otimes\! \mathcal{H}_1(y_{[0,T-1]}))\!\begin{bmatrix}
Q_0^\top \!\!&\!\! \cdots \!\!&\!\! Q_{k}^\top
\end{bmatrix}^\top\!\!\!. 
\end{equation}
Multiply \eqref{eq:hankel_generalized} from left by $(d_A^{(k)}\otimes I_{p})^\top$ to obtain the following expression for the left-hand side of \eqref{eq:main_identity}:
\begin{equation}
\label{eq:sys_rel_mult_LHS}
\mathcal{H}_1(y_{[0,T-1]})\sum_{i=0}^{k}d_iQ_i=(d_A^{(k)}\otimes I_{p})^\top\mathcal{H}_{k+1}(y_{[0,T-1]}).
\end{equation}
Now, it is enough to show that the right-hand side of \eqref{eq:sys_rel_mult_LHS} is equal to that of \eqref{eq:main_identity}. For this, we first note that $\mathcal{H}_{k+1}(y_{[0,T-1]})$ satisfies the following identity:
\begin{equation}
\label{eq:sys_rel}
\mathcal{H}_{k+1}(y_{[0,T-1]})=\begin{bmatrix}
\Omega & \Gamma & \Theta
\end{bmatrix}\begin{bmatrix}
\mathcal{H}_1(x_{[0,T-k-1]}) \\
\mathcal{H}_{k+1}(u_{[0,T-1]}) \\
\mathcal{H}_{k+1}(w_{[0,T-1]})
\end{bmatrix}.
\end{equation}
Since $\sum_{i=0}^k d_iA^i=0$, we have $(d_A^{(k)}\otimes I_{p})^\top\Omega=0$. Multiplying \eqref{eq:sys_rel} from left by $d_A^{(k)}\otimes I_{p}$ yields
\begin{equation}
\label{eq:sys_rel_mult}
\begin{split}
(d_A^{(k)}\otimes I_{p})^\top\mathcal{H}_{k+1}(y_{[0,T-1]})=(d_A^{(k)}\otimes I_{p})^\top\Gamma \mathcal{H}_{k+1}(u_{[0,T-1]})\\
+(d_A^{(k)}\otimes I_{p})^\top\Theta\mathcal{H}_{k+1}(w_{[0,T-1]}).
\end{split}
\end{equation}
Now, we substitute $(d_A^{(k)}\otimes I_{p})^\top\Gamma=\begin{bmatrix}
C & D
\end{bmatrix}M_{A,B}^{(k)}D_A^{(k)}$ and $(d_A^{(k)}\otimes I_{p})^\top\Theta=\begin{bmatrix}
C & 0
\end{bmatrix}M_{A}^{(k)}\tilde{D}_A^{(k)}$ into this identity to see that the right-hand sides of \eqref{eq:sys_rel_mult_LHS} and \eqref{eq:main_identity} are equal.
\end{proof}

\textit{Proof of Theorem~\ref{th:GM_noiseless}:} Suppose that $u_{[0,T-1]}$ satisfies \eqref{eq:th:GM_noiseless-1} for all \mbox{$(A,B)\in\Sigma_\text{pk}$}. We claim that $u_{[0,T-1]}$ is $\Sigma_\text{pk}$--universal for identification. To see this, let $(A,B)\in\Sigma_\text{pk}$ and \mbox{$\mathcal{D}=(u_{[0,T-1]},x_{[0,T]})\in\mathfrak{B}(A,B,u_{[0,T-1]})$}. It is enough to show that $\mathcal{D}$ is $\Sigma_\text{pk}$--informative for identification. Consider $y_{[0,T-1]}$ as in \eqref{eq:output_y(t)} with $C=\begin{bmatrix}
I_n & 0
\end{bmatrix}^\top$ and $D=\begin{bmatrix}
0 & I_m
\end{bmatrix}^\top$. Based on Lemma~\ref{lem:Omega_Gamma_Theta}, since $w_{[0,T-1]}=0$ and $k=n$, we have
\begin{equation}
\label{eq:GM_formula}
\begin{bmatrix}
\mathcal{H}_{1}(x_{[0,T-1]}) \\ \mathcal{H}_{1}(u_{[0,T-1]})
\end{bmatrix}\sum_{i=0}^{n}d_iQ_i=M_{A,B}D_A\mathcal{H}_{n+1}(u_{[0,T-1]}).
\end{equation}
Since \eqref{eq:th:GM_noiseless-1} holds, we have $\rank \begin{bmatrix}
\mathcal{H}_{1}(x_{[0,T-1]}) \\ \mathcal{H}_{1}(u_{[0,T-1]})
\end{bmatrix}=n+m$. By Proposition \ref{prop:inf_noise-free_int}, this implies that $\mathcal{D}$ is \mbox{$\Sigma_\text{pk}$--informative} for identification. \hfill \QED

\subsection{Proof of Theorem~\ref{th:PE_nec_ball}}

First, we recall the following lemma from \cite{shakouri2025new}. 

\begin{lemma}[{\cite[Lem. 5]{shakouri2025new}}]
\label{lem:contruction}
Suppose that $\varepsilon=0$ and $u_{[0,T-1]}$ is not persistently exciting of order $n+1$. Let $\eta_0,\ldots,\eta_n\in\mathbb{R}^m$, not all zero, be such that
\begin{equation}
\label{eq:eta_ker}
\begin{bmatrix}
\eta_0^\top & \cdots & \eta_n^\top
\end{bmatrix}\mathcal{H}_{n+1}(u_{[0,T-1]})=0.
\end{equation}
Let $A\in\mathbb{R}^{n\times n}$ and $\zeta\in\mathbb{R}^n$ be such that $(A,\zeta)$ is controllable and $\sum_{i=0}^n \lambda^i \eta_i^\top\neq 0$ for all $\lambda\in\spec A$. Let \linebreak $B=\sum_{i=0}^n A^i\zeta\eta_i^\top$. Then, the pair $(A,B)$ is controllable and there exists $x_{[0,T]}$ satisfying $(u_{[0,T-1]},x_{[0,T]})\in\mathfrak{B}_T(A,B)$ such that $\rank \mathcal{H}_1(x_{[0,T-1]})<n$. 
\end{lemma}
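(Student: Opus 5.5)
The proof has two parts: controllability of $(A,B)$, which follows from a PBH eigenvector argument, and the construction of a rank-deficient trajectory, which I would attempt by a telescoping construction.

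\emph{Controllability.} Suppose $v\in\mathbb{C}^n\setminus\{0\}$ and $\lambda\in\spec A$ satisfy $v^*A=\lambda v^*$ and $v^*B=0$. Substituting $B=\sum_{i=0}^n A^i\zeta\eta_i^\top$ gives
\begin{equation}
0=v^*B=(v^*\zeta)\sum_{i=0}^n\lambda^i\eta_i^\top .
\end{equation}
The PBH test for the controllable pair $(A,\zeta)$ gives $v^*\zeta\neq0$. The hypothesis gives $\sum_i\lambda^i\eta_i^\top\neq0$. This is a contradiction, so $(A,B)$ is controllable.

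\emph{Trajectory.} The idea is to absorb $B$ into a telescoping sum driven by the kernel relation \eqref{eq:eta_ker}. I would define the candidate state
\begin{equation}
x(t)\coloneqq-\sum_{i=1}^{n}\sum_{j=1}^{i}A^{i-j}\zeta\eta_i^\top u(t+j-1).
\end{equation}
A direct index shift shows that
\begin{equation}
x(t+1)-Ax(t)-Bu(t)=-\zeta\sum_{i=0}^n\eta_i^\top u(t+i).
\end{equation}
This residual is zero exactly when the $t$-th column of $\mathcal{H}_{n+1}(u_{[0,T-1]})$ is annihilated by $\begin{bmatrix}\eta_0^\top&\cdots&\eta_n^\top\end{bmatrix}$. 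Hence the recursion is the true dynamics for $t\in[0,T-n-1]$.

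For rank deficiency, I would choose $\xi\in\mathbb{R}^n$ with $\xi^\top A^{j}\zeta=0$ for $j\in[0,n-2]$. Such a $\xi$ exists because these are only $n-1$ linear conditions on $\xi$, and it is nonzero since it can be normalized by $\xi^\top A^{n-1}\zeta=1$. With this choice, the only term of $x(t)$ that survives after multiplying by $\xi^\top$ is the one with $i-j=n-1$, namely $i=n$, $j=1$. Thus $\xi^\top x(t)=-\eta_n^\top u(t)$. This alone is not zero, so I would add a free response $A^t x_0$ and choose $x_0$, or correct $\xi$ using the remaining degrees of freedom, so that $\xi^\top x(t)=0$. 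The output identity in Lemma~\ref{lem:Omega_Gamma_Theta}, applied with $C=\xi^\top$, is the natural tool for bookkeeping $\xi^\top x(t)$ as a combination of the free response and the input.

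\emph{Main obstacle.} The difficult step is the tail of the horizon. The formula for $x(t)$ uses $u(t+j-1)$ up to index $t+n-1$, and the kernel relation only holds on the $T-n$ columns of the Hankel matrix. For the last $n$ time steps I would therefore not use the formula but propagate the true dynamics $x(t+1)=Ax(t)+Bu(t)$. I would then need to show that $x(T-n),\dots,x(T-1)$ still lie in the same proper subspace. I would try to achieve this by extending $u$ past $T-1$ by zeros and checking, through the recursion structure, that the extension is consistent on the remaining columns. If that fails, I would fall back on the approach of \cite{shakouri2025new}: pick $\xi$ as a left annihilator of the stacked $[\,\mathcal{H}_1(x)\,]$ expressed through Lemma~\ref{lem:Omega_Gamma_Theta}, and count $n$ free initial-state parameters against the constraints left over after \eqref{eq:eta_ker} is used.
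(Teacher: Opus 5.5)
Your PBH argument for controllability is correct, and so is your telescoping computation: the candidate $x(t)$ satisfies the dynamics on $[0,T-n-1]$, and $\xi^\top x(t)=-\eta_n^\top u(t)$. The gap is the step where you try to remove this term by adding a free response $A^tx_0$ or by adjusting $\xi$. That step cannot be carried out, because the conclusion $\rank\mathcal{H}_1(x_{[0,T-1]})<n$ is false as stated. Take $n=m=1$, $T=2$, $u_{[0,1]}=(1,2)$, $\eta_0=2$, $\eta_1=-1$, $A=0$, $\zeta=1$. The $2\times 1$ matrix $\mathcal{H}_2(u_{[0,1]})$ is not of full row rank, \eqref{eq:eta_ker} holds, $(A,\zeta)$ is controllable, $\eta_0+0\cdot\eta_1\neq0$, and $B=2$. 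Yet every trajectory has $x(1)=Bu(0)=2\neq 0$, so $\rank\mathcal{H}_1(x_{[0,1]})=1=n$ for every $x(0)$. Your tail plan fails here too: extending by $u(2)=0$ gives $2u(1)-u(2)=4\neq0$, so \eqref{eq:eta_ker} does not survive the extension. Your identity $\xi^\top x(t)=-\eta_n^\top u(t)$ is therefore not a defect to be corrected. It exhibits the correct left annihilator, and that annihilator must involve the input.

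What is true is $\rank\begin{bmatrix}\mathcal{H}_1(x_{[0,T-1]})\\ \mathcal{H}_1(u_{[0,T-1]})\end{bmatrix}<n+m$, with nonzero annihilator $\begin{bmatrix}\xi^\top & \eta_n^\top\end{bmatrix}$. This is also all the proof of Theorem~\ref{th:PE_nec_ball} needs, namely the failure of Proposition~\ref{prop:inf_noise-free_int(c)}. Your formula already gives it on $[0,T-n]$. To cover the whole horizon without extending $u$, put $y(t)\coloneqq\xi^\top x(t)+\eta_n^\top u(t)$ and proceed as follows.

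First, $\xi^\top(zI-A)^{-1}\zeta=1/\det(zI-A)$, because its numerator has degree at most $n-1$ and its expansion starts with $z^{-n}$. Hence $\xi^\top(zI-A)^{-1}B+\eta_n^\top=\sum_{i=0}^n\eta_i^\top z^i/\det(zI-A)$. Multiplying by $\det(zI-A)$ and comparing coefficients of $z^0,\ldots,z^n$ gives $\begin{bmatrix}\xi^\top & \eta_n^\top\end{bmatrix}M_{A,B}D_A=\begin{bmatrix}\eta_0^\top&\cdots&\eta_n^\top\end{bmatrix}$.

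Second, Lemma~\ref{lem:Omega_Gamma_Theta} with $C=\xi^\top$, $D=\eta_n^\top$, $w=0$, $k=n$, combined with \eqref{eq:eta_ker}, yields $\sum_{i=0}^n d_iy(t+i)=0$ for $t\in[0,T-n-1]$.

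Third, $(\xi^\top,A)$ is observable, since $[\xi^\top A^{i+j}\zeta]_{i,j=0}^{n-1}$ is anti-triangular with ones on the anti-diagonal. So you can choose $x(0)$ with $y(0)=\cdots=y(n-1)=0$, and because $d_n=1$ the recursion propagates this to $y\equiv0$ on $[0,T-1]$.

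This construction gives the state-only deficiency $\xi^\top\mathcal{H}_1(x_{[0,T-1]})=0$ only in the special case $\eta_n=0$.
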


\textit{Proof of Theorem~\ref{th:PE_nec_ball}:} The ``if'' part follows from Proposition \ref{prop:Willems}. To prove the ``only if'' part, suppose that $u_{[0,T-1]}$ is not persistently exciting of order $n+1$. Let \mbox{$\eta_0,\ldots,\eta_n\in\mathbb{R}^m$}, not all zero, satisfy \eqref{eq:eta_ker} and $\|\eta_i\|\leq 1$ for all \mbox{$i\in[0,n]$}. We claim that there exists \mbox{$(A,B)\in\Sigma_\text{pk}$} and $x_{[0,T]}$ satisfying \mbox{$(u_{[0,T-1]},x_{[0,T]})\in\mathfrak{B}_T(A,B)$} such that the condition in Proposition~\ref{prop:inf_noise-free_int(c)} does not hold. To see this, take \mbox{$A\in\mathcal{A}_\text{pk}$} and \mbox{$\zeta\in\mathbb{R}^n$} such that the pair $(A,\zeta)$ is controllable, \mbox{$\|\zeta\|< \beta/(\sum_{i=0}^n\|A^i\|)$}, and \mbox{$\sum_{i=0}^n \lambda^i \eta_i^\top\neq 0$} for all $\lambda\in\spec A$. Define \mbox{$B=\sum_{i=0}^n A^i\zeta\eta_i$}. Since \mbox{$\|B\|\leq\sum_{i=0}^n \|A^i\|\|\zeta\|\|\eta_i\|<\beta$}, we have \mbox{$(A,B)\in\Sigma_\text{pk}$}. Now, it follows from Lemma \ref{lem:contruction} that there exists $x_{[0,T]}$ satisfying $(u_{[0,T-1]},x_{[0,T]})\in\mathfrak{B}_T(A,B)$ such that the Proposition~\ref{prop:inf_noise-free_int(c)} does not hold. \hfill \QED

\subsection{Proof of Proposition~\ref{prop:data_best}}

For the ``if'' part, suppose that \eqref{eq:rank_data_gen} holds. Let \mbox{$(A,B)\in\Sigma_\mathcal{D}\cap\Sigma_\text{pk}$}. Since \mbox{$(\hat{A},\hat{B}),(A,B)\in\Sigma_\mathcal{D}$}, we have
\begin{equation}
\label{eq:difference_in_ker}
\begin{bmatrix}
\hat{A}-A & \hat{B}-B
\end{bmatrix}\begin{bmatrix}
\mathcal{H}_1(x_{[0,T-1]}) \\
\mathcal{H}_1(u_{[0,T-1]})
\end{bmatrix}=0.
\end{equation}
Since $(\hat{A},\hat{B}),(A,B)\in\Sigma_\text{pk}$, condition \eqref{eq:rank_data_gen} implies that $A=\hat{A}$ and $B=\hat{B}$. Therefore, $\Sigma_\mathcal{D}\cap\Sigma_\text{pk}$ is a singleton. We prove the ``only if'' part by contraposition. Suppose that \eqref{eq:rank_data_gen} does not hold. Take $(A,B)\in\Sigma_\text{pk}\backslash\{(\hat{A},\hat{B})\}$ such that \eqref{eq:difference_in_ker} holds. This implies that
\begin{equation}
\begin{split}
&A\mathcal{H}_1(x_{[0,T-1]})+B\mathcal{H}_1(u_{[0,T-1]}) \\
&=\hat{A}\mathcal{H}_1(x_{[0,T-1]})+\hat{B}\mathcal{H}_1(u_{[0,T-1]})=\mathcal{H}_1(x_{[1,T]}).
\end{split}
\end{equation}
Thus, $(u_{[0,T-1]},x_{[0,T]})\in\mathfrak{B}_T(A,B)$, which implies that $(A,B)\in\Sigma_\mathcal{D}$. Since  $(\hat{A},\hat{B}),(A,B)\in\Sigma_\mathcal{D}\cap\Sigma_\text{pk}$, the set \linebreak $\Sigma_\mathcal{D}\cap\Sigma_\text{pk}$ is not a singleton. Therefore, $\mathcal{D}$ is not \mbox{$\Sigma_\text{pk}$--informative} for identification. \hfill \QED


\subsection{Proof of Theorem~\ref{th:GM_noiseless_generalized}}

Suppose that $u_{[0,T-1]}$ satisfies \eqref{eq:th:GM_noiseless_generalized-1} for all \mbox{$(F,G)\in\Sigma_\text{pk}-\Sigma_\text{pk}$} and all \mbox{$(A,B)\in\Sigma_\text{pk}$}. To show that $u_{[0,T-1]}$ is $\Sigma_\text{pk}$--universal for identification, let $(A,B)\in\Sigma_\text{pk}$ and $\mathcal{D}=(u_{[0,T-1]},x_{[0,T]})\in\mathfrak{B}(A,B,u_{[0,T-1]})$. Also, let \mbox{$(F,G)\in\Sigma_\text{pk}-\Sigma_\text{pk}$}. Define $y_{[0,T-1]}$ as in \eqref{eq:output_y(t)} with $C=F$ and $D=G$. In view of Lemma~\ref{lem:Omega_Gamma_Theta}, since $w_{[0,T-1]}=0$, 
\begin{equation}
\label{eq:th:GM_noiseless_generalized-6}
\mathcal{H}_1(y_{[0,T-1]})\sum_{i=0}^{k}d_iQ_i=\begin{bmatrix}
F & G
\end{bmatrix}M_{A,B}^{(k)}D_A^{(k)}\mathcal{H}_{k+1}(u_{[0,T-1]}).
\end{equation}
The right-hand side of this equation is nonzero due to \eqref{eq:th:GM_noiseless_generalized-1}. Thus, we have $\mathcal{H}_1(y_{[0,T-1]})=\begin{bmatrix}
F & G
\end{bmatrix}\begin{bmatrix}
\mathcal{H}_{1}(x_{[0,T-1]}) \\ \mathcal{H}_{1}(u_{[0,T-1]})
\end{bmatrix}\neq 0$. Since this holds for all \mbox{$(F,G)\in\Sigma_\text{pk}-\Sigma_\text{pk}$}, it follows from Corollary~\ref{cor:inf_noise-free_general} that $\mathcal{D}$ is \mbox{$\Sigma_\text{pk}$--informative} for identification. Now, since this argument holds for all $(A,B)\in\Sigma_\text{pk}$, the input $u_{[0,T-1]}$ is \mbox{$\Sigma_\text{pk}$--universal} for identification. \hfill \QED

\subsection{Proof of Lemma~\ref{lem:noisy_rad_lower}}
\label{app:F}

To prove Lemma~\ref{lem:noisy_rad_lower}, we need some notation and auxiliary results. The set of $n\times n$ symmetric matrices is denoted by $\mathbb{S}^n$. For a matrix $N\in\mathbb{S}^{q+p}$, we consider the block-partitioned form
\begin{equation}
\label{eq:N_partition}
N=\begin{bmatrix}
N_{11} & N_{12} \\
N_{21} & N_{22}
\end{bmatrix},
\end{equation}
with $N_{11}\in\mathbb{S}^q$, $N_{22}\in\mathbb{S}^p$, and $N_{12}=N_{21}^\top\in\mathbb{R}^{q\times p}$. We denote the (generalized) Schur complement of $N$ with respect to $N_{22}$ by $N|N_{22}=N_{11}-N_{12}N_{22}^\dagger N_{21}$. Moreover, we denote
\begin{equation}
\mathcal{Z}_p(N)\coloneqq\set{Z\in\mathbb{R}^{p\times q}}{\begin{bmatrix}
I_q & Z^\top
\end{bmatrix} N \begin{bmatrix}
I_q & Z^\top
\end{bmatrix}^\top\geq 0}.
\end{equation}


\begin{lemma}
\label{lem:exp_qmi}
Let $(\hat{A},\hat{B})\in\Sigma$, $T\in\mathbb{N}$. Consider the data \mbox{$\mathcal{D}=(u_{[0,T-1]},x_{[0,T]})\in\mathfrak{B}_T(\hat{A},\hat{B})$}. Define $N_\text{in}$ and $N_\text{out}$ as follows:
\begin{subequations}
\begin{align}
\label{eq:N_in}
N_\text{in}&\coloneqq\begin{bmatrix}
\varepsilon^2 I & 0 & 0 \\
0 & 0 & 0 \\
0 & 0 & 0
\end{bmatrix}\!-\! \sum_{t=0}^{T-1}\begin{bmatrix}
x(t+1) \\
-x(t) \\
-u(t)
\end{bmatrix}\!\!\!\begin{bmatrix}
x(t+1) \\
-x(t) \\
-u(t)
\end{bmatrix}^\top\!\!, \\
\label{eq:N_out}
N_\text{out}&\coloneqq\begin{bmatrix}
T \varepsilon^2 I & 0 & 0 \\
0 & 0 & 0 \\
0 & 0 & 0
\end{bmatrix}-\sum_{t=0}^{T-1}\begin{bmatrix}
x(t+1) \\
-x(t) \\
-u(t)
\end{bmatrix}\begin{bmatrix}
x(t+1) \\
-x(t) \\
-u(t)
\end{bmatrix}^\top\!\!.
\end{align}
\end{subequations}
Then, the following set inclusions hold:
\begin{subequations}
\begin{align}
\label{eq:lem:exp_qmi-2}
\Sigma_\mathcal{D}&\supseteq\set{(A,B)\in\Sigma}{\begin{bmatrix}
A & B
\end{bmatrix}^\top\in\mathcal{Z}_{n+m}(N_\text{in})},\\
\label{eq:lem:exp_qmi-1}
\Sigma_\mathcal{D}&\subseteq \set{(A,B)\in\Sigma}{\begin{bmatrix}
A & B
\end{bmatrix}^\top\in\mathcal{Z}_{n+m}(N_\text{out})}.
\end{align}
\end{subequations}
\end{lemma}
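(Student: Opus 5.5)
The plan is to write the data-consistency condition as a pointwise noise bound and sum it over time. A pair $(A,B)$ belongs to $\Sigma_\mathcal{D}$ exactly when the residuals $w(t)\coloneqq x(t+1)-Ax(t)-Bu(t)$, $t\in[0,T-1]$, satisfy $\norm{w(t)}\leq\varepsilon$. For $Z=\begin{bmatrix} A & B\end{bmatrix}^\top$ we have $\begin{bmatrix} I_n & Z^\top\end{bmatrix}\begin{bmatrix} x(t+1)^\top & -x(t)^\top & -u(t)^\top\end{bmatrix}^\top=w(t)$. Expanding the quadratic form then gives the identities
\begin{equation}
\begin{bmatrix} I & Z^\top\end{bmatrix} N_\text{in}\begin{bmatrix} I & Z^\top\end{bmatrix}^\top=\varepsilon^2 I-\sum_{t=0}^{T-1}w(t)w(t)^\top,
\end{equation}
\begin{equation}
\begin{bmatrix} I & Z^\top\end{bmatrix} N_\text{out}\begin{bmatrix} I & Z^\top\end{bmatrix}^\top=T\varepsilon^2 I-\sum_{t=0}^{T-1}w(t)w(t)^\top.
\end{equation}
This is the first step: a routine block multiplication using the zero blocks of $N_\text{in}$ and $N_\text{out}$.

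For the inclusion \eqref{eq:lem:exp_qmi-2}, assume $\varepsilon^2 I-\sum_t w(t)w(t)^\top\geq 0$. Every summand is positive semidefinite, so $w(s)w(s)^\top\leq\sum_t w(t)w(t)^\top\leq\varepsilon^2 I$ for each $s$. Taking the largest eigenvalue gives $\norm{w(s)}^2\leq\varepsilon^2$. Hence $\mathcal{D}\in\mathfrak{B}_T(A,B)$, since on the finite horizon we may take $w$ to be these residuals and extend it arbitrarily (for instance by zero) afterwards. So $(A,B)\in\Sigma_\mathcal{D}$.

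For the inclusion \eqref{eq:lem:exp_qmi-1}, take $(A,B)\in\Sigma_\mathcal{D}$. Then each residual satisfies $\norm{w(t)}\leq\varepsilon$, so $w(t)w(t)^\top\leq\norm{w(t)}^2 I\leq\varepsilon^2 I$. Summing over the $T$ time steps gives $\sum_t w(t)w(t)^\top\leq T\varepsilon^2 I$, which is the quadratic matrix inequality defined by $N_\text{out}$.

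There is no real obstacle here. The only points needing care are the sign and ordering conventions in the expansion of the quadratic form, so that $Z$ multiplies the $\begin{bmatrix}-x(t)^\top & -u(t)^\top\end{bmatrix}^\top$ blocks correctly, and the observation that membership in $\mathfrak{B}_T$ only constrains the noise on $[0,T-1]$.
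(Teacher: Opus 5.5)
Your proposal is correct and takes essentially the same route as the paper. You rewrite the quadratic forms of $N_\text{in}$ and $N_\text{out}$ as $\varepsilon^2 I-\sum_t w(t)w(t)^\top$ and $T\varepsilon^2 I-\sum_t w(t)w(t)^\top$, where $w(t)$ are the residuals. You then extract per-sample bounds for the first inclusion and sum per-sample bounds for the second. The differences are minor: the paper gets $\|w(t)\|\leq\varepsilon$ from the spectral norm of $\mathcal{H}_1(x_{[1,T]})-A\mathcal{H}_1(x_{[0,T-1]})-B\mathcal{H}_1(u_{[0,T-1]})$, which is equivalent to your eigenvalue argument, and your remark on extending the noise beyond $T-1$ fills in a detail the paper leaves implicit.
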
\vspace{0.25 cm}
\begin{proof}
To prove \eqref{eq:lem:exp_qmi-2}, let $\begin{bmatrix}
A & B
\end{bmatrix}^\top\in\mathcal{Z}_{n+m}(N_\textup{in})$. We claim that $(A,B)\in\Sigma_\mathcal{D}$. To show this, we observe that $(A,B)$ satisfies 
\begin{equation}
\label{eq:lem:exp_qmi_pf-4}
\begin{bmatrix}
I \\ A^\top \\ B^\top
\end{bmatrix}^{\!\top}\!\!\begin{bmatrix}
\mathcal{H}_1(x_{[1,T]}) \\
-\mathcal{H}_1(x_{[0,T-1]}) \\
-\mathcal{H}_1(u_{[0,T-1]})
\end{bmatrix}\!\!\begin{bmatrix}
\mathcal{H}_1(x_{[1,T]}) \\
-\mathcal{H}_1(x_{[0,T-1]}) \\
-\mathcal{H}_1(u_{[0,T-1]})
\end{bmatrix}^{\!\top}\!\!\begin{bmatrix}
I \\ A^\top \\ B^\top
\end{bmatrix} \leq  \varepsilon^2 I_n.
\end{equation}
This implies that
\begin{equation}
\label{eq:lowerbound_pf-1}
\|\mathcal{H}_1(x_{[1,T]})-A\mathcal{H}_1(x_{[0,T-1]})-B\mathcal{H}_1(u_{[0,T-1]})\|\leq \varepsilon.
\end{equation} 
Now, take the noise signal $w_{[0,T-1]}$ such that
\begin{equation}
\label{eq:lowerbound_pf-2}
\mathcal{H}_1(w_{[0,T-1]})\!=\!\mathcal{H}_1(x_{[1,T]})-A\mathcal{H}_1(x_{[0,T-1]})-B\mathcal{H}_1(u_{[0,T-1]}).
\end{equation}
According to \eqref{eq:lowerbound_pf-1}, this noise signal satisfies $\|\mathcal{H}_1(w_{[0,T-1]})\|\leq\varepsilon$. Therefore, we have $\|w(t)\|\leq\varepsilon$ for all $t\in[0,T-1]$. Hence, $w_{[0,T-1]}$ satisfies the noise model \eqref{eq:ass1}, and thus, $(A,B)\in\Sigma_\mathcal{D}$. 

To prove \eqref{eq:lem:exp_qmi-1}, let $(A,B)\in\Sigma_\mathcal{D}$. We claim that $\begin{bmatrix}
A & B
\end{bmatrix}^\top\in\mathcal{Z}_{n+m}(N_\textup{out})$. To see this, we observe that for any $t\in[0,T-1]$ the pair $(A,B)$ satisfies
\begin{equation}
\label{eq:lem:exp_qmi_pf-1}
\begin{bmatrix}
I & A & B
\end{bmatrix}\begin{bmatrix}
x(t+1)^\top &
-x(t)^\top &
-u(t)^\top
\end{bmatrix}^\top=-w(t),
\end{equation}
for some $w(t)$ with $\norm{w(t)}\leq\varepsilon$. Since we have \mbox{$w(t)w(t)^\top \leq \varepsilon^2 I_n$}, for every $t\in[0,T-1]$ the pair $(A,B)$ satisfies
\begin{equation}
\label{eq:lem:exp_qmi_pf-3}
\begin{bmatrix}
I & A & B
\end{bmatrix} N_t\begin{bmatrix}
I & A & B
\end{bmatrix}^\top\geq 0,
\end{equation}
where 
\begin{equation}
N_t\coloneqq\begin{bmatrix}
\varepsilon^2 I & 0 & 0 \\
0 & 0 & 0 \\
0 & 0 & 0
\end{bmatrix}-\begin{bmatrix}
x(t+1) \\
-x(t) \\
-u(t)
\end{bmatrix}\begin{bmatrix}
x(t+1) \\
-x(t) \\
-u(t)
\end{bmatrix}^\top.
\end{equation}
Now, taking the sum of the left-hand side of \eqref{eq:lem:exp_qmi_pf-3} over all \mbox{$t\in[0,T-1]$} yields \mbox{$\begin{bmatrix}
I \!&\! A \!&\! B
\end{bmatrix}(\textstyle{\sum_{t=0}^{T-1}N_t})\begin{bmatrix}
I \!&\! A \!&\! B
\end{bmatrix}^\top\!\geq\! 0$}. Since $N_\text{out}\!=\!\sum_{t=0}^{T-1} N_t$, we have \mbox{$\begin{bmatrix}
A \!&\! B
\end{bmatrix}^\top\!\in\!\mathcal{Z}_{n+m}(N_\text{out})$}. 
\end{proof}

\begin{lemma}[{\cite[Thm. 7]{shakouri2025chebyshev}}]
\label{lem:radcent_qmi}
Let $N\in\mathbb{R}^{q+p}$, partitioned as \eqref{eq:N_partition}, be such that $N_{22}< 0$ and $N|N_{22}\geq 0$. Then, we have $-N_{22}^{-1}N_{12}^\top\in\cent \mathcal{Z}_p(N)$ and $\rad \mathcal{Z}_p(N) = \sqrt{\sigma_1(N|N_{22})/\sigma_p(N_{22})}$
\end{lemma}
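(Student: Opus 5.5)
The plan is to complete the square in the quadratic matrix inequality that defines $\mathcal{Z}_p(N)$. This reduces $\mathcal{Z}_p(N)$ to a translate of a centrally symmetric "matrix ellipsoid", whose worst-case spectral-norm radius can be computed directly. Throughout, let $Z_0\coloneqq -N_{22}^{-1}N_{12}^\top=-N_{22}^{-1}N_{21}\in\mathbb{R}^{p\times q}$, which is well defined because $N_{22}<0$. Let $S\coloneqq N|N_{22}=N_{11}-N_{12}N_{22}^{-1}N_{21}\geq 0$ and $P\coloneqq -N_{22}>0$.

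\emph{Step 1 (completing the square).} For $Z\in\mathbb{R}^{p\times q}$, expand $\begin{bmatrix} I_q & Z^\top\end{bmatrix}N\begin{bmatrix} I_q & Z^\top\end{bmatrix}^\top = N_{11}+N_{12}Z+Z^\top N_{21}+Z^\top N_{22}Z$. A direct computation shows this equals $S-(Z-Z_0)^\top P(Z-Z_0)$. Hence
\begin{equation}
\mathcal{Z}_p(N)=Z_0+\mathcal{E},\qquad \mathcal{E}\coloneqq\set{\Delta\in\mathbb{R}^{p\times q}}{\Delta^\top P\Delta\leq S}.
\end{equation}
In particular $Z_0\in\mathcal{Z}_p(N)$ since $S\geq 0$. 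The set $\mathcal{E}$ is bounded and symmetric, meaning $\Delta\in\mathcal{E}\Leftrightarrow-\Delta\in\mathcal{E}$.

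\emph{Step 2 (the largest element of $\mathcal{E}$).} Let $r\coloneqq\sqrt{\sigma_1(S)/\sigma_p(N_{22})}$, and note that $\sigma_p(N_{22})=\lambda_{\min}(P)$. For the upper bound, take $\Delta\in\mathcal{E}$ and a unit vector $v$. Then $\lambda_{\min}(P)\|\Delta v\|^2\leq v^\top\Delta^\top P\Delta v\leq v^\top Sv\leq\sigma_1(S)$, so $\|\Delta\|\leq r$. For attainment, let $e$ be a unit eigenvector of $P$ for $\lambda_{\min}(P)$ and $f$ a unit eigenvector of $S$ for $\sigma_1(S)$, and set $\Delta^*\coloneqq r\,ef^\top$. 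Then $\Delta^{*\top}P\Delta^*=\sigma_1(S)ff^\top\leq S$, so $\Delta^*\in\mathcal{E}$, and $\|\Delta^*\|=r$.

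\emph{Step 3 (radius and center).} Choosing $C=Z_0$ in \eqref{eq:cheb}, Step 2 gives $\sup_{Z\in\mathcal{Z}_p(N)}\|Z_0-Z\|= r$, so $\rad\mathcal{Z}_p(N)\leq r$. Conversely, for any $C$, both $Z_0\pm\Delta^*$ lie in $\mathcal{Z}_p(N)$. The triangle inequality gives $2r=\|2\Delta^*\|\leq\|C-Z_0-\Delta^*\|+\|C-Z_0+\Delta^*\|$, so $\sup_{Z}\|C-Z\|\geq r$. Therefore $\rad\mathcal{Z}_p(N)=r$. Since $Z_0\in\mathcal{Z}_p(N)$ and it attains the supremum $r$, it belongs to $\cent\mathcal{Z}_p(N)$.

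I expect no real obstacle here. The only care needed is in the algebra of Step 1, namely keeping the transposition convention $N_{21}=N_{12}^\top$ consistent with $Z\in\mathbb{R}^{p\times q}$. One should also read $\sigma_p(N_{22})$ as the smallest eigenvalue of $-N_{22}$, which is how it is used in Step 2.
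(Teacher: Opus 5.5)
The paper gives no proof of this lemma; it imports it from \cite[Thm. 7]{shakouri2025chebyshev}. Your proposal therefore supplies an argument the paper omits, and it is correct. The completed square checks out: using $N_{22}Z_0=-N_{21}$, the cross terms and the constant $N_{12}N_{22}^{-1}N_{21}$ come out as you state. The Rayleigh-quotient estimate bounds $\norm{\Delta}$ by $r$ on $\mathcal{E}$. The rank-one matrix $r\,ef^\top$ lies in $\mathcal{E}$ because $S-\sigma_1(S)ff^\top\geq 0$ for $S\geq 0$. Finally, the pair $Z_0\pm\Delta^*$ gives the matching lower bound.

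The route customary in the QMI literature (e.g.\ \cite{HenkQMI2023}) first establishes a parametrization of the form $\mathcal{Z}_p(N)=\set{Z_0+(-N_{22})^{-1/2}\Upsilon(N|N_{22})^{1/2}}{\norm{\Upsilon}\leq 1}$ and then reads off the extremal norm. Your argument is more elementary: it needs no matrix square roots and no set equality, only the inclusion of $\mathcal{E}$ in a ball plus one explicit extremal element.

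Two small remarks. Your lower bound in Step 3 holds for every $C\in\mathbb{R}^{p\times q}$. So the conclusion is valid both for \eqref{eq:cheb} as literally written (infimum over $C\in\mathcal{X}$) and for the unrestricted Chebyshev radius. Under the restricted definition, $Z_0\in\mathcal{Z}_p(N)$, which you correctly derive from $S\geq 0$, is exactly what makes $Z_0$ admissible in \eqref{eq:cent}. Also, no special reading of $\sigma_p(N_{22})$ is needed. For a symmetric matrix the singular values are the absolute values of the eigenvalues, so $N_{22}<0$ already gives $\sigma_p(N_{22})=\lambda_{\min}(-N_{22})$.
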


\textit{Proof of Lemma~\ref{lem:noisy_rad_lower}:}
(a) Let $(A,B)\in\Sigma$ and \mbox{$\mathcal{D}=(u_{[0,T-1]},x_{[0,T]})\in\mathfrak{B}_T(A,B)$}. Suppose that $\sigma_\mathcal{D}=0$. It follows from \cite[Thm. 3.2(b)]{HenkQMI2023} that $\mathcal{Z}_{n+m}(N_\text{in})$ is unbounded. Therefore, due to Lemma~\ref{lem:exp_qmi}, we have that $\Sigma_\mathcal{D}$ is unbounded. Now, suppose that $\sigma_\mathcal{D}>0$. It from \cite[Thm. 3.2(b)]{HenkQMI2023} that $\mathcal{Z}_{n+m}(N_\text{out})$ is bounded. Therefore, by \eqref{eq:lem:exp_qmi-1}, $\Sigma_\mathcal{D}$ is bounded and
\begin{equation}
\label{eq:th:noisy_radcompare-pf1}
\rad\Sigma_\mathcal{D}\leq\rad\mathcal{Z}_{n+m}(N_\text{out}).
\end{equation}
Now, partition $N_\text{out}$ as $N_\text{out}=\begin{bmatrix}
N_{11} & N_{12} \\
N_{21} & N_{22}
\end{bmatrix}$, where \mbox{$N_{11}\in\mathbb{S}^n$}, $N_{22}\in\mathbb{S}^{n+m}$, and $N_{12}=N_{12}^\top\in\mathbb{R}^{n\times (n+m)}$. We have $N_{22}<0$ because $\sigma_\mathcal{D}>0$. Since $\mathcal{Z}_{n+m}(N_\text{out})$ is nonempty, we have \mbox{$N|N_{22}\geq 0$}, see \cite[p. 2257]{HenkQMI2023}. Hence, $N_\text{out}$ satisfies the hypothesis of Lemma~\ref{lem:radcent_qmi}. Thus, we have
\begin{equation}
\label{eq:th:noisy_radcompare-pf2}
\rad \mathcal{Z}_{n+m}(N_\text{out})=\sqrt{\sigma_1(N|N_{22})/\sigma_{n+m}(N_{22})}.
\end{equation}
Since $N|N_{22}\leq\varepsilon^2 T I_n$, we have $\sigma_1(N|N_{22})\leq \varepsilon^2 T$. This, together with \eqref{eq:th:noisy_radcompare-pf1} and \eqref{eq:th:noisy_radcompare-pf2}, implies that $\rad\Sigma_\mathcal{D}\leq \frac{\sqrt{T}\varepsilon}{\sigma_\mathcal{D}}$. 

(b) Take $\mathcal{D}=(u_{[0,T-1]},x_{[0,T]})\in\mathfrak{B}(A,B,u_{[0,T-1]})$ to be a dataset corresponding to the noise signal $w_{[0,T-1]}=0$, i.e., $\mathcal{D}$ satisfies
\begin{equation}
\label{eq:data_ad_lower}
\mathcal{H}_1(x_{[1,T]})=A\mathcal{H}_1(x_{[0,T-1]})+B\mathcal{H}_1(u_{[0,T-1]}).
\end{equation}
Suppose that $\sigma_\mathcal{D}>0$. We will show that $\rad\Sigma_\mathcal{D}\geq \frac{\varepsilon}{\sigma_\mathcal{D}}$. Based on Lemma \ref{lem:exp_qmi}, we have
\begin{equation}
\label{eq:th:noisy_radcompare-pf1}
\rad\Sigma_\mathcal{D}\geq\rad\mathcal{Z}_{n+m}(N_\text{in}).
\end{equation}
Partition $N_\text{in}$ as $N_\text{in}=\begin{bmatrix}
N_{11} & N_{12} \\
N_{21} & N_{22}
\end{bmatrix}$, where $N_{11}\in\mathbb{S}^n$, $N_{22}\in\mathbb{S}^{n+m}$, and $N_{12}=N_{21}^\top\in\mathbb{R}^{n\times (n+m)}$. Since $\sigma_\mathcal{D}>0$, we have $N_{22}<0$. Since $\begin{bmatrix}
A & B
\end{bmatrix}^\top\in\mathcal{Z}_{n+m}(N_\text{in})$, we have that $\mathcal{Z}_{n+m}(N_\text{in})$ is nonempty. This, together with $N_{22}<0$, implies that \mbox{$N|N_{22}\geq 0$}, see \cite[p. 2257]{HenkQMI2023}. Now, it follows from Lemma~\ref{lem:radcent_qmi} that
\begin{equation}
\label{eq:th:noisy_radcompare-pf2}
\rad \mathcal{Z}_{n+m}(N_\text{in})=\sqrt{\sigma_1(N|N_{22})/\sigma_{n+m}(N_{22})}.
\end{equation}
We observe that $N|N_{22}\leq\varepsilon^2 I$. It follows from \cite[Eq. (3.4)]{HenkQMI2023} that $N|N_{22}\geq \begin{bmatrix}
I \!&\! A \!&\! B
\end{bmatrix} N_\text{in} \begin{bmatrix}
I \!&\! A \!&\! B
\end{bmatrix}^\top$. This, together with \eqref{eq:data_ad_lower}, implies $N|N_{22}\geq \varepsilon^2 I$. Therefore, \mbox{$N|N_{22}=\varepsilon^2 I$}. Substituting this into \eqref{eq:th:noisy_radcompare-pf2} yields $\rad \mathcal{Z}_{n+m}(N_\text{in})=\frac{\varepsilon}{\sigma_\mathcal{D}}$. Therefore, \eqref{eq:th:noisy_radcompare-pf1} implies \mbox{$\rad\Sigma_\mathcal{D}\geq \frac{\varepsilon}{\sigma_\mathcal{D}}$}. 
\hfill \QED

\subsection{Proof of Lemma~\ref{lem:open_design_noisy}}

Before stating the proof, we recall some singular value inequalities in the following lemma.

\begin{lemma}[{\cite[Thm. 3.3.16]{horn1994topics}}]
\label{lem:horn}
Let $M,N\in\mathbb{R}^{n\times m}$ and define $q=\min\{n,m\}$. The following inequalities hold:
\begin{enumerate}[label=(\alph*),ref=\ref{lem:horn}(\alph*)]
    \item\label{lem:horn(c)} $\sigma_{q}(M)-\sigma_1(N)\leq\sigma_q(M+N)$,
    \item\label{lem:horn(d)} $\sigma_q(NM^\top)\leq \sigma_q(N)\sigma_1(M)$.
\end{enumerate}
\end{lemma}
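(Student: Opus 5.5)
Both inequalities are classical (they are special cases of \cite[Thm. 3.3.16]{horn1994topics}). I would derive them from one tool, the Eckart--Young--Mirsky characterization of singular values. For any $X\in\mathbb{R}^{n\times p}$ and any $j\geq 1$,
\begin{equation}
\sigma_j(X)=\min\set{\norm{X-R}}{R\in\mathbb{R}^{n\times p},\ \rank R\leq j-1}.
\end{equation}
The minimum is attained by truncating the singular value decomposition of $X$ after its first $j-1$ terms. This formula is consistent with the paper's convention $\sigma_j(X)=0$ for $j>\min\{n,p\}$, since then $R=X$ is admissible. With this formula, both statements reduce to the triangle inequality and submultiplicativity of the spectral norm.

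For (a), I would let $R_0$ be a minimizer in the above formula for $M+N$ with $j=q$. Then $\rank R_0\leq q-1$ and $\norm{M+N-R_0}=\sigma_q(M+N)$. Since $R_0$ is also admissible for $M$, I get
\begin{equation}
\sigma_q(M)\leq\norm{M-R_0}\leq\norm{M+N-R_0}+\norm{N}=\sigma_q(M+N)+\sigma_1(N),
\end{equation}
which rearranges to (a). An equivalent route uses the variational form: if $m\leq n$, then $\sigma_q(X)=\min_{\norm{x}=1}\norm{Xx}$, and otherwise one applies the same formula to $X^\top$. In that form, $\norm{(M+N)x}\geq\norm{Mx}-\norm{Nx}\geq\sigma_q(M)-\sigma_1(N)$ for every unit vector $x$. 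I prefer the low-rank approximation argument because it needs no case distinction on whether $m\leq n$.

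For (b), $NM^\top$ is $n\times n$ and $q\leq n$. I would take $R_0$ to be a best rank-$(q-1)$ approximation of $N$, so that $\norm{N-R_0}=\sigma_q(N)$. Then $R_0M^\top$ has rank at most $q-1$, and hence
\begin{equation}
\sigma_q(NM^\top)\leq\norm{NM^\top-R_0M^\top}\leq\norm{N-R_0}\,\norm{M^\top}=\sigma_q(N)\sigma_1(M).
\end{equation}

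The only step needing care is to justify the Eckart--Young--Mirsky formula for the spectral norm, including attainment of the minimum. I would state it as a standard fact. If a self-contained argument is wanted, the key inequality $\norm{X-R}\geq\sigma_j(X)$ for $\rank R\leq j-1$ follows from a dimension count. The kernel of $R$ has dimension at least $p-j+1$, so it meets the span of the top $j$ right singular vectors of $X$ nontrivially. On any unit vector $v$ in that intersection, $\norm{(X-R)v}=\norm{Xv}\geq\sigma_j(X)$. Attainment is immediate from the truncated singular value decomposition. Everything else is routine.
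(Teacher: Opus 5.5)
Your proof is correct, but it does not follow the paper, because the paper gives no proof. It imports the lemma from \cite[Thm. 3.3.16]{horn1994topics}, where (a) and (b) are the instances $i=q$, $j=1$ of the Weyl-type inequalities $\sigma_{i+j-1}(A+B)\leq\sigma_i(A)+\sigma_j(B)$ and $\sigma_{i+j-1}(AB^\top)\leq\sigma_i(A)\sigma_j(B)$; for (a), apply the first with $A=M+N$ and $B=-N$.

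Your Eckart--Young--Mirsky argument is a self-contained replacement. Its only nontrivial step, the dimension count, is the same mechanism behind the min--max characterization of singular values, so it is not a deeper argument. What it buys is that each part becomes one line of triangle inequality or submultiplicativity, with no case split on $n\leq m$. It also extends at no cost: with $\rank R_1\leq i-1$ and $\rank R_2\leq j-1$, take $R=R_1+R_2$, respectively $R=R_1Y+(X-R_1)R_2$ so that $XY-R=(X-R_1)(Y-R_2)$. This recovers the full $i+j-1$ inequalities.

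More relevant to this paper, your proof of (b) never uses that $M$ and $N$ have the same shape. With the convention $\sigma_j=0$ beyond the smaller dimension, both parts also hold for every index $j$. So you actually prove $\sigma_j(XY)\leq\sigma_j(X)\norm{Y}$ for arbitrary conformable $X,Y$. That is the form in which the paper uses (b): in the proof of Lemma~\ref{lem:open_design_noisy} the factors are $(n+m)\times T$ and $T\times(T-k)$, and in the necessity part of Theorem~\ref{th:open_enabling_noisy} the factors are again of different sizes. The lemma as literally stated, with $M,N\in\mathbb{R}^{n\times m}$ and index $q$, does not cover those applications, whereas your argument does.
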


\textit{Proof of Lemma~\ref{lem:open_design_noisy}:} Suppose that $u_{[0,T-1]}$ satisfies~\eqref{eq:th:open_design_noisy_main} for some $\rho>0$. Let \mbox{$\mathcal{D}=(u_{[0,T-1]},x_{[0,T]})\in\mathfrak{B}(A,B,u_{[0,T-1]})$}. We show that \mbox{$\rad\Sigma_\mathcal{D}\leq\rho$}. Let $w_{[0,T-1]}$ be the noise signal that satisfies $\|w(t)\|\leq\varepsilon$ for all $t\in[0,T-1]$ and
\begin{equation}
\mathcal{H}_1(x_{[1,T]})\!=\!A\mathcal{H}_1(x_{[0,T-1]})+B\mathcal{H}_1(u_{[0,T-1]})+\mathcal{H}_1(w_{[0,T-1]}).
\end{equation}
Define $y_{[0,T-1]}$ as in \eqref{eq:output_y(t)} with $C=\begin{bmatrix}
I_n & 0
\end{bmatrix}^{\top}$ and \mbox{$D=\begin{bmatrix}
0 & I_m
\end{bmatrix}^{\top}$}. It follows from Lemma~\ref{lem:Omega_Gamma_Theta} that
\begin{equation}
\label{eq:lem:open_design_noisy_pf-4}
\begin{split}
\begin{bmatrix}
\mathcal{H}_{1}(x_{[0,T-1]}) \\ \mathcal{H}_{1}(u_{[0,T-1]})
\end{bmatrix}\sum_{i=0}^{k}d_iQ_i=M_{A,B}^{(k)}D_A^{(k)}\mathcal{H}_{k+1}(u_{[0,T-1]})\\
+\begin{bmatrix}
I_n & 0 \\
0 & 0
\end{bmatrix}M_{A}^{(k)}\tilde{D}_A^{(k)}\mathcal{H}_{k+1}(w_{[0,T-1]}).
\end{split}
\end{equation}
For the right-hand side, it follows from Lemma~\ref{lem:horn(c)} that
\begin{equation}
\label{eq:lem:open_design_noisy_pf-4_RHS}
\begin{split}
&\sigma_{n+m}\bigg(M_{A,B}^{(k)}D_A^{(k)}\mathcal{H}_{k+1}(u_{[0,T-1]})\\
&+\begin{bmatrix}
I_n & 0 \\
0 & 0
\end{bmatrix}M_{A}^{(k)}\tilde{D}_A^{(k)}\mathcal{H}_{k+1}(w_{[0,T-1]})\bigg)\\
&\geq \sigma_{n+m}(M_{A,B}^{(k)}D_A^{(k)}\mathcal{H}_{k+1}(u_{[0,T-1]}))\\
&-\norm{M_{A}^{(k)}\tilde{D}_A^{(k)}\mathcal{H}_{k+1}(w_{[0,T-1]})}.
\end{split}
\end{equation}
Based on Lemma~\ref{lem:horn(d)}, the smallest singular value of the left-hand side of \eqref{eq:lem:open_design_noisy_pf-4} satisfies
\begin{equation}
\label{eq:lem:open_design_noisy_pf-4_LHS}
\sigma_{n+m}\left(\begin{bmatrix}
\mathcal{H}_{1}(x_{[0,T-1]}) \\ \mathcal{H}_{1}(u_{[0,T-1]})
\end{bmatrix}\sum_{i=0}^{k}d_iQ_i\right)\leq \sigma_\mathcal{D}\bigg\|\sum_{i=0}^{k}d_iQ_i\bigg\|.
\end{equation}
Now, \eqref{eq:lem:open_design_noisy_pf-4}, \eqref{eq:lem:open_design_noisy_pf-4_RHS}, and \eqref{eq:lem:open_design_noisy_pf-4_LHS} imply that
\begin{equation}
\label{eq:th:ctrl_nec_noisy-2}
\begin{split}
\sigma_\mathcal{D}\bigg\|\sum_{i=0}^{k}d_iQ_i\bigg\|\geq\sigma_{n+m}(M_{A,B}^{(k)}D_A^{(k)}\mathcal{H}_{k+1}(u_{[0,T-1]}))\\
-\norm{M_{A}^{(k)}\tilde{D}_A^{(k)}\mathcal{H}_{k+1}(w_{[0,T-1]})}.
\end{split}
\end{equation}
Observe that, since $\|Q_i\|=1$ for all $i\in[0,n]$, we have $\|\sum_{i=0}^{k}d_iQ_i\|\leq \sum_{i=0}^{k}|d_i|=\|d_A^{(k)}\|_1$. Also, it follows from the triangle inequality that $\|\tilde{D}_A^{(k)}\|\leq\|d_A^{(k)}\|_1$. Hence, \eqref{eq:th:ctrl_nec_noisy-2} yields
\begin{equation}
\label{eq:th:ctrl_nec_noisy-3}
\begin{split}
\sigma_\mathcal{D}\geq\tfrac{1}{\|d_A^{(k)}\|_1}\sigma_{n+m}(M_{A,B}^{(k)}D_A^{(k)}\mathcal{H}_{k+1}(u_{[0,T-1]})) \\
-\|M_{A}^{(k)}\|\norm{\mathcal{H}_{k+1}(w_{[0,T-1]})}.
\end{split}
\end{equation}
Now, we use the noise model~\eqref{eq:ass1} to find an upper bound for $\norm{\mathcal{H}_{k+1}(w_{[0,T-1]})}$ in terms of $\varepsilon$. Based on \eqref{eq:hankel_generalized} we have
\begin{equation}
\mathcal{H}_{k+1}(w_{[0,T-1]})=(I_{{k+1}}\otimes \mathcal{H}_1(w_{[0,T-1]}))\begin{bmatrix}
Q_0^\top \!&\! \cdots \!&\! Q_{k}^\top
\end{bmatrix}^\top.
\end{equation}
Since $\norm{I_{k+1}\otimes \mathcal{H}_1(w_{[0,T-1]})}=\norm{\mathcal{H}_1(w_{[0,T-1]})}\leq\sqrt{T}\varepsilon$ and  $\norm{\begin{bmatrix}
Q_0^\top & \cdots & Q_{k}^\top
\end{bmatrix}}=\sqrt{k+1}$, we have 
\begin{equation}
\norm{\mathcal{H}_{k+1}(w_{[0,T-1]})}\leq \sqrt{T(k+1)}\varepsilon.
\end{equation}
Using this inequality, we reduce \eqref{eq:th:ctrl_nec_noisy-3} to the following:
\begin{equation}
\label{eq:th:ctrl_nec_noisy-4}
\begin{split}
\sigma_\mathcal{D}\geq\tfrac{1}{\|d_A^{(k)}\|_1}\sigma_{n+m}(M_{A,B}^{(k)}D_A^{(k)}\mathcal{H}_{k+1}(u_{[0,T-1]})) \\
-\|M_{A}^{(k)}\|\sqrt{T(k+1)}\varepsilon.
\end{split}
\end{equation}
Now, since \eqref{eq:th:open_design_noisy_main} holds, \eqref{eq:th:ctrl_nec_noisy-4} implies that $\sigma_\mathcal{D}\geq \frac{\varepsilon\sqrt{T}}{\rho}$. Therefore, it follows from Lemma~\ref{lem:noisy_rad_lower(a)} that $\rad\Sigma_\mathcal{D}\leq \rho$.  \hfill \QED 

\subsection{Proof of Theorem~\ref{th:open_design_noisy}}

Suppose that $u_{[0,T-1]}$ satisfies \eqref{eq:th:open_design_noisy_main} for all members of $\Sigma_\text{pk}$. Let $(A,B)\in\Sigma_\text{pk}$ and $\mathcal{D}\in\mathfrak{B}(A,B,u_{[0,T-1]})$. It follows from Lemma~\ref{lem:open_design_noisy} that $\rad\Sigma_\mathcal{D}\leq\rho$. Since \mbox{$\rad(\Sigma_\mathcal{D}\cap\Sigma_\text{pk})\leq \rad\Sigma_\mathcal{D}$}, we have that $\mathcal{D}$ is $\Sigma_\text{pk}$--informative for $\rho$--accuracy identification. This argument holds for all $(A,B)\in\Sigma_\text{pk}$. Therefore, $u_{[0,T-1]}$ is $\Sigma_\text{pk}$--universal for $\rho$--accuracy identification. \hfill \QED

\subsection{Proof of Theorem~\ref{th:open_enabling_noisy}}
\label{app:I}

For this proof, we need some additional singular value inequalities, summarized in the following lemma.

\begin{lemma}[{\cite[Lem. 4.4 \& 4.5]{grcar2010matrix}}]
\label{lem:grcar}
Let $M\in\mathbb{R}^{p\times q}$ and $N\in\mathbb{R}^{q\times r}$ such that $MN\neq 0$. The following statements hold:
\begin{enumerate}[label=(\alph*),ref=\ref{lem:grcar}(\alph*)]
    \item\label{lem:grcar(a)} If either $M$ has full column rank or $N$ has full row rank, then $\sigma_*(M)\sigma_*(N)\leq \sigma_*(MN)$.
    \item\label{lem:grcar(c)} If $N$ has full row rank, then \mbox{$\sigma_*(N)\sigma_1(M)\leq\sigma_1(MN)$}.
\end{enumerate}
\end{lemma}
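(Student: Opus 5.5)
The statement is purely linear-algebraic, so I would prove it directly from variational characterizations of singular values rather than by invoking anything earlier in the paper. Throughout I would use two facts. First, if $N\in\mathbb{R}^{q\times r}$ has full row rank, then $NN^\top\geq\sigma_*(N)^2 I_q$ with $\sigma_*(N)=\sigma_q(N)>0$, so $\|N^\top v\|\geq\sigma_*(N)\|v\|$ for all $v\in\mathbb{R}^q$. Second, for any nonzero matrix $X$,
\begin{equation}
\sigma_*(X)=\min\set{\|X^\top z\|}{z\in\operatorname{im}X,\ \|z\|=1}.
\end{equation}
This holds because $X^\top$ restricted to $\operatorname{im}X=(\ker X^\top)^\perp$ is injective, and its singular values there are exactly the positive singular values of $X$. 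I would also note that $\sigma_*(X)=\sigma_*(X^\top)$. The hypothesis $MN\neq 0$ ensures that $\sigma_*(MN)$ is a positive singular value, so the characterization applies.

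For (b), assume $N$ has full row rank. I would take $x$ to be a unit left singular vector of $M$ associated with $\sigma_1(M)$, so that $\|M^\top x\|=\sigma_1(M)$. Then
\begin{equation}
\sigma_1(MN)\geq\|N^\top M^\top x\|\geq\sigma_*(N)\|M^\top x\|=\sigma_*(N)\sigma_1(M),
\end{equation}
which proves the claim.

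For (a), I would first treat the case where $N$ has full row rank. Then $\operatorname{im}(MN)=\operatorname{im}M$, since $N$ is onto. Take any unit $z\in\operatorname{im}(MN)=\operatorname{im}M$. The first fact gives $\|(MN)^\top z\|=\|N^\top M^\top z\|\geq\sigma_*(N)\|M^\top z\|$. Because $z\in\operatorname{im}M$ and $M\neq0$ (which follows from $MN\neq0$), the characterization applied to $X=M$ gives $\|M^\top z\|\geq\sigma_*(M)$. Minimizing over $z$ and applying the characterization to $X=MN$ yields $\sigma_*(MN)\geq\sigma_*(M)\sigma_*(N)$.

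The case where $M$ has full column rank reduces to the previous one by transposition. Here $(MN)^\top=N^\top M^\top$ and $M^\top$ has full row rank, so the argument above gives $\sigma_*(N^\top M^\top)\geq\sigma_*(N^\top)\sigma_*(M^\top)$. Transpose-invariance of $\sigma_*$ then gives the claim.

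The only delicate point is justifying the restricted-minimum characterization of $\sigma_*$. I would do this through the SVD: writing $X=U_1\Sigma_1V_1^\top$ in compact form, one has $\operatorname{im}X=\operatorname{im}U_1$ and $\|X^\top U_1 a\|=\|\Sigma_1 a\|\geq\sigma_*(X)\|a\|$ for every $a$. Everything else is a two-line estimate.
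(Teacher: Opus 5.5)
Your proof is correct. The paper does not prove this lemma at all: it imports both inequalities from Grcar's paper on matrix lower bounds, where $\sigma_*$ is the spectral-norm instance of a more general lower-bound functional. So there is no in-paper argument to match.

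Your route is a self-contained spectral-norm proof built from three elementary facts:
\begin{itemize}
\item the compact-SVD characterization $\sigma_*(X)=\min\{\|X^\top z\| : z\in\operatorname{im}X,\ \|z\|=1\}$;
\item the coercivity bound $\|N^\top v\|\geq\sigma_*(N)\|v\|$ for full-row-rank $N$;
\item the identity $\operatorname{im}(MN)=\operatorname{im}M$ when $N$ is onto.
\end{itemize}
The full-column-rank case of (a) then follows by transposing.

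What this buys over the citation is transparency about exactly where the rank hypotheses enter. Surjectivity of $N$ gives the range identity, and injectivity of $N^\top$ gives the coercivity bound. These hypotheses are genuinely needed: $M=\begin{bmatrix}1 & 0\end{bmatrix}$ and $N=\begin{bmatrix}c & 1\end{bmatrix}^\top$ with $c>0$ give $\sigma_*(M)\sigma_*(N)=\sqrt{1+c^2}>c=\sigma_*(MN)$. Your argument also makes visible that (b) does not use the hypothesis $MN\neq 0$.

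The citation, in turn, is shorter and places the inequalities in a broader theory. However, it leaves the reader to translate Grcar's notation into the paper's $\sigma_*$ and $\sigma_1$.
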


\textit{Proof of Theorem~\ref{th:open_enabling_noisy}:} (a) For the ``if'' part, suppose that $\gamma_\text{pk}>0$. Define $d_\text{pk}^{(k)}$ and $\mu_\text{pk}^{(k)}$ as follows:
\begin{equation}
d_\text{pk}^{(k)}\coloneqq \sup_{(A,B)\in\Sigma_\text{pk}}\tfrac{\|d_A^{(k)}\|_1}{\sigma_{m(k+1)}(D^{(k)}_A)},\  \mu_\text{pk}^{(k)}\coloneqq \sup_{(A,B)\in\Sigma_\text{pk}}\|M_A^{(k)}\|.
\end{equation}
We recall that $D^{(k)}_A$ is invertible by definition for all \mbox{$A\in\mathbb{R}^{n\times n}$}. This, combined with the fact that $\Sigma_\text{pk}$ is bounded, can be used to show that both $d_\text{pk}^{(k)}$ and $\mu_\text{pk}^{(k)}$ are finite and positive. Now, take input $u_{[0,T]}$ to be persistently exciting of order $k+1$ with a sufficiently large magnitude such that
\begin{equation}
\label{eq:choose_input}
\sigma_{m(k+1)}(\mathcal{H}_{k+1}(u_{[0,T-1]}))\geq \varepsilon\sqrt{T}\tfrac{d_\text{pk}^{(k)}}{\gamma_\text{pk}}\left(\mu_\text{pk}^{(k)}\sqrt{k+1}+\tfrac{1}{\rho}\right).
\end{equation}
Let $(A,B)\in\Sigma_\text{pk}$. Note that $\mathcal{H}_{k+1}(u_{[0,T-1]})$ and $M^{(k)}_{A,B}$ have full row rank and $D_{A}^{(k)}$ is invertible. As such, $M^{(k)}_{A,B}D_{A}^{(k)}\mathcal{H}_{k+1}(u_{[0,T-1]})$ has full row rank, and thus, is nonzero. We apply Lemma~\ref{lem:grcar(a)} twice to this matrix to obtain
\begin{equation}
\begin{split}
&\sigma_{n+m}(M^{(k)}_{A,B}D_{A}^{(k)}\mathcal{H}_{k+1}(u_{[0,T-1]}))\\
&\geq\sigma_{n+m}(M^{(k)}_{A,B} D_{A}^{(k)})\sigma_{m(k+1)}(\mathcal{H}_{k+1}(u_{[0,T-1]})) \\
&\geq\sigma_{n+m}(M^{(k)}_{A,B} )\sigma_{m(k+1)}(D_{A}^{(k)})\sigma_{m(k+1)}(\mathcal{H}_{k+1}(u_{[0,T-1]})) \\
&\geq\gamma_\text{pk}\sigma_{m(k+1)}(D_{A}^{(k)})\sigma_{m(k+1)}(\mathcal{H}_{k+1}(u_{[0,T-1]})).
\end{split}
\end{equation}
This, together with \eqref{eq:choose_input}, implies that
\begin{equation}
\label{eq:LHS_lowerbound}
\begin{split}
&\sigma_{n+m}(M^{(k)}_{A,B}D_{A}^{(k)}\mathcal{H}_{k+1}(u_{[0,T-1]}))\\
&\geq\sigma_{m(k+1)}(D_{A}^{(k)})\varepsilon\sqrt{T}d_\text{pk}^{(k)}\left(\mu_\text{pk}^{(k)}\sqrt{k+1}+\tfrac{1}{\rho}\right)\\
&\geq\varepsilon\sqrt{T}\|d_A^{(k)}\|_1\left(\mu_\text{pk}^{(k)}\sqrt{k+1}+\tfrac{1}{\rho}\right),
\end{split}
\end{equation}
where the last inequality follows from the fact that \mbox{$d_\text{pk}^{(k)}\geq \|d_A^{(k)}\|_1/\sigma_{m(k+1)}(D_{A}^{(k)})$}. Now, since $\|M_{A}^{(k)}\|\leq\mu_\text{pk}^{(k)}$, we have
\begin{equation}
\label{eq:RHS_upperbound}
\begin{split}
&\varepsilon\sqrt{T}\|d_A^{(k)}\|_1\left(\|M_{A}^{(k)}\|\sqrt{k+1}+\tfrac{1}{\rho}\right)\\
&\leq\varepsilon\sqrt{T}\|d_A^{(k)}\|_1\left(\mu_\text{pk}^{(k)}\sqrt{k+1}+\tfrac{1}{\rho}\right).
\end{split}
\end{equation}
Therefore, \eqref{eq:LHS_lowerbound} and \eqref{eq:RHS_upperbound} imply \eqref{eq:th:open_design_noisy_main}. Since the choice of $(A,B)\in\Sigma_\text{pk}$ was arbitrary, \eqref{eq:th:open_design_noisy_main} holds for all $(A,B)\in\Sigma_\text{pk}$. 

To prove the ``only if'' part, suppose that $\gamma_\text{pk}=0$. Let $T\in\mathbb{N}$ and $u_{[0,T-1]}\in\mathbb{R}^{mT}$. We will show that there exists \mbox{$(A,B)\in\Sigma_\text{pk}$} such that \eqref{eq:th:open_design_noisy_main} does not hold. Define \mbox{$\Delta_\text{pk}^{(k)}\coloneqq \sup_{(A,B)\in\Sigma_\text{pk}}\|D_A^{(k)}\|$}. It follows from the boundedness of $\Sigma_\text{pk}$ that $\Delta_\text{pk}^{(k)}$ is finite and positive. Since \mbox{$\gamma_\text{pk}=0$}, for every $\delta>0$ there exists $(A,B)\in\Sigma_\text{pk}$ such that $\sigma_{n+m}(M_{A,B}^{(k)})<\delta$. Let $(A,B)\in\Sigma_\text{pk}$ be such that
\begin{equation}
\label{eq:small_cont}
\sigma_{n+m}(M_{A,B}^{(k)})<\tfrac{\varepsilon\sqrt{T}(\sqrt{k+1}+\frac{1}{\rho})}{\Delta_\text{pk}^{\!(k)}\|\mathcal{H}_{k+1}(u_{[0,T-1]})\|}.
\end{equation}
This, together with Lemma~\ref{lem:horn(d)}, implies that the left-hand side of \eqref{eq:th:open_design_noisy_main} satisfies
\begin{equation}
\begin{split}
&\sigma_{n+m}(M^{(k)}_{A,B}D_{\!A}^{(k)}\mathcal{H}_{k+1}(u_{[0,T-1]}))\\
&\leq\sigma_{n+m}(M^{(k)}_{A,B})\|D_{\!A}^{(k)}\|\|\mathcal{H}_{k+1}(u_{[0,T-1]})\| \\
\label{eq:lhs_upper}
&< \varepsilon\sqrt{T}(\sqrt{k+1}+\tfrac{1}{\rho}).
\end{split}
\end{equation}
Note that, by definition, the last entry of $d_A^{(k)}$ is \mbox{$d_k=1$}. Therefore, $\|d_A^{(k)}\|_1\geq 1$. Moreover, $M^{(k)}_{A,B}$ is anti-block diagonal with one of the blocks equal to the identity matrix. This implies that $\|M^{(k)}_{A,B}\|\geq 1$. Thus, we observe that the right-hand side of \eqref{eq:th:open_design_noisy_main} admits the following lower bound:
\begin{equation}
\label{eq:rhs_lower}
\begin{split}
\varepsilon\sqrt{T}\|d_A^{(k)}\|_1(\|M_{A}^{(k)}\|\sqrt{k+1}+\tfrac{1}{\rho})
\geq \varepsilon\sqrt{T}(\sqrt{k+1}+\tfrac{1}{\rho}).
\end{split}
\end{equation}
Now, this and \eqref{eq:lhs_upper} imply that \eqref{eq:th:open_design_noisy_main} does not hold. 

(b) Suppose that $\gamma_\text{pk}>0$. It follows from part (a) that there exists $u_{[0,T]}$ such that \eqref{eq:th:open_design_noisy_main} holds for all $(A,B)\in\Sigma_\text{pk}$. Now, in view of Theorem~\ref{th:open_design_noisy}, $u_{[0,T]}$ is $\Sigma_\text{pk}$--universal for $\rho$--accuracy identification. \hfill \QED

\subsection{Proof of Theorem~\ref{th:helping_in_exp}}

Let $\Sigma_\text{pk}=\Sigma_\text{pk}^\prime\cup\Sigma_\text{pk}^{\prime\prime}$. Suppose that $(A,B)\in\Sigma_\text{pk}^\prime$. Then, for every \mbox{$\mathcal{D}\in\mathfrak{B}(A,B,u_{[0,T-1]})$} we have $\rad\Sigma_\mathcal{D}\leq\rho$. This implies that $\rad(\Sigma_\mathcal{D}\cap\Sigma_\text{pk})\leq\rho$. Now, suppose that \mbox{$(A,B)\in\Sigma_\text{pk}^{\prime\prime}$}. Let $\mathcal{D}\in\mathfrak{B}(A,B,u_{[0,T-1]})$. Aiming for a contradiction, assume that $\rad(\Sigma_\mathcal{D}\cap\Sigma_\text{pk})>\rho$. Since $\rad\Sigma_\text{pk}^{\prime\prime}\leq \rho$, this implies that $\Sigma_\mathcal{D}\cap\Sigma_\text{pk}^\prime$ is nonempty. Therefore, there exists $(\hat{A},\hat{B})\in\Sigma_\text{pk}^\prime$ such that $\mathcal{D}\in\mathfrak{B}_T(\hat{A},\hat{B})$. This implies that $\rad \Sigma_\mathcal{D}\leq \rho$. This contradicts $\rad(\Sigma_\mathcal{D}\cap\Sigma_\text{pk})>\rho$. Therefore, we have $\rad(\Sigma_\mathcal{D}\cap\Sigma_\text{pk})\leq\rho$. \hfill \QED

\subsection{Proof of Proposition~\ref{prop:impossible_noisy}}

For $(A,B)\in\Sigma$ and $r>0$, let $\mathcal{B}_r(A,B)$ denote the set of all matrix pairs $(M,N)$ such that $\norm{\begin{bmatrix}
M-A & N-B
\end{bmatrix}}\leq r$. 

To prove the statement, we use contraposition. Suppose that $\Sigma_\text{pk}$ is not uniformly discrete. We will show that $\Sigma_\text{pk}$ does not enable universal experiment design for identification. Let $T\in\mathbb{N}$ and \mbox{$u_{[0,T-1]}\in\mathbb{R}^{mT}$}. Since $\Sigma_\text{pk}$ is not uniformly discrete, there exists $(A,B)\in\Sigma_\text{pk}$ such that for every \mbox{$r>0$} we have that $\mathcal{B}_r(A,B)\cap\Sigma_\text{pk}$ is not a singleton. Let \mbox{$\mathcal{D}=(u_{[0,T-1]},x_{[0,T]})\in\mathfrak{B}(A,B,u_{[0,T-1]})$} and the noise signal $w_{[0,T-1]}=0$ satisfy \eqref{eq:1} for all $t\in[0,T-1]$. Recall the definition of $N_\text{in}$ from \eqref{eq:N_in} and consider its partitioned form $N_\text{in}=\begin{bmatrix}
N_{11} & N_{12} \\
N_{21} & N_{22}
\end{bmatrix}$, where $N_{11}\in\mathbb{S}^n$, \mbox{$N_{22}\in\mathbb{S}^{n+m}$}, and \mbox{$N_{12}=N_{21}^\top\in\mathbb{R}^{n\times (n+m)}$}. One can take the same steps as in the proof of Lemma~\ref{lem:noisy_rad_lower} to show that $N|N_{22}=\varepsilon^2 I$. Now, it follows from \cite[Thm. 3.2(c)]{HenkQMI2023} that $\mathcal{Z}_{n+m}(N_\text{in})$ has a nonempty interior. We note that \mbox{$\begin{bmatrix}
A & B
\end{bmatrix}^\top\in\mathcal{Z}_{n+m}(N_\text{in})$}. Thus, in view of Lemma~\ref{lem:exp_qmi}, there exists $r>0$ such that $\mathcal{B}_r(A,B)\subseteq\Sigma_\mathcal{D}$. Since $\mathcal{B}_r(A,B)\cap\Sigma_\text{pk}$ is not a singleton, $\Sigma_\mathcal{D}\cap\Sigma_\text{pk}$ is also not a singleton. Therefore, $\mathcal{D}$ is not $\Sigma_\text{pk}$--informative for identification. Since this argument holds for all $T\in\mathbb{N}$ and $u_{[0,T-1]}\in\mathbb{R}^{mT}$, the set $\Sigma_\text{pk}$ does not enable universal experiment design for identification. \hfill \QED

\subsection{Proof of Proposition~\ref{prop:data_best_noisy}}

Suppose that \eqref{eq:rank_data_generalized_noisy} holds for all nonzero \mbox{$(F,G)\in\Sigma_\text{pk}-\Sigma_\text{pk}$}. We show that $\Sigma_\mathcal{D}\cap\Sigma_\text{pk}$ is a singleton. For this, let $(A_1,B_1),(A_2,B_2)\in\Sigma_\text{pk}\cap\Sigma_\mathcal{D}$. Let \mbox{$w^i(0),\ldots,w^i(T-1)\in\mathbb{R}^{n}$}, for $i=1,2$, be two noise sequences such that $\|w^i(t)\|\leq\varepsilon, \forall t\in[0,T-1]$, and
\begin{equation}
\label{eq:rank_data_generalized_noisy-1}
\mathcal{H}_1(\mathcal{H}_1(x_{[1,T]}))=\begin{bmatrix}
A_i & B_i
\end{bmatrix}\begin{bmatrix}
\mathcal{H}_1(x_{[0,T-1]}) \\
\mathcal{H}_1(u_{[0,T-1]})
\end{bmatrix}+\mathcal{H}_1(w^i_{[0,T-1]}) 
\end{equation}
for $i=1,2$. Let $\bar{F}=A_1-A_2$ and $\bar{G}=B_1-B_2$, and note that we have $(\bar{F},\bar{G})\in\Sigma_\text{pk}-\Sigma_\text{pk}$. Due to \eqref{eq:rank_data_generalized_noisy-1} we have
\begin{equation}
\begin{bmatrix}
\bar{F} & \bar{G}
\end{bmatrix}\begin{bmatrix}
\mathcal{H}_1(x_{[0,T-1]}) \\
\mathcal{H}_1(u_{[0,T-1]})
\end{bmatrix}=\mathcal{H}_1(w^2_{[0,T-1]})-\mathcal{H}_1(w^1_{[0,T-1]}).
\end{equation}
We observe that $\|\mathcal{H}_1(w^i_{[0,T-1]})\|^2\leq\sum_{t=0}^{T-1} \|w^i(t)\|^2\leq T\varepsilon^2$
for $i=1,2$. Therefore, we have
\begin{equation}
\norm{\begin{bmatrix}
\bar{F} & \bar{G}
\end{bmatrix}\begin{bmatrix}
\mathcal{H}_1(x_{[0,T-1]}) \\
\mathcal{H}_1(u_{[0,T-1]})
\end{bmatrix}}\leq 2\sqrt{T}\varepsilon.
\end{equation}
Since \eqref{eq:rank_data_generalized_noisy} holds for all nonzero $(F,G)\in\Sigma_\text{pk}-\Sigma_\text{pk}$, we have
 $\norm{\begin{bmatrix}
\bar{F} & \bar{G}
\end{bmatrix}}=0$. This implies that $A_1=A_2$ and $B_1=B_2$. Since this argument holds for any two elements of $\Sigma_\mathcal{D}\cap\Sigma_\text{pk}$, we have that $\Sigma_\mathcal{D}\cap\Sigma_\text{pk}$ is a singleton. \hfill \QED

\subsection{Proof of Theorem~\ref{th:GM_noisy_generalized}}

Suppose that $u_{[0,T-1]}$ satisfies~\eqref{eq:th:GM_noisy_generalized-1} for all $(A,B)\in\Sigma_\text{pk}$ and all nonzero $(F,G)\in\Sigma_\text{pk}-\Sigma_\text{pk}$. We show that $u_{[0,T-1]}$ is $\Sigma_\textup{pk}$--universal for identification. For this, let $(A,B)\in\Sigma_\text{pk}$ and \mbox{$\mathcal{D}=(u_{[0,T-1]},x_{[0,T]})\in\mathfrak{B}(A,B,u_{[0,T-1]})$}. Let $w_{[0,T-1]}$ be the noise signal satisfying 
\begin{equation}
\mathcal{H}_1(x_{[1,T]})\!=\!A\mathcal{H}_1(x_{[0,T-1]})+B\mathcal{H}_1(u_{[0,T-1]})+\mathcal{H}_1(w_{[0,T-1]}).
\end{equation}
Let $(F,G)\in\Sigma_\text{pk}-\Sigma_\text{pk}$ be nonzero. According to~\eqref{eq:GM_formula} and the fact that $\|\sum_{i=0}^{k}d_iQ_i\|\leq\|d_A^{(k)}\|_1$, we have
\begin{equation}
\label{eq:th:GM_noisy_generalized-2}
\begin{split}
&\norm{\begin{bmatrix}
F & G
\end{bmatrix}\begin{bmatrix}
\mathcal{H}_{1}(x_{[0,T-1]}) \\ \mathcal{H}_{1}(u_{[0,T-1]})
\end{bmatrix}}\|d_A\|_1\\
&\geq\norm{\begin{bmatrix}
F & G
\end{bmatrix}M_{A,B}D_A\mathcal{H}_{n+1}(u_{[0,T-1]})
}\\
&-\norm{\begin{bmatrix}
F & 0
\end{bmatrix}M_{A,I}D_A\mathcal{H}_{n+1}(w_{[0,T-1]})}.
\end{split}
\end{equation}
We recall that the noise model \eqref{eq:ass1} implies $\norm{\mathcal{H}_{n+1}(w_{[0,T-1]})}\leq \sqrt{T(n+1)}\varepsilon$. Using this, together with $\norm{D_A}\leq\norm{d_A}_1$, we reduce \eqref{eq:th:GM_noisy_generalized-2} to the following:
\begin{equation}
\label{eq:th:GM_noisy_generalized-3}
\begin{split}
&\norm{\begin{bmatrix}
F & G
\end{bmatrix}\begin{bmatrix}
\mathcal{H}_{1}(x_{[0,T-1]}) \\ \mathcal{H}_{1}(u_{[0,T-1]})
\end{bmatrix}}\\
&\geq\tfrac{1}{\norm{d_A}_1}\norm{\begin{bmatrix}
F & G
\end{bmatrix}M_{A,B}D_A\mathcal{H}_{n+1}(u_{[0,T-1]})
}\\
&-\varepsilon\sqrt{T(n+1)}\norm{F}\norm{M_{A,I}}.
\end{split}
\end{equation}
Now, since \eqref{eq:th:GM_noisy_generalized-1} holds, \eqref{eq:th:GM_noisy_generalized-3} implies that
\begin{equation}
\norm{\begin{bmatrix}
F & G
\end{bmatrix}\begin{bmatrix}
\mathcal{H}_1(x_{[0,T-1]}) \\
\mathcal{H}_1(u_{[0,T-1]})
\end{bmatrix}}> 2\sqrt{T}\varepsilon.
\end{equation}
Therefore, since this holds for all nonzero $(F,G)\in\Sigma_\text{pk}-\Sigma_\text{pk}$, it follows from Proposition~\ref{prop:data_best_noisy} that $\mathcal{D}$ is \mbox{$\Sigma_\text{pk}$--informative} for identification. Since this argument holds for all $(A,B)\in\Sigma_\text{pk}$, input $u_{[0,T-1]}$ is $\Sigma_\textup{pk}$--universal for identification.\hfill \QED